\numberwithin{equation}{section}
\newtheorem{theorem}{Theorem}[section]
\newtheorem*{theoremintro}{Theorem}
\newtheorem{proposition}[theorem]{Proposition}
\newtheorem{lemma}[theorem]{Lemma}
\newtheorem{corollary}[theorem]{Corollary}
\theoremstyle{definition}
\newtheorem{definition}[theorem]{Definition}
\theoremstyle{remark}
\newtheorem*{remark}{Remark}
\newcommand{\C}{\mathbb{C}}
\newcommand{\QG}{\mathbb{G}}
\newcommand{\QH}{\mathbb{H}}
\newcommand{\Pol}{\textup{Pol}}
\newcommand{\R}{\mathbb{R}}
\newcommand{\Z}{\mathbb{Z}}
\newcommand{\N}{\mathbb{N}}
\begin{document}

\title{Gaussian generating functionals on easy quantum groups}

\author{Uwe Franz}
\address{U.F., D\'epartement de math\'ematiques de Besan\c{c}on,
Universit\'e de Franche-Comt\'e, 16, route de Gray, 25 030
Besan\c{c}on cedex, France}
\email{uwe.franz@univ-fcomte.fr}
\urladdr{http://lmb.univ-fcomte.fr/uwe-franz}

\author{Amaury Freslon}
\address{Universit\'e Paris-Saclay, CNRS, Laboratoire de Math\'ematiques d'Orsay, 91405 Orsay, France}
\email{amaury.freslon@universite-paris-saclay.fr}

\author{Adam Skalski}
\address{Institute of Mathematics of the Polish Academy of Sciences, ul.~\'Sniadeckich 8, 00--656 Warszawa, Poland}
\email{a.skalski@impan.pl}

\thanks{A.S.\ was partially supported by the  National Science Centre
(NCN) grant no. 2020/39/I/ST1/01566 and by a grant from the Simons Foundation. A.S.\ would like to thank the Isaac Newton Institute for Mathematical Sciences, Cambridge, for support and hospitality during the programme Quantum Information, Quantum Groups and Operator Algebras, where work on this paper was completed. This  was supported by EPSRC grant no. EP/R014604/1.
}

\keywords{Gaussian functionals; partition quantum groups; quantum L\'evy processes}
\subjclass[2010]{Primary: 46L53; Secondary 81R50}
\begin{abstract}
We describe all Gaussian generating functionals on several easy quantum groups given by non-crossing partitions. This includes in particular the free unitary, orthogonal and symplectic quantum groups. We further characterize central Gaussian generating functionals and describe a centralization procedure yielding interesting (non-Gaussian) generating functionals.
\end{abstract}
\maketitle


\section{Introduction}
\label{sec:intro}

The study of stochastic processes on (locally) compact groups naturally focuses primarily on the most natural class, that  of \emph{L\'evy processes}, i.e.\ stationary processes with independent, identically distributed increments. These can be naturally described via the associated convolution semigroups of probability measures, and further via their stochastic generators, which can be fully classified via the L\'evy-Khintchin formula in its various incarnations (\cite{Heyer}, \cite{liao}).

The arrival of quantum probability, and parallel developments related to quantum group theory, led in the 1980s to the emergence of the theory of \emph{quantum} L\'evy processes \cite{ASW}. They can be studied from a purely algebraic point of view, as -- similarly to their classical counterparts -- all the stochastic information they carry is contained in the associated \emph{(quantum) convolution semigroup}, which this time is a family of states on the underlying $*$-bialgebra, or in its \emph{generating functional} (\cite{schurmann93}).

Among classical L\'evy processes the most important are the Gaussian processes. The desire to understand their quantum equivalents led Sch\"urmann to introduce the notion of a  \emph{Gaussian generating functional}. These can be in fact defined and studied on any \emph{augmented algebra} (i.e.\ a complex unital $*$-algebra $A$ equipped with a character $\varepsilon : A \to \C$). We will however mostly focus on the original context of the algebras associated with compact quantum groups in the sense of Woronowicz, denoted below $\Pol(\QG)$, and further simply speak of Gaussian generating functionals on $\QG$, where $\QG$ is a given compact quantum group. The study of Gaussian generating functionals has many interesting connections to general quantum stochastic considerations (\cite{dg12}), to L\'evy-Khintchin decompositions and  cohomology questions  (\cite{dfks}) or to determining \emph{Gaussian parts} of certain quantum groups (\cite{FFS23}). In this article we will focus on the classification problems for Gaussian functionals on certain \emph{free/universal} quantum groups (\cite{banicaspeicher2009}). The main result of our work can be summarised as the following statement.

\begin{theoremintro}
Every Gaussian generating functional $\phi$ on the free unitary quantum group $U_{N}^{+}$ admits a unique decomposition into the sum of a  ``drift'' part $D_{H}$ determined by an anti-hermitian matrix $H\in M_{N}(\C)$ and a ``diffusion'' part $\Gamma_{W}$ determined by a matrix $W \in M_{N}(\C)\otimes M_{N}(\C)$ satisfying a certain positivity condition. Conversely, every pair $H,W$ as above leads to a Gaussian generating functional $\phi = D_{H} +\Gamma_{W}$.
\end{theoremintro}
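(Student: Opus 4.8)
The plan is to exploit the equivalence between Gaussian generating functionals and Sch\"urmann triples whose representation component is trivial. Concretely, a hermitian, conditionally positive functional $\phi$ on $\Pol(U_N^+)$ with $\phi(1)=0$ is Gaussian precisely when its associated triple has the form $(\varepsilon(\cdot)\,\mathrm{id}_D, \eta, \phi)$, where $\eta:\Pol(U_N^+)\to D$ is a cocycle for the counit representation, i.e.\ $\eta(ab) = \varepsilon(a)\eta(b) + \eta(a)\varepsilon(b)$, and $\phi$ obeys the second-order relation $\phi(ab) = \varepsilon(a)\phi(b) + \phi(a)\varepsilon(b) + \langle\eta(a^*),\eta(b)\rangle$. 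First I would record that in this situation $\eta$ vanishes on $\C 1$ and on $(\ker\varepsilon)^2$, and that hermitianity of $\phi$ together with conditional positivity (since $\phi(a^*a) = \|\eta(a)\|^2\ge 0$ for $a\in\ker\varepsilon$) become automatic consequences of the inner-product structure; thus the classification reduces to describing the admissible pairs $(\eta,\phi)$.

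The next step is to compute the first-order data on $U_N^+$. Since $\eta$ factors through the cotangent space $\ker\varepsilon/(\ker\varepsilon)^2$, I would linearise the four defining relations ($u$ and $\bar u$ unitary) modulo $(\ker\varepsilon)^2$. Each of them collapses to the single identity $\eta(u_{ij}^*) = -\eta(u_{ji})$, so $\eta$ is completely determined by a free choice of vectors $\xi_{ij} := \eta(u_{ij})\in D$, and the diffusion is encoded by the Gram matrix $W = (\langle\xi_{ij},\xi_{kl}\rangle)_{ij,kl}\in M_N(\C)\otimes M_N(\C)$, which is automatically positive. The remaining, drift, data are the scalars $G_{ij} := \phi(u_{ij})$.

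The heart of the argument, and the step I expect to be the main obstacle, is to determine exactly which pairs $(W, G)$ extend to a \emph{well-defined} functional on the quotient algebra $\Pol(U_N^+)$. Using the cocycle and second-order identities one reduces this to checking the defining relations themselves: vanishing of $\eta$ and $\phi$ on a $*$-closed generating set of relations suffices, since both $\eta(xry)$ and $\phi(xry)$ collapse to $0$ once $\eta(r)=\phi(r)=0=\varepsilon(r)$. Carrying out these checks, applying $\phi$ to the unitarity relations of $u$ forces the hermitian part of $G$ to equal a specific (negative) partial trace of $W$, while applying it to the unitarity relations of $\bar u$ yields a second such identity; comparing the two produces the genuine constraint on $W$, namely a partial-trace compatibility of the shape $\mathrm{Tr}_2(W) = \mathrm{Tr}_1(W)^{t}$ (this, together with $W\ge 0$, is the ``certain positivity condition''), and shows that only the anti-hermitian part $H := (G - G^*)/2$ of the drift is free, with hermitianity of $\phi$ forcing $H^* = -H$. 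A small example (a single non-zero $\xi_{ij}$) already violates consistency, confirming that positivity of $W$ alone is insufficient and that this compatibility condition is essential rather than cosmetic.

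Finally I would assemble the decomposition. Define $\Gamma_W$ to be the Gaussian functional with cocycle $\eta$ determined by $W$ and with drift equal to the forced hermitian part only; by the previous step it is a well-defined generating functional, and $D_H$, the pure drift with trivial cocycle and anti-hermitian $H$, is well-defined and generating as well (its conditional positivity being null on $\ker\varepsilon$). Their sum reproduces $\phi$, and uniqueness follows because $H$ is recovered as the anti-hermitian part of $G$ while $W$ is recovered from the second-order part of $\phi$, these living in complementary pieces of the data. The converse direction simply reverses the construction: any anti-hermitian $H$ and any positive $W$ satisfying the compatibility condition yield, via a Gram (GNS) realisation of the $\xi_{ij}$, a genuine Gaussian generating functional $\phi = D_H + \Gamma_W$.
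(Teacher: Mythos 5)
Your argument is correct and lands on the same classification, but it follows a genuinely more self-contained route than the paper. The paper encodes the cocycle by Kraus-type matrices $(L_r)_{ij}=\langle e_r,\eta(u_{ij})\rangle$ (Lemma \ref{lem:deriv} and Theorem \ref{thm-gauss-UNplus}), imports the existence of the generating functional $\Gamma$ from the forward direction of \cite[Theorem 3.3]{dfks} once the compatibility condition \eqref{eq-L-cond} holds, and then needs a Choi--Kraus argument (Lemma \ref{lem:Choi}) to recognise which tensors $W=\sum_r L_r\otimes L_r^*$ occur --- that is where the positivity condition of Theorem \ref{th:unitaryW} comes from. You instead take the Gram matrix $\bigl(\langle \eta(u_{ij}),\eta(u_{kl})\rangle\bigr)$ as the basic datum, so positivity is manifest; your $W$ is an index-shuffle of the paper's $W$, and your partial-trace constraint $\mathrm{Tr}_2(W)=\mathrm{Tr}_1(W)^{t}$ is exactly condition \eqref{eq-L-cond}, i.e.\ $M(W)=M(\Sigma(W))$ in the paper's notation. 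Moreover, instead of quoting \cite{dfks} you obtain both existence and the constraint from scratch: you construct $(\eta,\phi)$ on the free $*$-algebra and descend to $\Pol(U_N^+)$ by checking the unitarity relations of $U$ and $\overline{U}$; the two expressions for the hermitian part of $(\phi(u_{ij}))$ coming from the $U$-relations and the $\overline{U}$-relations are consistent precisely when the constraint holds, which makes transparent what the paper hides inside the cited theorem. Your descent criterion (that $\varepsilon(r)=\eta(r)=\phi(r)=0$ on a $*$-closed generating set of relations forces $\phi$ to vanish on the whole ideal) is a variant of Lemma \ref{lem-ideal}. The paper's route buys brevity and the conceptual link with completely positive maps; yours buys elementarity and transparency, at the price of the (routine, but left implicit) verification that the prescribed first- and second-order data extend to a Gaussian pair on the free algebra. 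Two harmless imprecisions: hermitianity of $\phi$ is not automatic --- it is the genuine first-order restriction that you later correctly impose as $H^*=-H$; and your counterexample of a single non-zero $\xi_{ij}$ requires $i\neq j$ (a single non-zero diagonal $\xi_{ii}$ is perfectly consistent).
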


The result, which has obvious classical analogues, allows us further to obtain analogous descriptions for free orthogonal groups, for free symplectic group and for certain other classes of easy/partition quantum groups. One should note that the theorem above might be helpful in solving the outstanding problem left open in \cite{FFS23}: do Gaussian processes ``see all of $U_{N}^{+}$''? Or, formally speaking, is $U_{N}^{+}$ its own Gaussian part?  

A particularly interesting class of quantum convolution semigroups (or more generally quantum probability measures) on compact quantum groups is given by \emph{central states} (see for example \cite{cfk} or \cite{FSW}). Motivated by this fact, we describe all central Gaussian generating functionals on the quantum groups listed above. These turn out to be rather limited; for example the free orthogonal group does not admit any central Gaussian processes. We show however how to produce central stochastic generators which are not Gaussian, but from a certain perspective can be viewed as quantum analogues of classical Brownian motions. These have been very recently studied in \cite{Delhaye}, with natural cut-off estimates obtained.

The detailed plan of the paper is as follows: in Section \ref{sec-prelim} we recall the basic notions and background results we need to study Gaussian generating functionals. Section \ref{Sec:unitary} treats the free unitary group and establishes the key theorem mentioned above. There we also discuss the Gaussian generating functionals on the infinite quantum hyperoctahedral group and on the duals of classical free groups. In Section \ref{Sec:orthogonal} we consider free orthogonal group and its symplectic counterpart. Section \ref{sec:central} is devoted to the analysis of central Gaussian functionals. We first show that they appear relatively rarely, and then we describe the centralizing procedure and apply it to Gaussian generators.

\section{Preliminaries}\label{sec-prelim}

In this preliminary section, we recall some basic definitions and facts regarding Gaussian generating functionals. 

The basic object of our study is an augmented unital complex $*$-algebra, i.e.\ a pair  $(B, \varepsilon)$, where $B$ is a unital $*$-algebra and $\varepsilon:B \to \C$  a character (unital $*$-homomorphism). For simplicity we will always call such a pair an \emph{augmented algebra}. A lot of the motivation for our study comes from the examples of augmented algebras given by the  Hopf $*$-algebra $\Pol(\QG)$ of a compact quantum group $\QG$ equipped with the counit, but we will stick for the moment to the general setting. A \emph{generating functional} on $B$ is a linear functional $\phi : B\to \C$ with the following three properties:
\begin{enumerate}
\item 
$\phi(\mathbf{1}) = 0$ (normalization);
\item 
$\phi(b^{*}) = \overline{\phi(b)}$ for all $b\in B$ (hermitianity);
\item
$\phi(b^{*}b)\geqslant 0$ for all $b\in \ker(\varepsilon)$ (conditional positivity).
\end{enumerate}

We are interested in generating functionals, because it follows from \cite[Section 3.2]{schurmann93} that if $B$ happens to be a $*$-bialgebra, such functionals are in one-to-one correspondence with \emph{convolution semigroups of states} (see for example \cite{dfks} for more information on the topic).

We now turn to the notion of Gaussianity for a generating functional. This is expressed through specific ideals of an augmented $*$-algebra $B$ which we now define. Set $K_{1}(B) = \ker(\varepsilon)$ and
\begin{equation*}
K_{n}(B) = \mathrm{Span}\{b_{1}\cdots b_{n} \mid b_{1}, \cdots, b_{n}\in \ker(\varepsilon)\} = K_{1}(B)^{n}.
\end{equation*}

\begin{definition}
A generating functional $\phi: B\to \C$ on an augmented $*$-algebra $B$ is called \emph{Gaussian} (or \emph{quadratic}, \cite[Section 5.1]{schurmann93}), if $\phi_{\mid K_{3}(B)} = 0$. 
\end{definition}

Note that the defining property of Gaussian generating functionals translates into the following condition, valid for all $a, b, c \in B$:
\begin{equation*}
\phi(abc) = \phi(ab) \varepsilon(c) + \phi(ac) \varepsilon(b) + \phi(bc) \varepsilon(a) - \phi(a) \varepsilon(bc) - \phi(b) \varepsilon(ac) - \phi(c) \varepsilon(ab).
\end{equation*}
This gives an inductive algorithm to compute $\phi$, see \cite[Prop 2.7]{FFS23}. Note also that Gaussian generating functionals form a cone inside $B'$. We need two more algebraic definitions.

\begin{definition}
Let $(B, \varepsilon)$ be an augmented $*$-algebra	and let $V$ be a vector space. A linear map $\eta: B \to V$ is called a \emph{Gaussian cocycle} (or an \emph{$\varepsilon$-derivation}) if for all $a, b\in B$,
\begin{equation*}
\eta(ab) = \varepsilon(a)\eta(b) + \eta(a)\varepsilon(b).
\end{equation*}
\end{definition}

\begin{definition}
Given an augmented algebra $(B, \varepsilon)$ and a functional $\psi : B \to \C$ we define the \emph{coboundary} of $\psi$, namely a functional $\partial \psi : B \otimes B \to \C$, by the (linear extension of the) formula
\begin{equation}\label{def:partial}
\partial \psi(a \otimes b)= 	\psi(ab) - \varepsilon(a)\psi(b) - \psi(a)\varepsilon(b),
\end{equation}
for $a, b\in B$.
\end{definition}

A version of the GNS construction, starting from a (Gaussian) generating functional shows the following facts (contained in \cite[Subsection 2.3]{schurmann93} and \cite[Proposition 5.1.1]{schurmann93}).

\begin{theorem} \label{GST}
Let $(B, \varepsilon)$ be an augmented $*$-algebra and let $\phi : B \to \C$ be a linear functional. The following facts are equivalent:
\begin{itemize}
\item[(i)] $\phi$ is a Gaussian generating functional;
\item[(ii)] $\phi$ is hermitian and there exist a pre-Hilbert space $D$ and a Gaussian cocycle $\eta : B \to D$ such that for all $a, b\in B$,
\begin{equation} \label{eq-cobound}
\partial\phi(a^{*}\otimes b) = \langle\eta(a), \eta(b)\rangle.
\end{equation}
\end{itemize}		
\end{theorem}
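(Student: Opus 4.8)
The plan is to prove the two implications separately, treating (ii)$\Rightarrow$(i) as a direct verification of the axioms and (i)$\Rightarrow$(ii) as a Gaussian analogue of the GNS construction. Throughout I will use that, since $\varepsilon$ is a $*$-character, $B$ splits as $B = \C\mathbf{1} \oplus \ker(\varepsilon)$, writing $b^{0} := b - \varepsilon(b)\mathbf{1} \in \ker(\varepsilon)$.

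For (ii)$\Rightarrow$(i), I would check the three defining properties together with Gaussianity. The cocycle identity applied to $\mathbf{1} = \mathbf{1}\cdot\mathbf{1}$ forces $\eta(\mathbf{1}) = 0$, and evaluating \eqref{eq-cobound} at $a = b = \mathbf{1}$ then yields $\phi(\mathbf{1}) = 0$ (normalization); hermitianity is assumed, and for $b \in \ker(\varepsilon)$ the relation $\phi(b^{*}b) = \partial\phi(b^{*}\otimes b) = \langle\eta(b),\eta(b)\rangle \geqslant 0$ gives conditional positivity. The only real computation is the Gaussian identity. Here I would first establish, using the cocycle property and the fact that $\overline{\varepsilon(a^{*})} = \varepsilon(a)$, the key bilinearity relation $\partial\phi(ab\otimes c) = \varepsilon(a)\,\partial\phi(b\otimes c) + \varepsilon(b)\,\partial\phi(a\otimes c)$, obtained by expanding $\eta((ab)^{*}) = \eta(b^{*}a^{*})$ in \eqref{eq-cobound}. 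Substituting this into $\phi(abc) = \partial\phi(ab\otimes c) + \varepsilon(ab)\phi(c) + \phi(ab)\varepsilon(c)$ and expanding each $\partial\phi$ via \eqref{def:partial} collapses, after cancellation of the $\varepsilon(a)\varepsilon(b)\phi(c)$ terms, exactly to the Gaussian identity displayed after the definition, which is equivalent to $\phi_{\mid K_{3}(B)} = 0$.

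For the harder direction (i)$\Rightarrow$(ii), I would build $D$ and $\eta$ by hand. On $\ker(\varepsilon)$ define the sesquilinear form $\langle a, b\rangle := \phi(a^{*}b)$; conditional positivity makes it positive semidefinite, and hermitianity of $\phi$ gives conjugate symmetry, so Cauchy--Schwarz applies and the radical $N := \{a \in \ker(\varepsilon) : \phi(a^{*}a) = 0\}$ is a linear subspace. Then $D := \ker(\varepsilon)/N$, equipped with the induced form, is a pre-Hilbert space; write $\rho : \ker(\varepsilon) \to D$ for the quotient map. I would then set $\eta(b) := \rho(b^{0}) = \rho\big(b - \varepsilon(b)\mathbf{1}\big)$, a linear map $B \to D$ annihilating $\mathbf{1}$.

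The main obstacle is verifying that $\eta$ is a Gaussian cocycle, and this is precisely where Gaussianity is used. A short computation with $b^{0}$ shows $(ab)^{0} - \varepsilon(a)b^{0} - \varepsilon(b)a^{0} = a^{0}b^{0}$, so the cocycle identity reduces to the claim $a^{0}b^{0} \in N$, i.e. $\langle a^{0}b^{0}, c\rangle = \phi\big((a^{0}b^{0})^{*}c\big) = 0$ for all $c \in \ker(\varepsilon)$. Since $a^{0}, b^{0}, c \in \ker(\varepsilon)$, the element $(a^{0}b^{0})^{*}c = b^{0*}a^{0*}c$ lies in $K_{3}(B)$, so $\phi$ vanishes on it by the Gaussian hypothesis; hence $a^{0}b^{0} \in N$ and $\eta$ is a cocycle. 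Finally, the coboundary identity \eqref{eq-cobound} follows by expanding $\langle\eta(a),\eta(b)\rangle = \phi(a^{0*}b^{0})$ through \eqref{def:partial}, the normalization $\phi(\mathbf{1}) = 0$ removing the constant term so that $\phi(a^{0*}b^{0}) = \partial\phi(a^{*}\otimes b)$. Combined with the assumed hermitianity of $\phi$, this completes (ii).
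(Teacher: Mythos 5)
Your proof is correct, and it follows essentially the same route the paper indicates: the paper does not prove Theorem \ref{GST} itself but cites Sch\"urmann, describing the argument as ``a version of the GNS construction,'' which is exactly what you carry out (the form $\phi(a^{*}b)$ on $\ker(\varepsilon)$, quotient by the null space, $\eta(b)=[b-\varepsilon(b)\mathbf{1}]$, with Gaussianity forcing $a^{0}b^{0}$ into the null space). All the key verifications in both directions check out, so your write-up supplies in full the argument the paper leaves to the reference.
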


We can moreover assume that $\eta(B) = D$ (the cocycle $\eta$ is then called surjective). Given two surjective cocycles $\eta : B \to D, \eta' : B \to D'$ such that \eqref{eq-cobound} holds for both $\eta$ and $\eta'$, we have a natural unitary equivalence between $\eta$ and $\eta'$. 

\begin{definition} \label{def:GaussPair}
A pair $(\phi, \eta)$, where $\phi: B \to \C$ is a Gaussian generating functional and $\eta:B \to D$ is a   Gaussian cocycle such that \eqref{eq-cobound} holds, is called a \emph{Gaussian pair}. Given a Gaussian cocycle $\eta:B \to D$ we will say that it admits a Gaussian generating functional if there exists a Gaussian generating functional $\phi: B \to \C$ such that $(\phi, \eta)$ form a Gaussian pair.
\end{definition}

\begin{remark}
A special case of Gaussian pairs is that given by these of the form $(\phi,0)$. These arise from \emph{drifts} $\phi$: hermitian linear functionals which are at the same time $\varepsilon$-derivations. Note that drifts are Gaussian functionals, and given a Gaussian pair $(\phi, \eta)$ and a functional $\psi\in B'$ the pair $(\psi, \eta)$ is Gaussian if and only if $\psi-\phi$ is a drift. 
\end{remark}

The next proposition will be very useful in Section \ref{Sec:orthogonal}.

\begin{proposition} \label{prop:quotient}
Assume that $(B, \varepsilon)$ and $(\tilde{B}, \tilde{\varepsilon})$ are augmented $*$-algebras and that $q : \tilde{B}\to B$ is a morphism of augmented algebras, i.e.\ a unital $*$-homomorphism such that $\varepsilon\circ q = \tilde{\varepsilon}$. Then any Gaussian generating functional $\phi: B \to \C$ induces a Gaussian generating functional $\tilde{\phi} = \phi\circ q$ on $\tilde{B}$. If moreover $q$ is surjective, then there is a natural bijective correspondence between Gaussian pairs on $B$ and those Gaussian pairs on $\tilde{B}$ which vanish on $\ker(q)$.
\end{proposition}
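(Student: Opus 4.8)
The plan is to verify the three defining properties of a generating functional for $\tilde\phi = \phi\circ q$ directly, and then to organise the correspondence in the surjective case through the GNS-type description of Theorem \ref{GST}. First I would check that $\tilde\phi$ is a Gaussian generating functional. Normalization is immediate since $q(\mathbf 1)=\mathbf 1$ and $\phi(\mathbf 1)=0$; hermitianity follows because $q$ is a $*$-homomorphism, so $\tilde\phi(b^*)=\phi(q(b)^*)=\overline{\phi(q(b))}=\overline{\tilde\phi(b)}$. For conditional positivity, observe that $\varepsilon\circ q=\tilde\varepsilon$ forces $q(\ker\tilde\varepsilon)\subseteq\ker\varepsilon$, so for $b\in\ker\tilde\varepsilon$ we have $\tilde\phi(b^*b)=\phi(q(b)^*q(b))\geqslant 0$. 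Gaussianity is equally direct: $q$ is multiplicative and intertwines the counits, hence $q(K_3(\tilde B))\subseteq K_3(B)$, and $\phi_{\mid K_3(B)}=0$ gives $\tilde\phi_{\mid K_3(\tilde B)}=0$.

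Next I would treat the correspondence of Gaussian pairs when $q$ is surjective. In one direction, given a Gaussian pair $(\phi,\eta)$ on $B$ with cocycle $\eta:B\to D$, I would set $\tilde\phi=\phi\circ q$ and $\tilde\eta=\eta\circ q$. Since $q$ is a homomorphism intertwining the counits, $\tilde\eta$ is again a Gaussian cocycle, and composing the coboundary identity \eqref{eq-cobound} with $q$ shows that $(\tilde\phi,\tilde\eta)$ is a Gaussian pair on $\tilde B$; moreover it vanishes on $\ker(q)$ by construction. In the other direction, given a Gaussian pair $(\tilde\phi,\tilde\eta)$ on $\tilde B$ vanishing on $\ker(q)$, surjectivity of $q$ lets me define $\phi$ and $\eta$ on $B$ by $\phi(q(b))=\tilde\phi(b)$ and $\eta(q(b))=\tilde\eta(b)$. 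The hypothesis that the pair vanishes on $\ker(q)$ is exactly what makes these formulas well defined as maps out of $B\cong\tilde B/\ker(q)$.

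The main obstacle, and the point that deserves the most care, is precisely this well-definedness: I must check that $\ker(q)$ lies in the null spaces of both $\tilde\phi$ and $\tilde\eta$ in a way compatible with passing to the quotient. Here ``vanishes on $\ker(q)$'' should be interpreted to mean that $\tilde\phi$ annihilates $\ker(q)$ and that $\tilde\eta(\ker(q))=0$; since $\ker(q)$ is a two-sided $*$-ideal, the induced $\phi$ and $\eta$ on $B$ are automatically compatible with the algebra structure, and one verifies that $\eta$ is a cocycle and that $(\phi,\eta)$ satisfies \eqref{eq-cobound} by lifting elements of $B$ along $q$ and invoking the corresponding identities on $\tilde B$. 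I would finally confirm that the two assignments are mutually inverse, which is immediate from the defining relations $\phi\circ q=\tilde\phi$ and $\eta\circ q=\tilde\eta$, giving the claimed bijection.
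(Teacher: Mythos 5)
Your proposal is correct and takes essentially the same approach as the paper: the paper's entire proof is the single observation that $q(K_{n}(\tilde{B}))\subset K_{n}(B)$ for all $n\in\N$, which is exactly the key fact you use, the remaining verifications (the generating-functional axioms for $\phi\circ q$ and the quotient argument for the bijection) being the routine details the paper leaves implicit. One small remark: your interpretation of ``vanishes on $\ker(q)$'' is harmless, since the vanishing of the cocycle is automatic once $\tilde{\phi}$ vanishes there --- for $a\in\ker(q)\subseteq\ker(\tilde{\varepsilon})$ one has $\|\tilde{\eta}(a)\|^{2}=\partial\tilde{\phi}(a^{*}\otimes a)=\tilde{\phi}(a^{*}a)=0$.
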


\begin{proof}
This follows directly from the fact that $q(K_{n}(\tilde{B}))\subset K_{n}(B)$ for all $n\in \N$.
\end{proof}

To check that a Gaussian generating functional vanishes on a certain ideal determined by a set of relations,  we will rely on the following result proved in \cite[Cor 2.8]{FFS23} in the language of Hopf $*$-algebras of compact quantum groups. Here we will state it in a formally broader context of augmented $*$-algebras, but the proof remains exactly the same. 

\begin{lemma}\label{lem-ideal}
Let $(B, \varepsilon)$ be an augmented $*$-algebra and let $\phi : B\to \C$ be a Gaussian generating functional. Assume that we have  two families $\mathcal{X} = \{a_{1}, \cdots, a_{n}\}\subset B$ and $\mathcal{Y} = \{b_{1}, \cdots, b_{m}\}\subset \ker(\varepsilon)$ such that
\begin{itemize}
\item[(1)] the family $\mathcal{X}$ generates $B$ as an algebra;
\item[(2)] $0 = \phi(b_{k}) = \phi(a_{j}b_{k}) = \phi(b_{k}a_{j})$ for all $j = 1, \cdots, n$, $k = 1, \cdots, m$.
\end{itemize}
Then $\phi$ vanishes on the ideal generated by $\mathcal{Y}$.
\end{lemma}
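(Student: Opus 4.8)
The plan is to prove directly that $\phi(x b_k y) = 0$ for every $x, y \in B$ and every $k$, since the two-sided ideal generated by $\mathcal{Y}$ is precisely the linear span of such elements (allowing $x$ or $y$ to be the unit). The only tool needed is the reformulation of Gaussianity recorded after the definition, namely the identity
\begin{equation*}
\phi(abc) = \phi(ab)\varepsilon(c) + \phi(ac)\varepsilon(b) + \phi(bc)\varepsilon(a) - \phi(a)\varepsilon(bc) - \phi(b)\varepsilon(ac) - \phi(c)\varepsilon(ab),
\end{equation*}
valid for all $a,b,c \in B$, together with the hypotheses in (2) and the observation, coming from (1), that every element of $B$ is a linear combination of the unit and of monomials in the generators $a_1, \dots, a_n$.

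First I would establish the two ``one-sided'' statements: $\phi(x b_k) = 0$ and $\phi(b_k y) = 0$ for all $x, y \in B$ and all $k$. By linearity it suffices to treat the case where $x$ (resp.\ $y$) is a monomial in the generators, and I would argue by induction on its length. For $\phi(x b_k)$, the cases $x = \mathbf{1}$ and $x = a_j$ are exactly the hypotheses $\phi(b_k) = 0$ and $\phi(a_j b_k) = 0$. For the inductive step I write $x = a_j x'$ with $x'$ shorter and specialise the identity above to $a = a_j$, $b = x'$, $c = b_k$; since $\varepsilon(b_k) = 0 = \phi(b_k)$, every term carrying a factor $\varepsilon(b_k)$ or $\phi(b_k)$ drops out and one is left with
\begin{equation*}
\phi(a_j x' b_k) = \phi(a_j b_k)\varepsilon(x') + \phi(x' b_k)\varepsilon(a_j),
\end{equation*}
so both summands vanish, the first by hypothesis and the second by the induction hypothesis. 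The statement $\phi(b_k y) = 0$ is proved symmetrically, this time splitting off the \emph{last} generator of the monomial $y$ and specialising the identity to $a = b_k$, which reduces matters to the hypothesis $\phi(b_k a_j) = 0$.

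With these in hand, the conclusion is immediate: specialising the identity to $a = x$, $b = b_k$, $c = y$ and again discarding all terms with a factor $\varepsilon(b_k)$ or $\phi(b_k)$ yields
\begin{equation*}
\phi(x b_k y) = \phi(x b_k)\varepsilon(y) + \phi(b_k y)\varepsilon(x) = 0
\end{equation*}
for all $x, y \in B$, which is exactly the desired vanishing on the ideal generated by $\mathcal{Y}$.

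I do not expect a genuine obstacle here; the entire content lies in choosing the right induction and carefully bookkeeping the terms that vanish. The only point requiring mild care is to align the base cases of the induction precisely with the three hypotheses in (2): one must peel off a \emph{single} generator at a time, rather than an arbitrary factor, so that the base hypotheses $\phi(a_j b_k) = 0$ and $\phi(b_k a_j) = 0$ are exactly what the reduction formulas call for.
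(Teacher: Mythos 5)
Your proof is correct and complete: the two one-sided inductions (peeling off a single generator on the appropriate side) followed by the final specialisation $a=x$, $b=b_k$, $c=y$ exactly exhaust the ideal generated by $\mathcal{Y}$, and all discarded terms do vanish because $\varepsilon$ is multiplicative and $\varepsilon(b_k)=\phi(b_k)=0$. The paper itself gives no written argument --- it defers to \cite[Cor 2.8]{FFS23}, whose proof is precisely this kind of induction based on the Gaussian identity (the ``inductive algorithm'' of \cite[Prop 2.7]{FFS23} quoted after the definition) --- so your argument is essentially the same as the intended one.
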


Our goal in the sequel is to completely classify Gaussian functionals on augmented algebras associated with concrete compact quantum groups. The specific quantum groups that we will study are called \emph{free easy quantum groups} and were introduced under that name in \cite{banicaspeicher2009}. However, we will not need their general definition, because it will be more convenient to work with a specific description in each case. Moreover, there are many of these for which the problem has already been solved. Indeed, it was proven in \cite[Prop 4.10]{FFS23} that if the Hopf $*$-algebra $\Pol(\QG)$ of a compact quantum group is generated by projections, then $\Pol(\QG)$ admits no non-zero Gaussian functionals. This is in particular the case for the quantum permutation groups $S_{N}^{+}$ (see \cite{wang98} for the definition) and the quantum reflection groups $H_{N}^{s+}$ for $1\leqslant s < \infty$ (see \cite{BV09} for the definition). Note also that we will often simply speak of Gaussian pairs or Gaussian functionals on $\QG$ (as opposed to on $\Pol(\QG)$), as mentioned already in the introduction.

\section{Free unitary quantum groups}
\label{Sec:unitary}

We start our study with free unitary quantum groups, which were originally defined in \cite{wang95}. We refer the reader to \cite{freslon23} for a detailed treatment of the theory and the definitions of the objects that we will use, but most of our computations only involve the defining relations of the corresponding $*$-algebras, which we now give. Throughout this section we fix $N\in \N$.

\begin{definition}
Let $\Pol(U_{N}^{+})$ be the universal $*$-algebra generated by $N^{2}$ elements $(u_{ij})_{1\leqslant i, j\leqslant N}$ such that the matrices $U = (u_{ij})_{i,j=1}^N$ and $\overline{U} = (u_{ij}^{*})_{i,j=1}^N$ are both unitary. It is easy to check that the formula $\varepsilon(u_{ij}) = \delta_{ij}$, for all $1\leqslant i, j \leqslant N$ determines a character on $\Pol(U_{N}^{+})$.
\end{definition}

To classify Gaussian pairs on $U_{N}^{+}$, we will first record an upgraded version of an observation contained already in \cite{dfks}.

\begin{lemma}\label{lem:deriv}
Any matrix $A = (a_{ij})_{1\leqslant i,j\leqslant N}\in M_{N}(\C)$ determines a (unique) $\varepsilon$-derivation $D_{A}: \Pol(U_{N}^{+})\to\C$ through the formula
\begin{equation*}
D_{A}(u_{ij}) = a_{ij} \text{ for all } 1\leqslant i,j\leqslant N.
\end{equation*}
Moreover we have
\begin{equation*}
D_{A}(u_{ij}^{*}) = - a_{ji} \text{ for all } 1\leqslant i,j\leqslant N.
\end{equation*}
Thus, $D_{A}$ is a drift if and only if $A = -A^{*}$.
\end{lemma}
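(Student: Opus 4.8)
The plan is to realise $D_{A}$ as an $\varepsilon$-derivation on the free $*$-algebra that descends to the quotient, and to read off uniqueness (together with the formula for $D_{A}(u_{ij}^{*})$) directly from the defining relations. Write $\Pol(U_{N}^{+}) = \mathcal{F}/J$, where $\mathcal{F}$ is the free unital $*$-algebra on symbols $v_{ij}$ and $J$ is the two-sided ideal generated by the four families of unitarity relations coming from $UU^{*} = U^{*}U = I$ and $\overline{U}\,\overline{U}^{*} = \overline{U}^{*}\overline{U} = I$. I would treat uniqueness and the ``Moreover'' clause together, since both are forced. Any $\varepsilon$-derivation $D$ kills the unit (as $D(\mathbf{1}) = 2D(\mathbf{1})$) and is determined on all of $\Pol(U_{N}^{+})$ by its values on the algebra generators $u_{ij}, u_{ij}^{*}$ via the Leibniz rule. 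Applying $D$ to $\sum_{k} u_{ik}u_{jk}^{*} = \delta_{ij}\mathbf{1}$ and using $\varepsilon(u_{ij}) = \delta_{ij}$ gives
\begin{equation*}
0 = \sum_{k} \left(\varepsilon(u_{ik}) D(u_{jk}^{*}) + D(u_{ik})\varepsilon(u_{jk}^{*})\right) = D(u_{ji}^{*}) + a_{ij},
\end{equation*}
so necessarily $D(u_{ij}^{*}) = -a_{ji}$. Hence the values on all generators are pinned down by $A$, which proves both uniqueness and the stated formula.

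For existence I would build the derivation on $\mathcal{F}$ and check that it passes to the quotient. There is a unique $\varepsilon$-derivation $\tilde{D}: \mathcal{F} \to \C$ (for the character sending $v_{ij}\mapsto\delta_{ij}$) with $\tilde{D}(v_{ij}) = a_{ij}$ and $\tilde{D}(v_{ij}^{*}) = -a_{ji}$, because an $\varepsilon$-derivation on a free $*$-algebra is freely and uniquely determined by its values on the free generators $v_{ij}, v_{ij}^{*}$. Every defining relation $r$ lies in $\ker\varepsilon$ (i.e.\ $\varepsilon(r) = 0$), so the Leibniz rule collapses to $\tilde{D}(xry) = \varepsilon(x)\tilde{D}(r)\varepsilon(y)$ for $x, y\in\mathcal{F}$; thus $\tilde{D}$ annihilates $J$ as soon as $\tilde{D}(r) = 0$ on each generating relation. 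The main (though routine) step is then to verify $\tilde{D}(r) = 0$ on each of the four families: for $UU^{*} = I$ this reads $\sum_{k}\left(\delta_{ik}(-a_{kj}) + a_{ik}\delta_{jk}\right) = 0$, and the remaining three families cancel in exactly the same fashion. Consequently $\tilde{D}$ descends to the desired $\varepsilon$-derivation $D_{A}$ on $\Pol(U_{N}^{+})$.

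For the drift statement I would again use that a derivation is determined by its values on generators. A drift is by definition a hermitian $\varepsilon$-derivation, so only hermitianity remains to be characterised. I would observe that $E(b) := \overline{D_{A}(b^{*})}$ is again an $\varepsilon$-derivation (using that $\varepsilon$ is a $*$-character), and that $D_{A}$ is hermitian precisely when $E = D_{A}$, that is, when they agree on the generators. Comparing $E(u_{ij}) = \overline{D_{A}(u_{ij}^{*})} = -\overline{a_{ji}}$ with $D_{A}(u_{ij}) = a_{ij}$ (and likewise on $u_{ij}^{*}$) yields the single condition $a_{ij} = -\overline{a_{ji}}$ for all $i,j$, i.e.\ $A = -A^{*}$.

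The main obstacle is the consistency check in the existence step: the value $D_{A}(u_{ij}^{*}) = -a_{ji}$ is forced by one relation alone, yet it must be compatible with all four unitarity families simultaneously (an a priori overdetermined system). That this compatibility holds is exactly what makes the construction legitimate, and it reflects the symmetric way in which the relations are built from $U$ and $\overline{U}$.
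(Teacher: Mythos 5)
Your proof is correct and, unlike the paper's, entirely self-contained. The paper disposes of the lemma by appeals to the literature: existence and uniqueness of $D_{A}$ is quoted from \cite[Prop 3.2]{dfks} (the case $R=\mathrm{I}_{N}$); the formula $D_{A}(u_{ij}^{*})=-a_{ji}$ is deduced from the general identity $\eta\circ S=-\eta$, valid for every Gaussian cocycle on the Hopf $*$-algebra of a compact quantum group (cited from the proof of \cite[Thm 3.11]{FFS23}), together with $S(u_{ij})=u_{ji}^{*}$; the drift characterisation is then argued essentially as you do, from the fact that the hermitian conjugate of a $\C$-valued $\varepsilon$-derivation is again one, plus injectivity of $A\mapsto D_{A}$. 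You replace the two citations with direct computations: existence by constructing the derivation on the free $*$-algebra and verifying that it kills the four families of unitarity relations (legitimate since each relation lies in $\ker\varepsilon$, so $\tilde{D}(xry)=\varepsilon(x)\tilde{D}(r)\varepsilon(y)$), and the sign-flip formula by applying an arbitrary $\varepsilon$-derivation to $\sum_{k}u_{ik}u_{jk}^{*}=\delta_{ij}\mathbf{1}$, which simultaneously yields uniqueness. Your computations check out: all four families do cancel, and comparing $E(b)=\overline{D_{A}(b^{*})}$ with $D_{A}$ on generators gives exactly $A=-A^{*}$. As for what each approach buys: yours is elementary and makes transparent that the only input is unitarity of $U$ and $\overline{U}$ --- indeed the ``overdetermined consistency'' you flag at the end is precisely what the antipode identity packages conceptually, since $S$ encodes all four relations at once; the paper's route is shorter and explains why the same sign flip $\eta(u_{ij}^{*})=-\eta(u_{ji})$ occurs for Gaussian cocycles on every compact quantum group, not just on $U_{N}^{+}$.
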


\begin{proof}
The first statement is a special case of \cite[Prop 3.2]{dfks} (with $R = \mathrm{I}_{N})$. The second is a consequence of the formula $\eta\circ S = -\eta$, valid for any Gaussian cocycle on a Hopf $*$-algebra associated with a compact quantum group, with $S$ denoting the antipode (see the proof of \cite[Thm 3.11]{FFS23}), and the fact that $S(u_{ij}) = u_{ji}^{*}$ for all $1\leqslant i,j \leqslant N$. The last statement follows from the fact that the complex conjugate of a $\C$-valued $\varepsilon$-derivation is also an $\varepsilon$-derivation and the injectivity of the map $A \mapsto D_{A}$.
\end{proof}

The following is the first version of the main result of this section.

\begin{theorem}\label{thm-gauss-UNplus}
Let $d\in\mathbb{N}$, let $L_{1}, \cdots, L_{d}\in M_{N}(\C)$ be such that
\begin{equation}\label{eq-L-cond}
\sum_{r=1}^{d} L^{*}_{r}L_{r} = \sum_{r=1}^{d} L_{r}L^{*}_{r}
\end{equation}
and let $H\in M_{N}(\C)$ be anti-hermitian ($H = -H^{*}$). Denote by $(e_{1}, \cdots, e_{d})$ the usual orthonormal basis of $\C^{d}$. Then there exists a Gaussian cocycle $\eta : \Pol(U_{N}^{+}) \to \C^{d}$ and a hermitian functional $\Gamma : \Pol(U_{N}^{+}) \to \C$ such that for every $1\leqslant i,j \leqslant N$ we have
\begin{equation*}
\eta(u_{ij}) = \sum_{r=1}^{d} (L_{r})_{ij} e_{r} \quad \& \quad \Gamma(u_{ij}) = - \frac{1}{2}\sum_{r=1}^{d} (L_{r}^{*}L_{r})_{ij},
\end{equation*}
and for all $a, b\in \Pol(U_{N}^{+})$,
\begin{equation*}
\partial{\Gamma} (a^{*}\otimes b) = \langle \eta(a), \eta(b)\rangle.
\end{equation*}
The conditions above determine the pair $(\Gamma, \eta)$ uniquely. Set $\phi = \Gamma + D_{H}$. Then $(\phi, \eta)$ is a Gaussian pair, and moreover all Gaussian pairs with surjective cocycles on $\Pol(U_{N}^{+})$ (hence also all Gaussian generating functionals) arise in this way. We may in addition choose the matrices $L_{1}, \cdots, L_{d}$ to be linearly independent. 
\end{theorem}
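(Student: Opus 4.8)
The plan is to construct the Gaussian cocycle $\eta$ and the functional $\Gamma$ explicitly from the data $(L_1,\dots,L_d)$, verify the required identities, and then use Theorem~\ref{GST} together with the uniqueness/equivalence statements to conclude that $(\Gamma+D_H,\eta)$ is a Gaussian pair and that every Gaussian pair arises this way. The key to the whole argument is that $\Pol(U_N^+)$ is a \emph{free} $*$-algebra modulo the two unitarity relations, so a Gaussian cocycle is determined freely by its values on the generators $u_{ij}$ (and on $u_{ij}^*$, via $\eta(x^*)=-\varepsilon\text{-derivation behaviour}$ forcing compatibility), subject only to the constraints imposed by those defining relations.

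\medskip

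\emph{Existence of $\eta$.} First I would define $\eta(u_{ij})=\sum_r (L_r)_{ij} e_r$ and extend by the cocycle (\emph{$\varepsilon$-derivation}) rule. Since $\Pol(U_N^+)$ is universal, the only thing to check is that the candidate $\eta$ respects the relations $U^*U=UU^*=I$ and $\overline U^*\overline U=\overline U\,\overline U^*=I$. Because $\eta$ is an $\varepsilon$-derivation and $\varepsilon(u_{ij})=\delta_{ij}$, applying $\eta$ to a relation like $\sum_k u_{ki}^* u_{kj}=\delta_{ij}\mathbf 1$ yields, using $\eta(\mathbf 1)=0$, a condition of the form $\sum_k\big(\varepsilon(u_{ki}^*)\eta(u_{kj})+\eta(u_{ki}^*)\varepsilon(u_{kj})\big)=0$. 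The value $\eta(u_{ij}^*)$ is forced by Lemma~\ref{lem:deriv}-type reasoning (the antipode identity $\eta\circ S=-\eta$ with $S(u_{ij})=u_{ji}^*$), giving $\eta(u_{ij}^*)=-\sum_r(L_r)_{ji}e_r$. Feeding these in, the four relations collapse precisely to $\sum_r L_r^*L_r=\sum_r L_rL_r^*$, which is exactly hypothesis~\eqref{eq-L-cond}. So \eqref{eq-L-cond} is both necessary and sufficient for $\eta$ to be well defined.

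\medskip

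\emph{Existence of $\Gamma$ and the Gaussian pair.} Next I would define $\Gamma$ on generators by the stated formula $\Gamma(u_{ij})=-\tfrac12\sum_r(L_r^*L_r)_{ij}$ and determine $\Gamma$ on all of $\Pol(U_N^+)$ inductively via the Gaussian recursion displayed after the definition of Gaussianity (this is the only freedom, since a Gaussian functional is determined by its values on generators and on degree-$2$ words). I would then verify hermitianity of $\Gamma$ and the coboundary identity $\partial\Gamma(a^*\otimes b)=\langle\eta(a),\eta(b)\rangle$; by bilinearity and the fact that both sides are built from $\varepsilon$-derivation/Gaussian data, it suffices to check this on generators and their products, where it reduces to a finite linear-algebra computation comparing $\partial\Gamma(u_{ij}^*\otimes u_{kl})$ with $\langle\eta(u_{ij}),\eta(u_{kl})\rangle=\sum_r(L_r)_{ij}\overline{(L_r)_{kl}}=\sum_r(L_r^*)_{ji}\dots$. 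Once \eqref{eq-cobound} holds, Theorem~\ref{GST} guarantees $\Gamma$ (hence $\phi=\Gamma+D_H$, since $D_H$ is a drift and adding a drift preserves being a Gaussian pair with the same $\eta$) is a genuine Gaussian generating functional, using that $H=-H^*$ makes $D_H$ hermitian by Lemma~\ref{lem:deriv}.

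\medskip

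\emph{Surjectivity onto all Gaussian pairs.} For the converse, I would start from an arbitrary Gaussian pair $(\phi,\eta)$ with surjective cocycle $\eta:\Pol(U_N^+)\to D$. Set $L_r$ by expanding $\eta(u_{ij})=\sum_r(L_r)_{ij}e_r$ in an orthonormal basis $(e_r)$ of the finite-dimensional space $D$ (finite-dimensionality follows since $D$ is spanned by the finitely many $\eta(u_{ij})$ and $\eta(u_{ij}^*)$ as $\eta$ is a surjective cocycle on an algebra generated by the $u_{ij}$). That $\eta$ is well defined forces \eqref{eq-L-cond} by the computation above. Then $\Gamma$ built from these $L_r$ and the original $\phi$ have the same cocycle $\eta$, so by the Remark on drifts their difference $\phi-\Gamma$ is a drift, which on $U_N^+$ must equal some $D_H$ by Lemma~\ref{lem:deriv}; hermitianity of $\phi$ and $\Gamma$ forces $H=-H^*$. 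Finally, discarding any linearly dependent $L_r$ (equivalently, choosing $(e_r)$ to be a genuine orthonormal basis of $D$, so $d=\dim D$) gives the linear independence. \textbf{The main obstacle} I anticipate is the coboundary verification $\partial\Gamma(a^*\otimes b)=\langle\eta(a),\eta(b)\rangle$: one must confirm that the inductively-defined $\Gamma$ is consistent with this inner-product condition on \emph{all} of $\Pol(U_N^+)$, not merely on generators, and in particular that the specific normalization $-\tfrac12$ in $\Gamma(u_{ij})$ is exactly what is needed for the degree-counting in the Gaussian recursion to match the sesquilinear form $\langle\eta(\cdot),\eta(\cdot)\rangle$; this is where the interplay between the drift part (odd under $S$) and the diffusion part (even, positive) has to be disentangled carefully.
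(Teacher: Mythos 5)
There is a genuine gap, and it sits exactly where the real work of the theorem lies. You claim that applying the candidate cocycle $\eta$ to the four unitarity relations ``collapses precisely to'' \eqref{eq-L-cond}, so that \eqref{eq-L-cond} is necessary and sufficient for $\eta$ to be well defined. This is false. Writing $\hat u_{ij} = u_{ij} - \delta_{ij}\mathbf{1}$, the relation element $\sum_k u_{ki}^*u_{kj} - \delta_{ij}\mathbf{1}$ equals $\hat u_{ij} + \hat u_{ji}^* + \sum_k \hat u_{ki}^*\hat u_{kj}$; since every $\varepsilon$-derivation vanishes on $K_2(\Pol(U_N^+))$, applying $\eta$ kills the quadratic part and yields only the linear constraint $\eta(u_{ij}) + \eta(u_{ji}^*) = 0$, which is satisfied automatically once one sets $\eta(u_{ij}^*) := -\sum_r (L_r)_{ji}e_r$ --- no condition on the $L_r$ whatsoever. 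This is consistent with the paper's Lemma \ref{lem:deriv}: \emph{every} matrix $A\in M_N(\C)$, with no hypothesis, defines an $\varepsilon$-derivation $D_A$ on $\Pol(U_N^+)$. A quadratic condition such as \eqref{eq-L-cond} simply cannot be produced by evaluating a derivation on the relations.

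The consequence is structural: in your scheme \eqref{eq-L-cond} is ``used up'' at a step where it is vacuous, and it is never invoked again, so if your argument were correct then every cocycle on $\Pol(U_N^+)$ would admit a compatible generating functional --- which is false. The condition \eqref{eq-L-cond} is exactly what makes the recursively defined $\Gamma$ (generator values as stated, degree-two values forced by $\partial\Gamma(a^*\otimes b)=\langle\eta(a),\eta(b)\rangle$, higher degrees by Gaussianity) vanish on the ideal generated by the unitarity relations: for instance $0=\Gamma\bigl(\sum_k u_{ki}^*u_{kj}-\delta_{ij}\mathbf{1}\bigr)=\Gamma(u_{ij})+\Gamma(u_{ji}^*)+\sum_k\langle\eta(u_{ki}),\eta(u_{kj})\rangle$, and comparing this with the analogous identities coming from $UU^*=I$ and the relations for $\overline U$ is precisely where $\sum_r L_r^*L_r=\sum_r L_rL_r^*$ enters. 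This is the content of \cite[Theorem 3.3]{dfks}, which the paper invokes rather than reproves; you flag this verification as your ``main obstacle'' but leave it unresolved, and having misplaced \eqref{eq-L-cond} you have no hypothesis left with which to resolve it. The same error recurs in your converse (``that $\eta$ is well defined forces \eqref{eq-L-cond}''): well-definedness of $\eta$ forces nothing; what forces \eqref{eq-L-cond} is that $\phi$ itself vanishes on the relations while satisfying the coboundary identity. The remainder of your converse --- extracting the $L_r$ from a surjective cocycle, showing $\phi-\Gamma$ is a drift of the form $D_H$ with $H=-H^*$ via Lemma \ref{lem:deriv}, and identifying linear independence of the $L_r$ with surjectivity of $\eta$ --- matches the paper's argument and is fine.
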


\begin{proof}
Assume first that we are given matrices $L_{1}, \cdots, L_{d} \in M_{N}(\C)$ satisfying Equation \eqref{eq-L-cond} and $H\in M_{N}(\C)$ such that $H = -H^{*}$. Lemma \ref{lem:deriv} guarantees that for every $1\leqslant r\leqslant d$, we have an $\varepsilon$-derivation $D_{L_{r}}: \Pol(U_{N}^{+})\to \C$; it is then immediate that 
\begin{equation*}
\eta := \sum_{r=1}^{d} D_{L_{r}}e_{r} : \Pol(U_{N}^{+}) \to \C^{d} 
\end{equation*}
is also an $\varepsilon$-derivation.

Let us introduce matrices $B, \tilde{B} \in M_{N}(\C)$ through the formul\ae{} (for $1\leqslant i,j\leqslant N$)
\begin{equation}\label{eqBdef1}
B_{ij} = \sum_{p=1}^{N} \langle \eta(u_{ip}), \eta(u_{jp})\rangle = \sum_{r=1}^{d} \sum_{p=1}^{N} \overline{(L_{r})_{ip}} (L_{r})_{jp} = \sum_{r=1}^{d} (L_{r}L_{r}^{*})_{ji},
\end{equation}
\begin{equation}\label{eqBdef2}
\tilde{B}_{ij} = \sum_{p=1}^{N} \langle \eta(u_{ip}^{*}), \eta(u_{jp}^{*})\rangle = \sum_{r=1}^{d} \sum_{p=1}^{N} \overline{(L_{r})_{pi}} (L_{r})_{pj}  = \sum_{r=1}^{d}  (L_{r}^{*} L_{r})_{ij},
\end{equation}
where in the second string of equalities we use Lemma \ref{lem:deriv}.

The forward direction of \cite[Theorem 3.3]{dfks} states that if $B = \tilde{B}^{t}$ (which as the above computation shows is equivalent to Equation \eqref{eq-L-cond}), then $\eta$ admits a generating functional. Moreover the generating functional $\Gamma$ constructed in the proof of \cite[Theorem 3.3]{dfks} satisfies for all $1\leqslant i, j\leqslant N$ the equalities   
\begin{equation}\label{eqGammadef}
\Gamma(u_{ij}) = -\frac{1}{2} \tilde{B}_{ij} = -\frac{1}{2} \sum_{r=1}^{d} (L_{r}^{*} L_{r})_{ij},
\end{equation}
which is the condition displayed in the statement. The uniqueness claim follows as $\Gamma$ is assumed to be hermitian, so that the algebraic conditions determine both $\eta$ and $\Gamma$ in terms of their values on generators $u_{ij}$. Eventually, the last part of Lemma \ref{lem:deriv} shows that $D_{H}$ is a drift. Thus $(\phi, \eta)$ is a Gaussian pair.

Assume conversely that $\phi$ is a Gaussian functional on $\Pol(U_{N}^{+})$. By Theorem \ref{GST}, we can assume that it is a part of a Gaussian pair $(\phi, \eta)$, where $\eta: \Pol(U_{N}^{+}) \to D$ is surjective. As the derivation property implies that the image of $\eta$ is spanned by the images of the generators $u_{ij}$ for $1\leqslant i,j\leqslant N$, the space $D$ must be finite dimensional. Set $d = \dim(D)$ and identify $D$ with $\C^{d}$. Then, set for $1\leqslant r\leqslant d$, $1\leqslant i,j \leqslant N$
\begin{equation*}
(L_{r})_{i,j} = \langle e_{r}, \eta(u_{ij}) \rangle,
\end{equation*} 
so that
\begin{equation*}
\eta = \sum_{r=1}^{d} D_{L_{r}}e_{r}.
\end{equation*}
The argument in the first part of the proof shows that $\eta$ admits a generating functional $\Gamma : \Pol(U_{N}^{+})\to \C$ satisfying the conditions listed in the theorem. As $(\Gamma, \eta)$ is a Gaussian pair, by the remarks after Definition \ref{def:GaussPair} we must have $\Gamma = \phi + \omega$, where $\omega : \Pol(U_{N}^{+}) \to \C$ is a drift. Thus the last part of Lemma \ref{lem:deriv} ends the proof of the main statement of the theorem.

Eventually, let us prove that the cocycle
\begin{equation*}
\eta = \sum_{r=1}^{d} D_{L_{r}} e_r
\end{equation*}
is surjective if and only if the matrices  $L_{1}, \cdots, L_{d}$ are linearly independent. Indeed, as mentioned above, the range of $\eta$ is spanned by the elements $\eta(u_{ij})$, and it is easy to check that given a vector $\xi = (\lambda_{1}, \cdots, \lambda_{d})\in \C^{d}$, we have
\begin{equation*}
\big(\langle \xi, \eta(u_{ij}) \rangle\big)_{i,j=1}^{N} = \sum_{r=1}^{d} \overline{\lambda_{r}} L_{r},
\end{equation*}
hence $\xi \perp \mathrm{Ran}(\eta)$ if and only if $\sum \overline{\lambda_{r}} L_{r} = 0$, and the result follows.
\end{proof}

Note that given a fixed anti-hermitian $H\in M_{N}(\C)$, several tuples of linearly independent matrices $(L_{1}, \cdots, L_{d})$ as above can yield the same generating functional (and equivalent Gaussian pairs). The proof above shows that the freedom is related to the choice of the orthonormal basis in the carrier space of the cocycle $\eta$, which affects the matrices $L_{r}$ via a unitary transformation. In other words, given an alternative tuple of linearly independent matrices $(\tilde{L}_{1}, \cdots, \tilde{L}_{d})$ leading -- together with $H$ -- to the same $\phi$, we have a unitary $U \in M_{d}(\C)$ such that for all $1\leqslant i\leqslant d$, $\tilde{L}_{i} = \sum_{r=1}^{d} U_{ri} L_{r}$. Note that this operation does not affect the matrix
\begin{equation*}
W = \sum_{r=1}^{d} L_{r} \otimes L_{r}^{*} \in M_{N}(\C)\otimes M_{N}(\C).
\end{equation*}
This observation leads to the second main theorem of this section. We begin with a lemma characterising the matrices of the form mentioned above.

\begin{lemma}\label{lem:Choi}
Consider a matrix $W \in M_{N}(\C) \otimes M_{N}(\C)$,
\begin{equation*}
W = \sum_{i,j,k,l=1}^{n} W_{ij,kl} e_{ij} \otimes e_{kl}
\end{equation*}
for certain coefficients $W_{ij,kl}\in \C$. Then, the following conditions are equivalent:
\begin{itemize}
\item[(i)] there exists $d\in \N$ and matrices $L_{1}, \cdots, L_{d} \in M_{N}(\C)$ such that
\begin{equation*}
W = \sum_{r=1}^{d} L_{r}\otimes L_{r}^{*};
\end{equation*}
\item[(ii)] for every matrix $X\in M_{N}(\C)$, we have
\begin{equation*}
\sum_{i, j, k, l=1}^{N} \overline{X_{ik}} W_{ki,jl} X_{jl} \geqslant 0.
\end{equation*}
\end{itemize}
\end{lemma}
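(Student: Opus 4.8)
The plan is to recognise condition (ii) as the positive semidefiniteness of a single matrix obtained from $W$ by a fixed permutation of its four indices; once this identification is made, the equivalence with (i) reduces to the spectral theorem, read forwards for (ii)$\Rightarrow$(i) and backwards for (i)$\Rightarrow$(ii). This is a concrete form of Choi's theorem on completely positive maps, as the label of the lemma indicates.

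First I would identify a matrix $X \in M_{N}(\C)$ with the vector $\widehat{X} \in \C^{N} \otimes \C^{N}$ whose $(i,k)$-entry is $X_{ik}$, and introduce the matrix $M \in M_{N}(\C)\otimes M_{N}(\C)$ defined by $M_{(ik),(jl)} = W_{ki,jl}$. With these conventions the quantity appearing in (ii) is exactly the sesquilinear expression
\[
\langle \widehat{X}, M\widehat{X}\rangle = \sum_{i,j,k,l=1}^{N} \overline{X_{ik}}\, M_{(ik),(jl)}\, X_{jl}.
\]
Since (ii) requires this to be real and nonnegative for every $X$, a standard argument (writing $M = P + iQ$ with $P,Q$ hermitian and using that $\langle \widehat{X}, Q\widehat{X}\rangle$ must vanish for all $X$) forces $Q = 0$, so condition (ii) is equivalent to $M = M^{*} \geqslant 0$. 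The whole lemma will then follow from showing that (i) is likewise equivalent to $M \geqslant 0$.

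For (i)$\Rightarrow$(ii) I would substitute $W_{ki,jl} = \sum_{r} (L_{r})_{ki}(L_{r}^{*})_{jl} = \sum_{r}(L_{r})_{ki}\overline{(L_{r})_{lj}}$ into the fourfold sum and regroup it as a sum over $r$ of a product of two scalar factors. A short check shows that these two factors are complex conjugates of one another, equal to $\overline{\mathrm{Tr}(X\overline{L_{r}})}$ and $\mathrm{Tr}(X\overline{L_{r}})$; hence the expression equals $\sum_{r} |\mathrm{Tr}(X\overline{L_{r}})|^{2} \geqslant 0$. Reading the same algebraic identity backwards gives (ii)$\Rightarrow$(i): assuming $M \geqslant 0$, the spectral theorem furnishes vectors $v_{1}, \dots, v_{d} \in \C^{N}\otimes\C^{N}$ with $M = \sum_{r=1}^{d} v_{r} v_{r}^{*}$, where $d = \mathrm{rank}(M)$. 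Writing $(v_{r})_{ik}$ for the entries and defining $L_{r} \in M_{N}(\C)$ by $(L_{r})_{ab} = (v_{r})_{ba}$, the identity $M_{(ik),(jl)} = \sum_{r}(v_{r})_{ik}\overline{(v_{r})_{jl}}$ translates entrywise into $W_{ki,jl} = \sum_{r}(L_{r})_{ki}(L_{r}^{*})_{jl}$, which is precisely the claim that $W = \sum_{r} L_{r}\otimes L_{r}^{*}$.

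The only genuine work is the index bookkeeping, that is, pinning down the correct reshuffling $M_{(ik),(jl)} = W_{ki,jl}$ together with the transposition $(L_{r})_{ab} = (v_{r})_{ba}$ relating the eigenvectors of $M$ to the matrices $L_{r}$; once the right conventions are fixed, both implications are the same computation traversed in opposite directions, and no estimate beyond the spectral theorem is needed. As a byproduct the construction exhibits the minimal possible $d$ as $\mathrm{rank}(M)$, with the corresponding $L_{r}$ linearly independent, which dovetails with the linear independence statement already used for the cocycle in Theorem \ref{thm-gauss-UNplus}.
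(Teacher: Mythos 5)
Your proof is correct, but it takes a genuinely more elementary, self-contained route than the paper. The paper views $W$ as defining a linear map $\Psi_{W} : M_{N}(\C) \to M_{N}(\C)$ (determined by $\Psi_{A\otimes B}(Z) = AZB$) and invokes two standard results from the theory of completely positive maps: the Kraus characterisation (equivalence of (i) with complete positivity of $\Psi_{W}$) and Choi's theorem (equivalence of complete positivity with positivity of the Choi matrix), finishing with the computation that the Choi matrix of $\Psi_{W}$ has entries $W_{ki,jl}$ --- that is, it is exactly your reshuffled matrix $M$. You instead prove the equivalence directly: your identification of (ii) with $M = M^{*} \geqslant 0$, followed by the spectral decomposition $M = \sum_{r} v_{r}v_{r}^{*}$ and the reshaping $(L_{r})_{ab} = (v_{r})_{ba}$, is in effect an inlined proof of Choi's theorem in this special case, while your computation that the quadratic form equals $\sum_{r} \left\vert \mathrm{Tr}(X\overline{L_{r}})\right\vert^{2} \geqslant 0$ replaces the appeal to Kraus. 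Both arguments are sound; your index bookkeeping checks out, and you correctly handle the point (implicit in the paper, where it is absorbed into positivity of the Choi matrix) that (ii) forces $M$ to be hermitian before the spectral theorem can be applied. What each approach buys: yours avoids operator-theoretic black boxes and yields for free that the minimal $d$ equals $\mathrm{rank}(M)$ with linearly independent $L_{r}$'s, which dovetails with the surjectivity discussion in Theorem \ref{thm-gauss-UNplus}; the paper's version is shorter and introduces the map $\Psi_{W}$, which it reuses immediately afterwards, for instance to rewrite condition \eqref{eq-L-cond} as $\Psi_{W}(1) = \Psi_{W}^{*}(1)$ in the remark following the lemma.
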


\begin{proof}
Let $A, B \in M_{N}(\C)$ and define a map $\Psi_{A \otimes B}: M_{N}(\C)\to M_{N}(\C)$ by the formula 
\begin{equation*}
\Psi_{A \otimes B}(Z) = AZB \text{ for } Z \in M_{N}(\C).
\end{equation*}
This extends by bilinearity to a linear map $\Psi_{W} : M_{N}(\C)\otimes M_{N}(\C)\to B(M_{N}(\C))$ which is easily seen to be injective. Applying the Kraus characterisation of completely positive maps on matrices, we see that $\Psi_{W}$ is completely positive if and only if $W$ satisfies (i). On the other hand, by Choi's theorem, $\Psi_{W}$ is completely positive if and only if its $N$-th matrix lifting maps the Choi matrix $E:=(e_{ij})_{i,j=1}^{N} \in M_{N}(M_{N}(\C))$ to a positive matrix. We therefore compute:
\begin{equation*}
\Psi_W^{(N)}(E) = (\Psi_{W}(e_{ij}))_{i, j=1}^{N} = \left(\sum_{k, l=1}^{N} W_{ki,jl} e_{kl}\right)_{i,j=1}^{N},
\end{equation*}
so that
$\Psi_{W}^{(N)}(E) \geqslant 0$ if and only if for any vectors $\xi_{1}, \cdots, \xi_{N} \in \C^{N}$ we have
\begin{equation*}
\sum_{i, j=1}^{N} \left\langle \xi_{i}, \sum_{k,l=1}^{N}W_{ki,jl} e_{kl} \xi_{j} \right\rangle \geqslant 0,
\end{equation*}
which is precisely condition (ii) in the lemma (set $X_{ij}= \langle e_i, \xi_j \rangle$ for $1\leqslant i, j\leqslant N$).
\end{proof}

\begin{remark}
Condition \eqref{eq-L-cond} can be expressed in terms of
\begin{equation*}
W = \sum_{r=1}^{d} L_{r}\otimes L_{r}^{*}
\end{equation*}
simply as $M(W) = M(\Sigma(W))$, where $M$ denotes the multiplication of two elements of the tensor product, and $\Sigma$ is the tensor flip. We can also express it using the map $\Psi_{W}$. Indeed, viewing $M_{N}(\C)$ as a Hilbert space, when equipped with the scalar product $\langle X, Y \rangle := \mathrm{Tr}(X^*Y)$ (for $X,Y \in M_{N}(\C)$), given a map $\Psi : M_{N}(\C) \to M_{N}(\C)$ we can also consider its Hilbert space adjoint $\Psi^{*} : M_{N}(\C) \to M_{N}(\C)$. Assume then that $W \in M_{N}(\C)\otimes M_{N}(\C)$ decomposes as $W = \sum_{i=1}^{r} L_{i}\otimes L_{i}^{*}$. Then, for all $X,Y \in M_{N}(\C)$,
\begin{equation*}
\langle X, \Psi_{W}(Y) \rangle = \mathrm{Tr}\left(X^{*}\sum_{i=1}^{d} L_{i}YL_{i}^{*}\right) = 
\sum_{i=1}^{r} \mathrm{Tr}(L_{i}^{*}X^{*}L_{i}Y)
\end{equation*}
so that $(\Psi_{W})^{*} = \Psi_{\Sigma(W)}$. Thus, $M(W) = M(\Sigma(W))$ if and only if $\Psi_{W}(1) = \Psi_{W}^{*}(1)$.
\end{remark}

We can now give the second form of our classification of Gaussian generating functionals on free unitary quantum groups.

\begin{theorem} \label{th:unitaryW}
There is a one-to-one correspondence between Gaussian generating functionals on $\Pol(U_{N}^{+})$ and pairs $(W, H)$ where
\begin{enumerate}
\item $H\in M_{N}(\C)$ is anti-hermitian;
\item $W = (w_{ijkl})_{1\leqslant i, j, k, l\leqslant N} \in M_{N}(\C)\otimes M_{N}(\C)$ satisfies $M(W) = M(\Sigma(W))$ and the positivity condition:
\begin{equation*}
\sum_{i, j, k, l=1}^{N} \overline{X_{ik}} W_{ki,jl} X_{jl} \geqslant 0.
\end{equation*}
for all $X = (X_{jk})_{1\leqslant j,k\leqslant N} \in M_{N}(\C)$.
\end{enumerate}
\end{theorem}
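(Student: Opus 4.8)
The plan is to recast Theorem~\ref{thm-gauss-UNplus} in the language of the single matrix $W$, using Lemma~\ref{lem:Choi} to translate between tuples $(L_{1},\ldots,L_{d})$ and matrices $W$. Concretely, I would define the forward map sending a Gaussian generating functional $\phi$ to a pair $(W,H)$ as follows. By Theorem~\ref{thm-gauss-UNplus}, $\phi$ belongs to a Gaussian pair $(\phi,\eta)$ with surjective cocycle, $\phi = \Gamma + D_{H}$ for an anti-hermitian $H$, and $\eta = \sum_{r=1}^{d} D_{L_{r}} e_{r}$ for suitable linearly independent $L_{r}$ satisfying \eqref{eq-L-cond}; then set $W = \sum_{r=1}^{d} L_{r} \otimes L_{r}^{*}$. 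The inverse map starts from a pair $(W,H)$: by Lemma~\ref{lem:Choi} the positivity condition lets us write $W = \sum_{r} L_{r} \otimes L_{r}^{*}$, the relation $M(W) = M(\Sigma(W))$ is exactly \eqref{eq-L-cond} by the Remark, and Theorem~\ref{thm-gauss-UNplus} then returns the Gaussian generating functional $\phi = \Gamma + D_{H}$. The bulk of the proof is checking that these two assignments are well defined and mutually inverse.

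The first point to verify is that $(W,H)$ depends only on $\phi$ and not on the auxiliary choices. Here I would invoke the uniqueness of the surjective cocycle up to unitary equivalence (Theorem~\ref{GST}): any two decompositions of $\phi$ differ by replacing $(L_{r})_{r}$ with $(\tilde{L}_{i})_{i}$, where $\tilde{L}_{i} = \sum_{r} U_{ri} L_{r}$ for a unitary $U \in M_{d}(\C)$, and as observed just before Lemma~\ref{lem:Choi} this leaves $W = \sum_{r} L_{r} \otimes L_{r}^{*}$ unchanged. For the drift part, I would argue that $\Gamma$ is itself pinned down by $\phi$: a direct index computation gives $\langle \eta(u_{ab}),\eta(u_{cd})\rangle = W_{cd,ba}$ and $\Gamma(u_{ij}) = -\tfrac{1}{2}\,(M(\Sigma(W)))_{ij}$, so the Gaussian hermitian functional $\Gamma$ is determined on the generators $u_{ij}$ and $u_{ij}^{*}$, hence everywhere by the inductive formula. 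Since $D_{H} = \phi - \Gamma$ and $A \mapsto D_{A}$ is injective (Lemma~\ref{lem:deriv}), the anti-hermitian matrix $H$ is also uniquely determined by $\phi$.

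It then remains to confirm that the image of the forward map lands in the prescribed set and that the two maps compose to the identity. The condition on $H$ is built in; the positivity of $W$ is immediate from the implication (i)$\Rightarrow$(ii) of Lemma~\ref{lem:Choi} since $W$ has the form $\sum_{r} L_{r} \otimes L_{r}^{*}$, and $M(W) = M(\Sigma(W))$ holds because it is equivalent to \eqref{eq-L-cond} by the Remark. Surjectivity is exactly the inverse map described above, noting that the $L_{r}$ produced by Lemma~\ref{lem:Choi} need not be linearly independent but still yield a well-defined functional through Theorem~\ref{thm-gauss-UNplus}. For injectivity, two functionals $\phi_{1}, \phi_{2}$ with the same $(W,H)$ share the same $\Gamma$ (determined by $W$ through the formulas above) and the same $D_{H}$, whence $\phi_{1} = \phi_{2}$.

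I expect the only genuinely delicate step to be the well-definedness argument of the second paragraph — specifically, confirming that the passage from the cocycle $\eta$ to the functional $\Gamma$ descends through unitary equivalence, and that $\Gamma$, being Gaussian and hermitian, is fully recovered from its values on the generators together with the coboundary $\partial\Gamma(u_{ab}^{*}\otimes u_{cd}) = W_{cd,ba}$. Everything else is bookkeeping built on top of Theorems~\ref{GST} and~\ref{thm-gauss-UNplus}, Lemma~\ref{lem:Choi}, and the invariance of $W$ recorded in the surrounding Remark.
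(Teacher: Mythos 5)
Your proposal is correct, and its skeleton is the same as the paper's: both directions of the correspondence are obtained by combining Lemma \ref{lem:Choi} (to pass between $W$ and a Kraus-type family $L_{1},\ldots,L_{d}$) with Theorem \ref{thm-gauss-UNplus}, using the remark before Lemma \ref{lem:Choi} to identify $M(W)=M(\Sigma(W))$ with condition \eqref{eq-L-cond}. The only point of divergence is how bijectivity is established. You argue abstractly: surjective cocycles attached to $\phi$ are unique up to a unitary (Theorem \ref{GST}), this unitary acts on the tuple $(L_{r})_{r}$ so as to leave $W=\sum_{r}L_{r}\otimes L_{r}^{*}$ fixed, $\Gamma$ is then pinned down by $W$ (values on generators plus the coboundary identity plus Gaussianity and hermitianity), and $H$ is recovered from the drift $\phi-\Gamma$ via injectivity of $A\mapsto D_{A}$ (Lemma \ref{lem:deriv}). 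The paper explicitly flags this route as available (``this could be deduced from the proof of Theorem \ref{thm-gauss-UNplus} and remarks after the theorem'') but chooses instead to argue directly, deriving the closed-form recovery formulas $H_{ij}=\tfrac{1}{2}\left(\phi(u_{ij})-\phi(u_{ji}^{*})\right)$ of \eqref{phitoH} (which rests on the symmetry $\Gamma(u_{ij})=\Gamma(u_{ji}^{*})$ of \eqref{Gammaad}) and $\partial\phi(u_{ij}\otimes u_{kl})=-W_{ki,lj}$ of \eqref{phitoW}. The mathematical content is the same --- your identity $\langle\eta(u_{ab}),\eta(u_{cd})\rangle=W_{cd,ba}$ is \eqref{phitoW} in disguise, and it is also what guarantees that your round trip $(W,H)\mapsto\phi\mapsto(W,H)$ closes even when Lemma \ref{lem:Choi} returns a non-surjective cocycle, since the inner products of any cocycle for $\phi$ are determined by $\partial\phi$. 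What the paper's direct computation buys is that the explicit formulas \eqref{phitoH}--\eqref{phitoW} get reused later (in Proposition \ref{prop:classical}, in the matrix-lifting remark, and in the centrality arguments of Section \ref{sec:central}); what your version buys is a cleaner conceptual reduction to GNS uniqueness, at the cost of being non-constructive.
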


\begin{proof}
Given the pair $(W, H)$ as above we can use Lemma \ref{lem:Choi} to obtain matrices $L_{1}, \cdots, L_{d} \in M_{N}(\C)$ such that 
\begin{equation*}
W = \sum_{r=1}^{d} L_{r}\otimes L_{r}^{*};
\end{equation*}
as $M(W) = M(\Sigma(W))$, Condition \eqref{eq-L-cond} holds and we can apply Theorem \ref{thm-gauss-UNplus} to obtain a Gaussian generating functional $\phi = \Gamma + D_{H}$.

Conversely, given a Gaussian generating functional $\phi$, we obtain by Theorem  \ref{thm-gauss-UNplus} an anti-hermitian matrix $H$ and matrices $L_{1}, \cdots, L_{d}\in M_{N}(\C)$ such that $\phi = \Gamma + D_{H}$. We can simply set $W = \sum_{r=1}^{d} L_{r}\otimes L_{r}^{*}$.

It remains to show that the correspondence described above is  bijective; in other words, that given a Gaussian generating functional $\phi$ we can determine $(W, H)$. This could be deduced from the proof of Theorem \ref{thm-gauss-UNplus} and remarks after the theorem, but we can also argue directly. Assume that $\phi = \Gamma + D_{H}$ is constructed as in the first part of the proof. Then, $\Gamma$ is hermitian and by the formul\ae{} \eqref{eqBdef1}-\eqref{eqGammadef}, we have for each $1\leqslant i, j\leqslant N$
\begin{equation}\label{Gammaad}
\Gamma(u_{ij}) = - \frac{1}{2} \sum_{r=1}^{d} \sum_{p=1}^{N} \overline{(L_{r})_{pi}} (L_{r})_{pj} = \overline{- \frac{1}{2} \sum_{r=1}^{d} \sum_{p=1}^{N} \overline{(L_{r})_{pj}} (L_{r})_{pi}} = \overline{\Gamma(u_{ji})} = \Gamma(u_{ji}^{*}).
\end{equation}	
On the other hand 
\begin{equation*}
D_{H}(u_{ij}) - D_{H}(u_{ji}^{*}) = H_{ij} - \overline{D_{H}(u_{ji})} = H_{ij} - \overline{H_{ji}} = H_{ij} - (H^{*})_{ij} = 2H_{ij}.
\end{equation*}
Thus for every $1\leqslant i, j\leqslant N$ we have
\begin{equation}\label{phitoH}
H_{ij} = \frac{1}{2}(\phi(u_{ij}) - \phi(u_{ji}^{*})).
\end{equation}
Let then $\eta : \Pol(U_{N}^{+}) \to \C^{d}$ be a surjective cocycle such that $(\phi, \eta)$ is a Gaussian pair, and recall that we can write
\begin{equation*}
\eta = \sum_{r=1}^{d} D_{L_{r}}e_{r}.
\end{equation*}
Fix $i,j,k,l\in \{1, \cdots, N\}$ and consider the following computation:
\begin{align*}
\partial \phi(u_{ij} \otimes u_{kl}) & = \langle  \eta(u_{ij}^{*}), \eta(u_{kl} )\rangle = - \langle  \eta(u_{ji}), \eta(u_{kl} )\rangle = -\sum_{r=1}^{d} \overline{(L_{r})_{ji}} (L_{r})_{kl} \\
& = - \left(\sum_{r=1}^{d} L_{r}\otimes L_{r}^{*}\right)_{(k,i), (l,j)},
\end{align*}
so that in the end,
\begin{equation} \label{phitoW}
\partial \phi(u_{ij} \otimes u_{kl})= - W_{ki,lj}.
\end{equation}
\end{proof}

\begin{definition}
Given matrices $W \in M_{N}(\C)\otimes M_{N}(\C)$ and $H \in M_{N}(\C)$ satisfying the conditions stated in Theorem \ref{th:unitaryW} we will denote the Gaussian generating functional associated to $W$ and $0$ by $\Gamma_{W}$, so that any Gaussian generating functional $\phi$ on $\Pol(U_{N}^{+})$ decomposes uniquely as
\begin{equation*}
\phi = \Gamma_{W} + D_{H}.
\end{equation*}
We will then call $D_{H}$ the \emph{drift part} of $\phi$ and $\Gamma_{W}$ the \emph{diffusion part} of $\phi$. 
\end{definition}	

\begin{remark}
The space of anti-hermitian $N$ by $N$ complex matrices is nothing but the Lie algebra $\mathfrak{u}_{N}$ of the classical unitary group $U_{N}$. It is therefore in one-to-one correspondance with drifts on $U_{N}^{+}$. Moreover, one easily checks that if $\ast$ denotes the convolution of linear forms (induced by the coproduct), then
\begin{equation*}
D_{H}\ast D_{K} - D_{K}\ast D_{H} = D_{[H, K]}
\end{equation*}
so that drifts form a Lie algebra isomorphic to $\mathfrak{u}_{N}$.
\end{remark}

\begin{remark}
Note that the formul\ae{} \eqref{phitoH}-\eqref{phitoW} can be expressed via matrix liftings of $\phi$: if we write $U=(u_{ij})_{1\leqslant i,j\leqslant N} \in M_{N} (\Pol(U_{N}^{+}))$, and define $U\tilde{\otimes} U \in M_{N} (\C)\otimes M_{N}(\C) \otimes \Pol(U_{N}^{+})$ to be the matrix with entries $(U\tilde{\otimes} U)_{(i,k),(j,l)} = u_{ij}u _{kl}$, then we have 
\begin{align*}
H & = \frac{1}{2}\left(\phi^{(N)}(U) - \phi^{(N)}(U^{*})\right);\\
-W^{t \otimes t} & =  \phi^{(N^2)}(U \tilde{\otimes} U) - \phi^{(N)}(U) \otimes I_{N} - I_{N}\otimes \phi^{(N)} (U);\\
\phi^{(N)}(U) & = - \frac{1}{2} M(W) + H.
\end{align*}
\end{remark}

We will now briefly discuss a possible characterisation of the decomposition $\phi=\Gamma_W+D_H$. We know which generating functionals are drifts, but there is no general notion of a `driftless' (Gaussian) generating functional. But we have the following definition, writing for simplicity $K_i:=K_i(\Pol(U_N^+))$ with $i=1,2$.

\begin{definition}\label{def:drift}\cite[Remark 2.4]{fkls} Suppose that $V\subset \Pol(U_N^+)$ is a $*$-invariant vector  subspace such that $K_1=V\oplus K_2$. Denote by $P:K_1\to K_2$ the projection with respect to this decomposition. Then we say that a generating functional $\phi:\Pol(U_N^+) \to \C$ is \emph{driftless with respect to $P$} if it satisfies $\phi\circ P=\phi$.
\end{definition}

One can show that for any generating functional $\phi$ (on any $\Pol(\QG)$) there exists a projection with respect to which $\phi$ is driftless -- see \cite[Section 2.2: $K_1$ and $K_2$]{skeide}

\begin{lemma}\label{lem:k1k2}
The $N^2$ elements $u_{jk}-u^*_{kj}\in \Pol(U_N^+)$, $1\le j,k\le N$, are linearly independent.
Furthermore, if we denote $V=\mathrm{span}\{u_{jk}-u^*_{kj}; 1\le j,k\le N\}$, then $K_1=V\oplus K_2$ is a direct sum of $*$-invariant vector spaces.
\end{lemma}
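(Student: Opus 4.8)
The plan is to treat the $\varepsilon$-derivations $D_A$ supplied by Lemma \ref{lem:deriv} as the main tool: they separate the ``linear'' elements of $K_1$ while annihilating $K_2$, and the only place where the specific structure of $U_N^+$ enters is a reduction argument that uses the unitarity relations. Throughout I would write $W:=\mathrm{span}\{u_{ij}-\delta_{ij}\mathbf 1,\ u^*_{ij}-\delta_{ij}\mathbf 1 : 1\le i,j\le N\}$ for the span of the ``degree-one'' part of $K_1$.

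First I would dispose of the elementary points. Since $\varepsilon(u_{jk}-u^*_{kj})=\delta_{jk}-\delta_{kj}=0$, each $u_{jk}-u^*_{kj}$ lies in $K_1$; and as $(u_{jk}-u^*_{kj})^*=u^*_{jk}-u_{kj}=-(u_{kj}-u^*_{jk})$, the subspace $V$ is $*$-invariant. For the linear independence, suppose $\sum_{j,k}c_{jk}(u_{jk}-u^*_{kj})=0$. For any $A=(a_{ij})\in M_N(\C)$, Lemma \ref{lem:deriv} gives $D_A(u_{jk}-u^*_{kj})=a_{jk}-(-a_{jk})=2a_{jk}$, so applying $D_A$ yields $\sum_{j,k}c_{jk}a_{jk}=0$ for \emph{every} $A$, forcing all $c_{jk}=0$. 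This is exactly the first assertion.

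Next I would establish $K_1=V+K_2$. Writing each generator $g$ as $(g-\varepsilon(g)\mathbf 1)+\varepsilon(g)\mathbf 1$ and expanding products of generators (collecting terms by how many factors come from $\ker\varepsilon$) shows that $\Pol(U_N^+)=\C\mathbf 1+W+K_2$, hence $K_1=W+K_2$. The crucial step, and the one I expect to be the main obstacle, is to collapse the a priori $2N^2$-dimensional space $W$ into $V+K_2$ using unitarity. Concretely, for fixed $i,j$ the element $\sum_{p=1}^N(u_{ip}-\delta_{ip}\mathbf 1)(u^*_{jp}-\delta_{jp}\mathbf 1)$ lies in $K_2$, and expanding it while applying the relation $\sum_p u_{ip}u^*_{jp}=\delta_{ij}$ (i.e.\ $UU^*=\mathrm I_N$) gives $2\delta_{ij}\mathbf 1-u_{ij}-u^*_{ji}\in K_2$, that is
\begin{equation*}
u^*_{ji}-\delta_{ji}\mathbf 1 \equiv -(u_{ij}-\delta_{ij}\mathbf 1)\pmod{K_2}.
\end{equation*}
Consequently $u_{ij}-\delta_{ij}\mathbf 1\equiv \tfrac12(u_{ij}-u^*_{ji})\pmod{K_2}$, and the right-hand side lies in $V$; the same congruence handles the $u^*$-generators. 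Thus $W\subseteq V+K_2$ and therefore $K_1=V+K_2$.

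Finally I would prove directness. Since an $\varepsilon$-derivation vanishes on $\ker\varepsilon\cdot\ker\varepsilon=K_2$, any $v=\sum_{j,k}c_{jk}(u_{jk}-u^*_{kj})\in K_2$ satisfies $2\sum_{j,k}c_{jk}a_{jk}=D_A(v)=0$ for all $A$, whence $v=0$ by the separation argument above; this gives $V\cap K_2=\{0\}$. As $K_1=\ker\varepsilon$ is $*$-invariant (because $\varepsilon$ is hermitian), so is $K_2$, and $V$ is $*$-invariant by the first paragraph, so $K_1=V\oplus K_2$ is a direct sum of $*$-invariant subspaces, completing the proof. The only genuinely nontrivial input is the unitarity-driven reduction $W\subseteq V+K_2$; everything else is a formal consequence of the derivation property of $D_A$.
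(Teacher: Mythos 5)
Your proposal is correct and follows essentially the same route as the paper: both use the unitarity relation $\sum_p u_{ip}u^*_{jp}=\delta_{ij}$ to show that the symmetric combinations $u_{ij}+u^*_{ji}-2\delta_{ij}\mathbf{1}$ lie in $K_2$ (giving $K_1=V+K_2$), and both apply the $\varepsilon$-derivations $D_A$ of Lemma \ref{lem:deriv}, which vanish on $K_2$, to obtain linear independence and $V\cap K_2=\{0\}$. The only cosmetic difference is that you pass through the full degree-one space spanned by both $\hat{u}_{ij}$ and $\hat{u}^*_{ij}$, whereas the paper notes directly that the $\hat{u}_{jk}$ alone generate the ideal $K_1$.
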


\begin{proof}
Set $\hat{u}_{jk} = u_{jk}-\delta_{jk}\mathbf{1}$ for $j,k=1,\ldots,N$. It is easy to see that these $N^2$ elements generate the ideal $K_1$. Unitarity of $U$ implies that
\[
- \sum_{\ell=1}^N \hat{u}_{j\ell}\hat{u}^*_{k\ell} = \hat{u}_{jk}+\hat{u}^*_{kj} = - \sum_{\ell=1}^N \hat{u}^*_{\ell j}\hat{u}_{\ell k}, \qquad j,k=1,\ldots,N,
\]
which shows that the elements $\hat{u}_{jk}+\hat{u}^*_{kj}$, $j,k=1,\ldots,N$ belong to $K_2$. Since $\hat{u}_{jk}-\hat{u}^*_{kj}=u_{jk}-u^*_{kj}$, it follows that 
$V+K_2=K_1$.

Suppose we have coefficients $\lambda_{jk}\in\mathbb{C}$ ($1\leqslant, j,k \leqslant N$) and $a\in K_2$ such that
\[
\sum_{j,k=1}^N \lambda_{jk} \left(u_{jk}-u^*_{kj}\right) + a = 0.
\]
  Applying the $\varepsilon$-derivation $D_{e_{st}}$, $1\le s,t\le N$, and using Lemma \ref{lem:deriv}, we find
\[
0 = D_{e_{st}}\left(\sum_{j,k=1}^N \lambda \left(u_{jk}-u^*_{kj}\right) + a \right) = -2 \lambda_{st},
\] 
since any $\varepsilon$-derivation vanishes on $K_2$. This  implies that $K_2 \cap V =\{0\}$ and proves that the elements $u_{jk}-u^*_{kj}$, $j,k=1,\ldots,N$, are linearly independent.

Finally, since the differences $u_{jk}-u^*_{kj}$ are anti-hermitian and since $K_2$ is a $*$-ideal, it is also clear that both $V$ and $K_1$ are invariant under the involution.
\end{proof}

\begin{proposition}
Denote by $P:K_1\to K_2$ the projection defined by the decomposition in Lemma \ref{lem:k1k2}. Let $\phi$ be a Gaussian generating functional on  $\Pol(U_{N}^{+})$. Then the decomposition
$\phi = \Gamma_W + D_H$
from Theorem \ref{th:unitaryW} is the unique decomposition of $\phi$ into a drift and a Gaussian generating functional which is driftless with respect to $P$.
\end{proposition}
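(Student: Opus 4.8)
The plan is to reduce the entire statement to one observation: for the specific projection $P$ of Lemma \ref{lem:k1k2}, a generating functional is driftless with respect to $P$ precisely when it annihilates $V = \mathrm{span}\{u_{jk}-u^*_{kj}\}$. Indeed, writing $b \in K_1$ as $b = v + k$ with $v \in V$ and $k \in K_2$, we have $P(b) = k$, so the condition $\phi\circ P = \phi$ on $K_1$ is equivalent to $\phi(v) = 0$ for all $v \in V$. Since every generating functional vanishes on $\mathbf 1$ and $\Pol(U_N^+) = \C\mathbf 1 \oplus K_1$, this reading of Definition \ref{def:drift} is unambiguous, and I would isolate this equivalence as the first step.

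For existence, I would verify that the decomposition $\phi = \Gamma_W + D_H$ furnished by Theorem \ref{th:unitaryW} is already of the required form. That $D_H$ is a drift is immediate from Lemma \ref{lem:deriv}, as $H$ is anti-hermitian. That $\Gamma_W$ is driftless with respect to $P$ is exactly the hermitianity computation recorded in \eqref{Gammaad}: it yields $\Gamma_W(u_{jk}) = \Gamma_W(u^*_{kj})$ for all $j,k$, hence $\Gamma_W(u_{jk}-u^*_{kj}) = 0$, i.e. $\Gamma_W$ annihilates $V$. As $\Gamma_W$ is moreover a Gaussian generating functional, this gives a decomposition of the desired type at no extra cost.

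The uniqueness is where the choice of $P$ does the work, but it remains short. Suppose $\phi = D + \psi$ with $D$ a drift and $\psi$ a generating functional driftless with respect to $P$, so $\psi|_V = 0$ by the first step. Every $\varepsilon$-derivation on $\Pol(U_N^+)$ is determined by its values on the generators $u_{ij}$, and hence equals $D_K$ with $K_{jk} = D(u_{jk})$; this is the content of Lemma \ref{lem:deriv}, together with the fact that unitarity forces $D(u^*_{kj}) = -D(u_{jk})$. Evaluating $\phi = D_K + \psi$ on the spanning vectors of $V$ and using $\psi|_V = 0$ gives $\phi(u_{jk}-u^*_{kj}) = D_K(u_{jk}-u^*_{kj}) = 2K_{jk}$, which is exactly formula \eqref{phitoH}. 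Thus $K = H$, so $D = D_H$ is forced and $\psi = \phi - D_H = \Gamma_W$. Note that the Gaussianity of $\psi$ is not even needed here: the drift part is pinned down by the values of $\phi$ on $V$ alone.

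I expect no serious obstacle, since the argument is largely a bookkeeping of identities already established. The one point requiring care is the first step, namely confirming that ``driftless with respect to $P$'' translates into ``vanishing on $V$'' for this particular $P$, together with the remark that the $N^2$ differences $u_{jk}-u^*_{kj}$ spanning $V$ are exactly enough to recover every entry of the matrix $K$, so that the drift is completely determined. Both are guaranteed by Lemma \ref{lem:k1k2}, which shows these elements are linearly independent and complementary to $K_2$.
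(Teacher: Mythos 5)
Your proof is correct and follows essentially the same route as the paper: existence comes from \eqref{Gammaad} showing $\Gamma_W$ vanishes on $V$, and uniqueness from the decomposition $K_1=V\oplus K_2$ of Lemma \ref{lem:k1k2}. The only difference is cosmetic: the paper dismisses uniqueness as ``clearly the only such generating functional agreeing with $\phi$ on $K_2$,'' whereas you make this explicit by pinning down the drift part via \eqref{phitoH} -- a worthwhile elaboration of the same idea, not a different argument.
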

\begin{proof}
Equation \eqref{Gammaad} shows that $\Gamma$ is driftless with respect to $P$, i.e.\ that it vanishes on $V$. And it is clearly the only such generating functional that agrees with $\phi$ on $K_2$.
\end{proof}

Before going further, let us connect our classification of Gaussian processes on $U_{N}^{+}$ to the classical unitary group $U_{N}$. Recall that the usual unitary group $U_{N}$ can be viewed as a (quantum) subgroup of $U_{N}^{+}$ via the map $q:\Pol(U_{N}^{+}) \to \Pol(U_{N})$ given by quotienting out the commutator ideal.

\begin{proposition}\label{prop:classical}
Assume that $\phi = \Gamma_{W} + D_{H}: \Pol(U_{N}^{+})\to \C$ is a Gaussian generating functional for $W\in M_{N}(\C)\otimes M_{N}(\C), H\in M_{N}(\C)$ such as in Theorem \ref{th:unitaryW}. Then, $\phi$ factors through $U_{N}$ if and only if $W = \Sigma(W)$.
\end{proposition}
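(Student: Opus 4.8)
The plan is to show that $\phi$ descends to $\Pol(U_N)$ exactly when it annihilates the commutator ideal $\mathcal{I} = \ker(q)$, and then to translate this vanishing into a condition on $W$ alone. Since $q$ is the quotient by the ideal generated by all commutators of generators, $\phi$ factors through $U_N$ if and only if $\phi$ vanishes on $\mathcal{I}$, the $*$-ideal generated by the commutators $[x,y]$ with $x,y \in \{u_{ij}, u_{kl}^{*} : 1\leqslant i,j,k,l\leqslant N\}$.

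First I would reduce the problem to generators via Lemma \ref{lem-ideal}. As $\varepsilon$ is a character, each commutator $[x,y]$ lies in $K_{2}(\Pol(U_{N}^{+}))\subseteq\ker(\varepsilon)$. Take $\mathcal{X}$ to be the generating family $\{u_{ij}, u_{kl}^{*}\}$ and $\mathcal{Y}$ the family of these commutators. For any generator $a$ and commutator $b\in K_{2}$, writing $ab = (a-\varepsilon(a)\mathbf{1})b + \varepsilon(a)b$ exhibits the first summand as an element of $K_{3}$, which $\phi$ kills by Gaussianity; hence $\phi(ab)=\varepsilon(a)\phi(b)$ and likewise $\phi(ba)=\varepsilon(a)\phi(b)$. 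Thus hypothesis (2) of Lemma \ref{lem-ideal} holds as soon as $\phi(b)=0$ for every commutator $b$, and I conclude that $\phi$ vanishes on $\mathcal{I}$ if and only if $\phi([x,y])=0$ for all generators $x,y$. It is worth noting here that the drift $D_{H}$, being an $\varepsilon$-derivation, vanishes on $K_{2}$ and hence on all these commutators; this already explains why the final condition involves $W$ but not $H$.

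Next I would compute $\phi$ on commutators. By hermitianity one has $\phi([u_{ij}^{*}, u_{kl}^{*}]) = -\overline{\phi([u_{ij}, u_{kl}])}$, while $[u_{ij}^{*}, u_{kl}] = -[u_{kl}, u_{ij}^{*}]$ is of mixed type, so it suffices to treat the two families $[u_{ij}, u_{kl}]$ and $[u_{ij}, u_{kl}^{*}]$. For any $a,b$ the $\varepsilon$-terms in \eqref{def:partial} are symmetric under exchanging the factors and cancel in the commutator, leaving $\phi([a,b]) = \partial\phi(a\otimes b) - \partial\phi(b\otimes a)$. For the first family I would invoke \eqref{phitoW} directly to obtain $\phi([u_{ij}, u_{kl}]) = W_{ik,jl} - W_{ki,lj}$; since the flip $\Sigma$ exchanges the two tensor legs, the right-hand side is precisely $(\Sigma(W) - W)_{ki,lj}$, which vanishes for all indices exactly when $W = \Sigma(W)$. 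For the mixed family I would expand $\partial\phi(u_{ij}\otimes u_{kl}^{*})$ and $\partial\phi(u_{kl}^{*}\otimes u_{ij})$ through \eqref{eq-cobound}, using $\eta(u_{ij}) = \sum_{r}(L_{r})_{ij}e_{r}$ and $\eta(u_{ij}^{*}) = -\sum_{r}(L_{r})_{ji}e_{r}$ from Lemma \ref{lem:deriv}, then rewrite the resulting scalars in terms of $W = \sum_{r}L_{r}\otimes L_{r}^{*}$; the condition that emerges is again $W = \Sigma(W)$.

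The delicate point — and the main potential obstacle — is the index bookkeeping, specifically checking that the mixed commutators $[u_{ij}, u_{kl}^{*}]$ do not impose a second, independent constraint but reproduce the same symmetry $W = \Sigma(W)$. This is where one must be scrupulous about the conventions relating the interleaved indices of \eqref{phitoW} to the straight indices of Lemma \ref{lem:Choi}. Once both families are shown to be governed by the single equation $W = \Sigma(W)$, combining this with the reduction of the first step completes the proof: $\phi = \Gamma_{W} + D_{H}$ factors through $U_{N}$ if and only if $W = \Sigma(W)$.
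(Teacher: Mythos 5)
Your proposal is correct and follows essentially the same route as the paper's proof: both reduce factorization through $U_{N}$ to the vanishing of $\phi$ on commutators of generators via Lemma \ref{lem-ideal}, and both translate that vanishing into $W=\Sigma(W)$ through \eqref{phitoW} and the cocycle formulas (your explicit check that the mixed commutators $[u_{ij},u_{kl}^{*}]$ impose the same condition rather than a new one is correct, and is what the paper compresses into its ``analogous arguments''). The only cosmetic difference is in verifying condition (2) of Lemma \ref{lem-ideal}: you note that $(a-\varepsilon(a)\mathbf{1})b\in K_{3}$ and invoke Gaussianity directly, whereas the paper uses that every $\varepsilon$-derivation is tracial, so that $\eta$ vanishes on commutators and hence $\partial\phi(a\otimes b)=\langle\eta(a^{*}),\eta(b)\rangle=0$.
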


\begin{proof}
If $\phi$ vanishes on the commutator ideal, then we must have $\phi(u_{ij}u_{kl}) - \phi(u_{kl}u_{ij}) = 0$ for all $1\leqslant i,j,k,l\leqslant N$. Using \eqref{def:partial} and \eqref{eq-cobound}, we see that the last condition is equivalent to
\begin{equation*}
\partial \phi(u_{ij} \otimes u_{kl}) = \partial \phi(u_{kl} \otimes u_{ij}),
\end{equation*}
which by \eqref{phitoW} is further equivalent to $W = \Sigma(W)$.

Assume then that $W = \Sigma(W)$ and $H$ are as in Theorem \ref{th:unitaryW}. We want to prove that $\phi$ factors via $U_{N}$ (i.e.\ it vanishes on the commutator ideal of $\Pol(U_{N}^{+})$). We will first show that for any $1\leqslant i,j,k,l,m,n \leqslant N$, we have
\begin{equation}\label{traceideal1}
\phi\left(u_{mn}(u_{ij}u_{kl} - u_{kl}u_{ij})\right) = 0  = \phi\left(u_{mn}^{*} (u_{ij}u_{kl} - u_{kl}u_{ij})\right).
\end{equation} 
Again, as by the argument in the first part of the proof we have $\phi(u_{ij}u_{kl}) = \phi(u_{kl}u_{ij})$, the displayed formul\ae{} are equivalent to 
\[ \partial \phi(u_{mn} \otimes (u_{ij}u_{kl} - u_{kl}u_{ij})) =0  = \partial \phi(u_{mn}^* \otimes (u_{ij}u_{kl} - u_{kl}u_{ij})). \] 
But now, using \eqref{eq-cobound} and noting that every $\varepsilon$-derivation is tracial we see that the above holds. Analogous arguments show that 
\begin{align}\label{traceideal2}
\phi\left(u_{mn}(u_{ij}^{*}u_{kl} - u_{kl}u_{ij}^{*})\right) & = 0  = \phi\left(u_{mn}^{*}(u_{ij}^{*}u_{kl} - u_{kl}u_{ij}^{*})\right) \\ \label{traceideal3}
\phi\left((u_{ij}^{*}u_{kl} - u_{kl}u_{ij}^{*}) u_{mn}\right) & = 0  = \phi\left((u_{ij}^{*}u_{kl} - u_{kl}u_{ij}^{*}) u_{mn}\right) \\ \label{traceideal4}
\phi\left((u_{ij}^{*}u_{kl} - u_{kl}u_{ij}^{*}) u_{mn}^{*}\right) & = 0  = \phi\left((u_{ij}^{*}u_{kl} - u_{kl}u_{ij}^{*}) u_{mn}\right)
\end{align} 
In view of the equalities \eqref{traceideal1}-\eqref{traceideal4}, Lemma \ref{lem-ideal} applied to $\mathcal{X}=\{u_{ij}, u_{ij}^{*} \mid 1\leqslant i,j \leqslant N\}$ and $\mathcal{Y} = 
\{u_{ij} u_{kl} - u_{kl} u_{ij},u_{ij} u_{kl}^{*} - u_{kl}^{*} u_{ij} \mid 1\leqslant i,j,k,l\leqslant N\}$ ends the proof.
\end{proof}

\begin{remark}
The family of all free unitary quantum groups includes many examples which are not of so-called \emph{Kac type}. However, it was proved in \cite{FFS23} that Gaussian pairs on a compact quantum group always factor through the maximal Kac type quantum subgroup. The relevant `Kac quotients' for free unitary quantum groups  were computed in \cite{soltan05}, and shown to involve free products of copies of $U_{N}^{+}$ for various values of $N$. However we do not have a general formula for Gaussian pairs on free products of augmented algebras, and computing all Gaussian generating functionals on such free products seems difficult with the tools available at the time of this writing.
\end{remark}

Before turning to the orthogonal case, we will use our results to classify Gaussian pairs on another family of free easy quantum groups, namely the infinite quantum hyperoctahedral groups $H_{N}^{\infty +}$. This was introduced in \cite{BV09} but all that we need to know is that, as proven in \cite[Prop 5.6]{FFS23}, its Gaussian part is the same as the Gaussian part of the free group on $N$ generators $\mathbb{F}_{N}$. This means that any Gaussian pair on $\Pol(H_{N}^{\infty +})$ factors through $\C[\mathbb{F}_{N}]$ so that we only have to describe Gaussian pairs on free groups. It turns out that this is easy using Theorem \ref{thm-gauss-UNplus}, but we first need to properly define the connection between $U_{N}^{+}$ and the free group $\mathbb{F}_{N}$.

Recall that $\C[\mathbb{F}_{N}]$ is the complex $*$-algebra of finite linear combinations of elements of $\mathbb{F}_{N}$, with product induced from the group law and involution induced by the group inverse. Setting $\Delta(g) = g\otimes g$ for any $g\in \mathbb{F}_{N}$ turns this into a Hopf $^*$-algebra associated with a compact quantum group, with counit $\varepsilon$ equal to the trivial representation, $\varepsilon(g)=1$ for any $g\in \mathbb{F}_{N}$. Moreover, if $g_{1}, \cdots, g_{N}$ are free generators of $\mathbb{F}_{N}$, then the matrix $\mathrm{diag}(g_{1}, \cdots, g_{N})$ satisfies the generating relations of $\Pol(U_{N}^{+})$, hence there is a surjective $*$-homomorphism $q:\Pol(U_{N}^{+})\to \C[\mathbb{F}_{N}]$ sending $u_{ij}$ to $\delta_{ij}g_{i}$ for all $1\leqslant i\leqslant N$.

\begin{corollary}
Let $d\in \N$, let $v_{1}, \cdots, v_{N}\in \C^{d}$ and let $\alpha_{i}\in i\R$ for all $1\leqslant i\leqslant N$. Then, there exists a unique Gaussian pair $(\phi, \eta)$ on $\C[\mathbb{F}_{N}]$ such that if $g_{1}, \cdots, g_{N}$ denote the generators of the free group,
\begin{equation*}
\eta(g_{i}) = v_{i} \quad \& \quad \phi(g_{i}) = \alpha_{i} - \frac{1}{2}\|v_{i}\|^{2}, \;\; i=1,\ldots,N.
\end{equation*} 
Moreover, all Gaussian pairs with surjective cocycle on $\C[\mathbb{F}_{N}]$ arise in that way.
\end{corollary}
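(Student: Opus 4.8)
The plan is to deduce the statement from Theorem~\ref{thm-gauss-UNplus} by transporting it along the surjective morphism $q:\Pol(U_{N}^{+})\to\C[\mathbb{F}_{N}]$, $u_{ij}\mapsto\delta_{ij}g_{i}$, just introduced. By Proposition~\ref{prop:quotient}, Gaussian pairs on $\C[\mathbb{F}_{N}]$ are in natural bijection with those Gaussian pairs on $\Pol(U_{N}^{+})$ that vanish on $\ker(q)$, so the whole question reduces to a classification problem on $U_{N}^{+}$ that Theorem~\ref{thm-gauss-UNplus} already solves. The first thing I would record is that $\ker(q)$ is the $*$-ideal generated by the off-diagonal generators $\{u_{ij}:i\neq j\}$: once these are killed, the unitarity of $U$ and $\overline{U}$ collapses to $u_{ii}u_{ii}^{*}=u_{ii}^{*}u_{ii}=\mathbf{1}$, so the quotient is the universal $*$-algebra on $N$ unitaries, namely $\C[\mathbb{F}_{N}]$, and $q$ is the induced isomorphism.

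The heart of the argument is to identify which data $(L_{1},\dots,L_{d},H)$ from Theorem~\ref{thm-gauss-UNplus} yield a Gaussian pair vanishing on $\ker(q)$; I claim this happens precisely when all $L_{r}$ and $H$ are diagonal. For the forward implication, vanishing of the cocycle on the off-diagonal generators forces $\eta(u_{ij})=\sum_{r}(L_{r})_{ij}e_{r}=0$ for $i\neq j$, hence each $L_{r}$ is diagonal; a short computation with \eqref{eqGammadef} then gives $\Gamma(u_{ij})=0$ for $i\neq j$, so vanishing of $\phi$ forces $H_{ij}=\phi(u_{ij})=0$ for $i\neq j$, i.e.\ $H$ is diagonal. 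For the converse, note that \eqref{eq-L-cond} holds automatically for diagonal matrices, since $L_{r}^{*}L_{r}=L_{r}L_{r}^{*}$, so such data do define a Gaussian pair. If all $L_{r},H$ are diagonal then $\eta$ vanishes on the generating set $\{u_{ij},u_{ij}^{*}:i\neq j\}\subset\ker(\varepsilon)$, and a double application of the cocycle identity shows $\eta$ annihilates the whole ideal they generate. To see that $\phi$ itself vanishes on $\ker(q)$ I would invoke Lemma~\ref{lem-ideal} with $\mathcal{X}=\{u_{ij},u_{ij}^{*}\}$ and $\mathcal{Y}=\{u_{ij},u_{ij}^{*}:i\neq j\}$: its hypotheses reduce, using $\phi(b)=\varepsilon(b)=0$ for $b\in\mathcal{Y}$, to $\partial\phi(a\otimes b)=\langle\eta(a^{*}),\eta(b)\rangle=0$ and $\partial\phi(b\otimes a)=\langle\eta(b^{*}),\eta(a)\rangle=0$, both of which hold because $\eta(b)=\eta(b^{*})=0$.

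With this diagonal description in hand I would read off the parametrisation. Writing $v_{i}:=\eta(u_{ii})=\sum_{r}(L_{r})_{ii}e_{r}$ and $\alpha_{i}:=H_{ii}$ (purely imaginary, as $H$ is anti-hermitian), formula \eqref{eqGammadef} gives $\Gamma(u_{ii})=-\tfrac12\sum_{r}|(L_{r})_{ii}|^{2}=-\tfrac12\|v_{i}\|^{2}$, so the descended pair on $\C[\mathbb{F}_{N}]$ satisfies $\eta(g_{i})=v_{i}$ and $\phi(g_{i})=\alpha_{i}-\tfrac12\|v_{i}\|^{2}$, as claimed. Existence for arbitrary prescribed $v_{i}\in\C^{d}$ and $\alpha_{i}\in i\R$ then follows by choosing the diagonal matrices with $(L_{r})_{ii}=\langle e_{r},v_{i}\rangle$ and $H=\mathrm{diag}(\alpha_{1},\dots,\alpha_{N})$. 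For uniqueness, any cocycle on $\C[\mathbb{F}_{N}]$ restricts to an additive map on the group $\mathbb{F}_{N}$, hence is determined by its values on the free generators, so $\eta$ is fixed by $\eta(g_{i})=v_{i}$; and if $(\phi,\eta)$ and $(\phi',\eta)$ are Gaussian pairs sharing the same $\eta$ and the same values on the $g_{i}$, then by the remark after Definition~\ref{def:GaussPair} their difference is a drift, i.e.\ an $\varepsilon$-derivation vanishing on generators, hence identically zero.

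It remains to treat the ``moreover'' clause. Given any Gaussian pair on $\C[\mathbb{F}_{N}]$ with surjective cocycle $\eta:\C[\mathbb{F}_{N}]\to D$, the cocycle identity shows $D=\mathrm{span}\{\eta(g_{i})\}$ is finite dimensional; setting $d=\dim D$, identifying $D\cong\C^{d}$ and $v_{i}=\eta(g_{i})$ recovers the first datum. For the second, hermitianity of $\phi$ together with \eqref{eq-cobound} yields $\|v_{i}\|^{2}=\partial\phi(g_{i}^{*}\otimes g_{i})=-\phi(g_{i})-\overline{\phi(g_{i})}=-2\,\mathrm{Re}\,\phi(g_{i})$, so that $\alpha_{i}:=\phi(g_{i})+\tfrac12\|v_{i}\|^{2}$ is purely imaginary and $\phi(g_{i})=\alpha_{i}-\tfrac12\|v_{i}\|^{2}$. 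I expect the main obstacle to be the bookkeeping of the second paragraph---pinning down $\ker(q)$ and verifying through Lemma~\ref{lem-ideal} that exactly the diagonal data descend---while the existence, uniqueness and surjectivity steps are essentially forced by the cocycle and coboundary identities.
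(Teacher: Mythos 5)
Your proposal is correct and follows essentially the same route as the paper: existence comes from Theorem \ref{thm-gauss-UNplus} applied to diagonal matrices $L_{r}$ and $H$ (for which \eqref{eq-L-cond} is automatic), pushed down to $\C[\mathbb{F}_{N}]$ via Proposition \ref{prop:quotient} and Lemma \ref{lem-ideal} with exactly the sets $\mathcal{X}$ and $\mathcal{Y}$ the paper uses, and the converse likewise rests on the observation that vanishing of the pair on $\ker(q)$ forces all $L_{r}$ and $H$ to be diagonal. The only variations are refinements the paper leaves implicit or routes through $U_{N}^{+}$: you explicitly identify $\ker(q)$ as the ideal generated by the off-diagonal generators, and you settle the uniqueness and the ``moreover'' clause by a short direct argument on $\C[\mathbb{F}_{N}]$ itself (additivity of cocycles on group elements, plus hermitianity and \eqref{eq-cobound} forcing $\mathrm{Re}\,\phi(g_{i})=-\tfrac{1}{2}\|v_{i}\|^{2}$), which is a sound and slightly more self-contained way to close that part.
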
 

\begin{proof}
Fix an orthonormal basis $\{e_1,\ldots, e_d\} \in \C^d$.
Let, for $1\leqslant i\leqslant d$, $L_{i}\in M_{N}(\C)$ be the diagonal matrix with coefficients given by $\langle e_r, v_i\rangle$, $1\leqslant r \leqslant d$, and let $H\in M_{N}(\C)$ be the diagonal matrix with coefficients $\alpha_{i}$. Then, $H$ is anti-hermitian and because all the matrices are diagonal, condition \eqref{eq-L-cond} is satisfied. Therefore, we have a Gaussian generating functional $\phi:= \Gamma_W + D_H$ on $\Pol(U_{N}^{+})$. Applying Lemma \ref{lem-ideal} with the sets 
$\mathcal{X}=\{u_{ij}, u_{ij}^{*} \mid 1\leqslant i,j \leqslant N\}$ and $\mathcal{Y} = 
\{u_{ij}, u_{ij}^{*} \mid 1\leqslant i,j \leqslant N, i \neq j\}$ shows after an easy computation that this functional vanishes on the kernel of the homomorphism $q$, hence by Proposition \ref{prop:quotient} yields a Gaussian generating functional on 
$\C[\mathbb{F}_{N}]$.

We now have to prove that any Gaussian pair $(\phi, \eta)$ on $\Pol(U_{N}^{+})$ which vanishes on the ideal generated by the set $\mathcal{Y}$ above is of the form in the statement. To do this, write $\phi=\Gamma_W+ D_H$ and first observe that because $\eta(u_{ij}) = 0$ for $i\neq j$, the corresponding matrix $L_{r}$ must be diagonal for all $1\leqslant r\leqslant d$. We now simply set
\begin{equation*}
v_{i} = \sum_{r=1}^{d}L_{ii}e_{r}, \;\;\; 1\leqslant N,
\end{equation*}
to get $\eta(g_{i}) = v_{i}$. Moreover, this implies that for all $1\leqslant i, j\leqslant N$,
\begin{equation*}
\Gamma_{W}(u_{ij}) = -\frac{\delta_{ij}}{2}\|v_{i}\|^{2},
\end{equation*}
so that the condition $\phi(u_{ij}) = 0$ yields $H_{ij} = 0$ for $i\neq j$. In other words, $H$ is also diagonal and because it is anti-hermitian, its entries are pure imaginary numbers. Denoting for $1\leqslant i \leqslant N$  by $\alpha_{i}$ its $i$-th coefficient, we get the second formula in the statement, and the proof is complete.
\end{proof}

\section{Free orthogonal quantum groups}\label{Sec:orthogonal}

We will now classify Gaussian processes on another family of compact quantum groups, namely the free orthogonal ones. However, this time there are two families of free orthogonal quantum groups of Kac type, which have to be dealt with separately. Nevertheless, the general strategy is the same and we therefore first gather some general tools.

Again let us fix $N \in \N$ and assume that $\QG$ is a compact quantum subgroup of $U_{N}^{+}$, so that there is a surjective Hopf $*$-homomorphism $q: \Pol(U_{N}^{+})\to \Pol(\QG)$. By Proposition \ref{prop:quotient}, to characterize Gaussian pairs on $\QG$ we need to describe these Gaussian pairs on $U_{N}^{+}$ which vanish on $\ker(q)$; we will naturally exploit Lemma \ref{lem-ideal}.

\subsection{The standard case}

As already mentioned, there exist two types of free orthogonal quantum groups of Kac type. We start with the simplest and most studied one, that we therefore term ``standard''. The Hopf $*$-algebra $\Pol(O_{N}^{+})$ is the quotient of $\Pol(U_{N}^{+})$ by the relations $u_{ij}^{*} = u_{ij}$ for $1\leqslant i, j\leqslant N$, so that $\Pol(O_{N}^{+})$ is the universal $*$-algebra generated by $N^{2}$ self-adjoint elements $(u_{ij})_{1\leqslant i, j\leqslant N}$ such that the matrix $U = (u_{ij})_{1\leqslant i,j\leqslant N}$ is unitary. To describe Gaussian pairs on $\Pol(O_{N}^{+})$ we will thus use the results of the last section, Proposition \ref{prop:quotient}, and Lemma \ref{lem-ideal}. 

\begin{theorem}\label{thm-gauss-ONplus}
Let $d\in\N$, and  let $L_{1}, \cdots, L_{d}\in M_{N}(\C)$ be anti-symmetric matrices such that
\begin{equation}\label{eq-cond-real}
\sum_{r=1}^{d} \overline{L_{r}}L_{r} \in M_{N}(\R).
\end{equation}
Further, let $H\in M_{N}(\R)$ be anti-symmetric and set
\begin{equation*}
W = \sum_{r=1}^{d} L_{r}\otimes L_{r}^{*} \in M_{N}(\C)\otimes M_{N}(\C).
\end{equation*}
Denote by $(e_{1}, \cdots, e_{d})$ the usual orthonormal basis of $\C^{d}$. Then, there exists a Gaussian cocycle $\eta : \Pol(O_{N}^{+}) \to \C^{d}$ and a hermitian functional $\Gamma_{W} : \Pol(O_{N}^{+}) \to \C$ such that for every $1\leqslant i,j\leqslant N$ we have
\begin{align*}
\eta(u_{ij}) = \sum_{r=1}^{d} (L_{r})_{ij} e_{r};\\
\Gamma_W(u_{ij}) = - \frac{1}{2} \sum_{r=1}^{d} (L_{r}^{*} L_{r})_{ij},
\end{align*}
and for all $a, b\in O_{N}^{+}$,
\begin{equation*}
\partial{\Gamma_W}  (a^* \otimes b)= \langle \eta(a), \eta(b)\rangle.
\end{equation*}
The conditions above determine the pair $(\Gamma_W, \eta)$ uniquely. Set $\phi = \Gamma + D_{H}$. Then $(\phi, \eta)$ is a Gaussian pair on $\Pol(O_{N}^{+})$. Moreover, all Gaussian pairs with surjective cocycles on $\Pol(O_{N}^{+})$ (hence also all Gaussian generating functionals) arise in this way for unique $W$ and $H$.
\end{theorem}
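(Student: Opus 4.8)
The plan is to realise $O_N^+$ as a quantum subgroup of $U_N^+$ and transport the classification of Theorem \ref{thm-gauss-UNplus} across the quotient. Write $q : \Pol(U_N^+) \to \Pol(O_N^+)$ for the canonical surjection; its kernel is the $*$-ideal generated by the anti-hermitian elements $c_{ij} := u_{ij} - u_{ij}^*$, each of which lies in $\ker(\varepsilon)$. By Proposition \ref{prop:quotient}, classifying Gaussian pairs on $O_N^+$ amounts to singling out, among the Gaussian pairs $(\phi,\eta)$ on $U_N^+$ parametrised by $(L_1,\dots,L_d,H)$ as in Theorem \ref{thm-gauss-UNplus}, exactly those that vanish on $\ker(q)$.

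I would first translate the two primitive vanishing requirements on the generators $c_{ij}$. Using $\eta(u_{ij}^*) = -\sum_r (L_r)_{ji} e_r$ from Lemma \ref{lem:deriv}, one gets
\[
\eta(c_{ij}) = \sum_{r=1}^d \big((L_r)_{ij} + (L_r)_{ji}\big) e_r,
\]
so $\eta(c_{ij}) = 0$ for all $i,j$ is equivalent to each $L_r$ being anti-symmetric. Granting anti-symmetry, a short computation gives $L_r^* L_r = -\overline{L_r}\, L_r$, whence $\Gamma_W(u_{ij}) = \tfrac12 \sum_r (\overline{L_r}\, L_r)_{ij}$ and, crucially, condition \eqref{eq-L-cond} (which is forced by Theorem \ref{thm-gauss-UNplus}) becomes exactly the reality condition \eqref{eq-cond-real}; in particular each $\Gamma_W(u_{ij})$ is then real. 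Combining this with $\phi(c_{ij}) = \Gamma_W(u_{ij}) - \Gamma_W(u_{ij}^*) + H_{ij} + H_{ji}$ and the hermitianity of $\Gamma_W$, the requirement $\phi(c_{ij}) = 0$ collapses to $H_{ij} + H_{ji} = 0$; together with $H = -H^*$ this says precisely that $H \in M_N(\R)$ is anti-symmetric.

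The heart of the argument, and the step I expect to require the most care, is upgrading these two conditions on generators to genuine vanishing on the whole ideal $\ker(q)$. For the cocycle this is automatic: the $\varepsilon$-derivation property together with $\varepsilon(c_{ij}) = 0$ forces $\eta$ to vanish on any product $a\, c_{ij}\, b$, hence on all of $\ker(q)$, as soon as $\eta(c_{ij}) = 0$. For $\phi$ I would invoke Lemma \ref{lem-ideal} with $\mathcal{X} = \{u_{ij}, u_{ij}^*\}$ and $\mathcal{Y} = \{c_{ij}\}$. The point is that, once $\phi(c_{ij}) = 0$, each mixed product appearing there reduces via \eqref{def:partial} to a coboundary term, e.g.\ $\phi(u_{kl}\, c_{ij}) = \partial\phi(u_{kl} \otimes c_{ij}) = \langle \eta(u_{kl}^*), \eta(c_{ij})\rangle$ by \eqref{eq-cobound}, and all such terms vanish because $\eta(c_{ij}) = 0$ (using also $c_{ij}^* = -c_{ij}$, hence $\eta(c_{ij}^*)=0$, for the products with $c_{ij}$ on the left). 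Lemma \ref{lem-ideal} then yields $\phi|_{\ker(q)} = 0$, so $(\phi, \eta)$ descends to $O_N^+$.

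This establishes the forward direction: anti-symmetric $L_r$ satisfying \eqref{eq-cond-real} and real anti-symmetric $H$ produce, via the equivalence \eqref{eq-L-cond}$\Leftrightarrow$\eqref{eq-cond-real} and Theorem \ref{thm-gauss-UNplus}, a Gaussian pair that factors through $O_N^+$ with the asserted values of $\eta$ and $\Gamma_W$. Conversely, any Gaussian pair on $O_N^+$ with surjective cocycle lifts through $q$ (Proposition \ref{prop:quotient}) to one on $U_N^+$, which by Theorem \ref{thm-gauss-UNplus} is given by some $(L_r, H)$; vanishing on $\ker(q)$ forces, by the equivalences just recorded, the $L_r$ to be anti-symmetric, \eqref{eq-cond-real} to hold, and $H$ to be real anti-symmetric. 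Finally, uniqueness of $W = \sum_r L_r \otimes L_r^*$ and $H$ is inherited from Theorem \ref{th:unitaryW}: the descended functional determines its lift (as $q$ is surjective), and the lift determines $(W,H)$ through formulae \eqref{phitoH} and \eqref{phitoW}.
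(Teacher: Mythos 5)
Your proposal is correct and follows essentially the same route as the paper: reduce via Proposition \ref{prop:quotient} to identifying which Gaussian pairs on $\Pol(U_N^+)$ from Theorem \ref{thm-gauss-UNplus} kill $\ker(q)$, note that under anti-symmetry of the $L_r$ condition \eqref{eq-L-cond} turns into \eqref{eq-cond-real}, and upgrade the generator-level vanishing to the whole ideal via Lemma \ref{lem-ideal} with exactly the sets $\mathcal{X}=\{u_{ij},u_{ij}^*\}$ and $\mathcal{Y}=\{u_{ij}-u_{ij}^*\}$. The only (harmless) difference is organizational: you extract anti-symmetry of the $L_r$ and of $H$ directly from $\eta(c_{ij})=0$ and $\phi(c_{ij})=0$, whereas the paper reads these constraints off matrix-form identities for $\phi^{(N)}(U)$ and a degree-two coboundary computation using surjectivity of the cocycle.
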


\begin{proof}
Observe first that if matrices $L_{1}, \cdots, L_{d} \in M_{N}(\C)$ are assumed to be antisymmetric, then condition \eqref{eq-cond-real} implies condition \eqref{eq-L-cond}. Indeed, we then have 
\begin{align*}
\sum_{r=1}^{d} L_{r}^{*}L_{r} & = \left(\sum_{r=1}^{d} L_{r}^{*} L_{r}\right)^{*} = \left(-\sum_{r=1}^{d} \overline{L}_{r} L_{r} \right)^{*} = \left(-\sum_{r=1}^{d} \overline{L}_{r} L_{r}\right)^{t} = - \sum_{r=1}^{d}  L_{r}^{t}\overline{L}_{r}^{t} \\
& = -\sum_{r=1}^{d}  L_{r}\overline{L}_{r} = \sum_{r=1}^{d} L_{r} L_{r}^{*}.
\end{align*}
	 
The remarks in the beginning of this subsection together with Theorems \ref{thm-gauss-UNplus} and \ref{th:unitaryW} imply that to prove the theorem it suffices to show that given $d\in \N$ and matrices $L_{1}, \cdots, L_{d}\in M_{N}(\C)$ and $H \in M_{N}(\R)$ satisfying the conditions in Theorem \ref{thm-gauss-UNplus}, the associated generating functional $\tilde{\phi} = \Gamma_{W} + D_{H} : \Pol(U_{N}^{+})\to \C$ factors via the ideal generated by the relations $u_{ij} = u_{ij}^{*}$ for $1\leqslant i,j\leqslant N$ if and only the matrices $L_{1}, \cdots, L_{d}$ and $H$ satisfy the conditions in the statement of the theorem. Note that in the last sentence, and everywhere below in the proof, we view $u_{ij}$ as elements of $\Pol(U_{N}^{+})$. 
	
Let us start by noticing that by conditions \eqref{Gammaad} and \eqref{phitoH}, written in matrix form, we have for $1\leqslant i, j\leqslant N$ that $\tilde{\phi}(u_{ij}) = \tilde{\phi}(u_{ij}^{*})$
if and only if 	
\begin{equation*}\label{eq-1st}
\phi^{(N)}(U) = -\frac{1}{2}M(W) + H = - \frac{1}{2} M(W)^{t} - H^{t} = \phi^{(N)}(\overline{U}). 
\end{equation*}
Since $M(W)$ is hermitian while $H$ is anti-hermitian, taking adjoints in the previous equality yields
\begin{equation*}
\frac{1}{2} M(W) - H = \frac{1}{2} M(W)^{t} + H^{t}
\end{equation*}
and adding the two equations shows that $M(W)$ is symmetric -- hence real-valued -- and similarly $H$ is antisymmetric, hence real-valued. Consider next the sets
\begin{align*}
\mathcal{X} & = \{u_{ij}, u^{*}_{ij} \mid 1\leqslant i, j\leqslant N\}\subset \Pol(U_{N}^{+}), \\
\mathcal{Y} & = \{u_{ij} - u^{*}_{ij} \mid 1\leqslant i, j\leqslant N\}\subset \ker(\varepsilon),  
\end{align*}
to which we will apply Lemma \ref{lem-ideal}. Let $1\leqslant i,j,k,l\leqslant N$ and look at the condition
\begin{equation*}
\tilde\phi(u_{ij}u_{kl}) = \tilde{\phi}(u_{ij} u_{kl}^{*}).
\end{equation*}	
As we already argued that $\phi(u_{ij}) =\phi(u_{ij}^{*})$, the displayed equality is equivalent to 	
\begin{equation*}
\partial \tilde{\phi}(u_{ij} \otimes u_{kl}) = \partial\tilde{\phi}(u_{ij} \otimes u_{kl}^{*}),
\end{equation*}		
hence to
\begin{equation*}
\langle \tilde\eta(u_{ij}^{*}), \tilde\eta(u_{kl})\rangle = \langle \tilde{\eta}(u_{ij}^{*}), \tilde{\eta}(u_{kl}^{*})\rangle,
\end{equation*}
and to
\begin{equation*}
- \langle \tilde{\eta}(u_{ji}), \tilde{\eta}(u_{kl})\rangle = - \langle \tilde{\eta}(u_{ji}), - \tilde{\eta}(u_{lk})\rangle,
\end{equation*}
where $\tilde{\eta} : \Pol(U_{N}^{+})\to \C^{d}$ is the associated surjective cocycle. But then we must have $\tilde{\eta}(u_{lk}) = - \tilde\eta(u_{kl})$, i.e.\ each of the matrices $L_{r}$, $1\leqslant r\leqslant d$ must be antisymmetric. An analogous argument shows that if $L_{1}, \cdots, L_{d}$ are antisymmetric, then we also have
\begin{equation*}
\tilde{\phi}(u_{ij}u_{kl}) = \tilde{\phi}(u_{ij}^{*}u_{kl})
\end{equation*}
for all $1\leqslant i,j,k,l\leqslant N$. As $\tilde{\phi}$ is hermitian, the other conditions required in Lemma \ref{lem-ideal} also hold, and the proof is complete.
\end{proof}

\begin{remark}
As in the case of $U_{N}^{+}$, we see that drifts are given by antisymmetric real matrices, which form the Lie algebra $\mathfrak{o}_{N}$ of the classical compact Lie group $O_{N}$.
\end{remark}

Before turning to the next case, let us comment on the consequences of Theorem \ref{thm-gauss-ONplus} for bistochastic quantum groups. These are free easy quantum groups denoted by $B_{N}^{+}$, $B_{N}^{+\prime}$ and $B_{N}^{+\sharp}$. We refer to \cite{Web12} for the definition of these objects and the proof that they are isomorphic to $O_{N-1}^{+}$, $O_{N-1}^{+}\times \Z_{2}$ and $O_{N-1}^{+}\ast \Z_{2}$ respectively. Using these isomorphisms, their Gaussian pairs are easily described. 

\begin{proposition}
The Gaussian pairs on $B_{N}^{+}$, $B_{N}^{+\prime}$ and $B_{N}^{+\sharp}$ are in a natural one-to-one correspondence with Gaussian pairs on $O_{N-1}^{+}$.
\end{proposition}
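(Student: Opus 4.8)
The plan is to reduce all three cases to $O_{N-1}^+$ by combining the isomorphisms of \cite{Web12} with the fact that the $\Z_2$-factors carry no Gaussian structure. I would first dispose of $B_N^+$: since $B_N^+\cong O_{N-1}^+$ as compact quantum groups, their Hopf $*$-algebras are isomorphic as augmented $*$-algebras, and Gaussian pairs are defined purely in these terms, so the correspondence here is just this isomorphism. For the remaining two I would write $\Pol(O_{N-1}^+\times\Z_2)=\Pol(O_{N-1}^+)\otimes\C[\Z_2]$ and $\Pol(O_{N-1}^+*\Z_2)=\Pol(O_{N-1}^+)*\C[\Z_2]$, and denote by $g$ the canonical generator of $\C[\Z_2]$, a self-adjoint unitary with $g^2=\mathbf 1$ and $\varepsilon(g)=1$. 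In each case there is a canonical surjective augmented $*$-homomorphism $\pi$ onto $\Pol(O_{N-1}^+)$ — namely $\mathrm{id}\otimes\varepsilon$ in the direct-product case and the free-product counit map $\mathrm{id}*\varepsilon$ in the free case — and in both $\ker\pi$ is exactly the two-sided ideal generated by $h:=g-\mathbf 1\in\ker(\varepsilon)$. By Proposition \ref{prop:quotient}, it then suffices to show that every Gaussian pair $(\phi,\eta)$ vanishes on $\ker\pi$.

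The key observation is that the cocycle relation forces $\eta$ to be trivial on the $\Z_2$-part: from $\eta(\mathbf 1)=0$ and $\eta(g^2)=2\varepsilon(g)\eta(g)=2\eta(g)$ together with $g^2=\mathbf 1$ we get $\eta(g)=0$, hence $\eta(h)=0$ (one may alternatively invoke \cite[Prop 4.10]{FFS23}, as $\C[\Z_2]$ is generated by the projection $\tfrac12(\mathbf 1+g)$). Since $\eta$ is an $\varepsilon$-derivation and $h\in\ker(\varepsilon)$, it vanishes on every element $ahb$, and therefore on the whole ideal generated by $h$, i.e.\ on $\ker\pi$. For $\phi$ itself I would apply Lemma \ref{lem-ideal} with $\mathcal X$ the generators (the $u_{ij}$ together with $g$) and $\mathcal Y=\{h\}$. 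Using $(g-\mathbf 1)^2=-2(g-\mathbf 1)$ together with $\varepsilon(h)=0$ and $\eta(h)=0$, the identity $\partial\phi(a^*\otimes b)=\langle\eta(a),\eta(b)\rangle$ makes all relevant terms collapse: $\phi(h^2)=\langle\eta(h),\eta(h)\rangle=0$ forces $\phi(h)=0$; the products $\phi(u_{ij}h)$ and $\phi(hu_{ij})$ reduce to $\langle\eta(u_{ij}^*),\eta(h)\rangle$ and $\langle\eta(h),\eta(u_{ij})\rangle$, both zero; and $gh=hg=-h$ gives $\phi(gh)=\phi(hg)=-\phi(h)=0$.

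The point requiring the most care is keeping the argument uniform across the direct-product and free-product constructions. In the free case one cannot exploit commutation of $g$ with the $u_{ij}$, so the left and right products in Lemma \ref{lem-ideal} must be checked separately, and one must correctly identify $\ker\pi$ as the ideal generated by $g-\mathbf 1$ for the unital free product. Fortunately the vanishing $\eta(h)=0$ makes every $\partial\phi$-term drop out regardless of non-commutativity, so the direct and free cases proceed identically. Having shown that both $\phi$ and $\eta$ vanish on $\ker\pi$, Proposition \ref{prop:quotient} yields the desired natural bijection between Gaussian pairs on $B_N^{+\prime}$ (respectively $B_N^{+\sharp}$) and those on $O_{N-1}^+$, completing all three cases.
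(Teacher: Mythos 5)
Your proof is correct, and its overall skeleton matches the paper's: dispose of $B_N^+$ by the isomorphism with $O_{N-1}^+$, and for the other two cases reduce to $O_{N-1}^+$ via Proposition \ref{prop:quotient} by showing that every Gaussian pair vanishes on the ideal coming from the $\Z_2$ factor. Where you genuinely differ is in the mechanism for that vanishing. You prove it by hand: $g^2=\mathbf{1}$ forces $\eta(g)=0$ through the cocycle identity, and then Lemma \ref{lem-ideal} together with $\partial\phi(a^{*}\otimes b)=\langle\eta(a),\eta(b)\rangle$ kills $\phi$ on the ideal generated by $h=g-\mathbf{1}$. The paper instead observes that the projection $p=(\mathbf{1}-\gamma)/2$ (it writes $(\mathbf{1}+\gamma)/2$, but the stated condition $\varepsilon(p)=0$ shows the intended projection is $-h/2$ in your notation) lies, by \cite[Lem 4.4]{FFS23}, in the $*$-ideal $K_\infty=\bigcap_{n\in\N}K_n$; since $K_\infty\subseteq K_2\cap K_3$, every Gaussian functional and every Gaussian cocycle automatically vanishes on the ideal $p$ generates, with no computation at all. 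In effect, all of your evaluations ($\phi(h)=\phi(u_{ij}h)=\phi(hu_{ij})=\phi(gh)=0$, etc.) are subsumed by the one-line identity $h^2=-2h$, which gives $h\in K_n$ for every $n$. What your route buys is self-containedness (no appeal to the $K_\infty$ lemma) and an explicit, uniform treatment of the tensor-product and free-product cases, including the identification of $\ker\pi$ as the ideal generated by $h$ -- a point the paper leaves implicit when asserting that the quotient by the Hopf $*$-ideal generated by $p$ is $\Pol(B_N^+)$. One marginal caveat: your parenthetical appeal to \cite[Prop 4.10]{FFS23} to obtain $\eta(h)=0$ is not quite on target, since that proposition concerns Gaussian \emph{functionals} on algebras generated by projections rather than restrictions of cocycles defined on a larger algebra; but your primary argument via $g^2=\mathbf{1}$ makes this aside unnecessary.
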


\begin{proof}
The result is trivial for $B_{N}^{+}$. For the other two quantum groups, $\QG= B_N^{+\prime}$ or $\QG=B_{N}^{+\sharp}$, denote by $\gamma$ a symmetry (i.e. a self-adjoint unitary) generating $\Z_{2}$ and by $p = (1+\gamma)/2$ the corresponding projection in $\Pol(\QG)$. By \cite[Lem 4.4]{FFS23}, the Hopf $*$-ideal
\begin{equation*}
K_{\infty} = \bigcap_{n\in \N}K_{n}
\end{equation*}
contains $p$ (observe that $\varepsilon(p) = 0$). As a consequence, any Gaussian pair factors through the quotient by the Hopf $*$-ideal generated by $p$, and that quotient is nothing but $\Pol(B_{N}^{+})$. Hence the result.
\end{proof}

\subsection{The symplectic case}

Consider the matrix
\begin{equation*}
J_{N} = \left(\begin{array}{cc}
0 & I_{N}  \\
-I_{N} & 0
\end{array}\right) \in M_{2N}(\C).
\end{equation*}
The Hopf $*$-algebra of the compact quantum group $O_{J_{N}}^{+}$ is defined by taking the quotient of $\Pol(U_{2N}^{+})$ by the $*$-ideal generated by the relations $U = J\overline{U}J^{-1}$. In block matrix form, with
\begin{equation*}
U = \left(\begin{array}{cc}
V & X \\
Y & Z
\end{array}\right),
\end{equation*}
this can be written as
\begin{equation*}
\left(\begin{array}{cc}
V & X  \\
Y & Z
\end{array}\right)
=
\left(\begin{array}{cc}
\overline{Z} & -\overline{Y} \\
-\overline{X} & \overline{V}
\end{array}\right).
\end{equation*}
(note that $J^{-1}=-J$). Even though $O_{J_{N}}^{+}$ belongs to the general family of free orthogonal quantum groups, it is very natural to call it the \emph{free symplectic quantum group}. We will again use Lemma \ref{lem-ideal} to determine the Gaussian processes on $O_{J_{N}}^{+}$, using this time the sets
\begin{align*}
\mathcal{X} & = \{u_{ij}, u^{*}_{ij} \mid 1\leqslant i, j\leqslant 2N\}\subset \Pol(U_{2N}^{+}); \\
\mathcal{Y} & = \{u^{*}_{ij} - u_{i+N, j+N}, u^{*}_{i+N, j} + u_{i,j+N} \mid 1\leqslant i, j \leqslant N\} \subset \ker(\varepsilon). 
\end{align*}

\begin{theorem}\label{thm-gauss-OJplus}	
Let $d\in\N$, let $L_{1}, \cdots, L_{d}\in M_{2N}(\C)$ be matrices such that $L_{r}^{t} = J_{N}L_{r}J_{N}$ for each $1\leqslant r\leqslant d$ and the matrix
\begin{equation*}
W = \sum_{r=1}^{d} L_{r}\otimes L_{r}^{*} \in M_{2N}(\C)\otimes M_{2N}(\C)
\end{equation*}
satisfies $JM(W)J = -M(W)^{t}$. Let moreover $H\in M_{N}(\C)$ be an anti-hermitian matrix such that $JHJ = H^{t}$. Denote by $(e_{1}, \cdots, e_{d})$ the usual orthonormal basis of $\C^{d}$. Then, there exists a Gaussian cocycle $\eta : \Pol(O_{J_{N}}^{+}) \to \C^{d}$ and a hermitian functional $\Gamma_{W} : \Pol(O_{J_{N}}^{+}) \to \C$ such that for every $1\leqslant i,j\leqslant N$
\begin{align*}
\eta(u_{ij}) = \sum_{r=1}^{d} (L_{r})_{ij} e_{r};\\
\Gamma_{W}(u_{ij}) = -\frac{1}{2} \sum_{r=1}^{d} (L_{r}^{*} L_{r})_{ij}
\end{align*}
and for all $a, b \in \Pol(O_{J_{N}}^{+})$,
\begin{equation*}
\partial{\Gamma_{W}} (a^{*}\otimes b) = \langle \eta(a), \eta(b) \rangle.
\end{equation*}
The conditions above determine the pair $(\Gamma_W, \eta)$ uniquely. Set $\phi = \Gamma_W + D_{H}$. Then $(\phi, \eta)$ is a Gaussian pair on $\Pol(O_{J_{N}}^+)$. Moreover all Gaussian pairs with surjective cocycle on $\Pol(O_{J_{N}}^+)$ (hence also all Gaussian generating functionals) arise in this way, for unique $W$ and $H$.
\end{theorem}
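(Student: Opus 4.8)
The plan is to follow the template of Theorem \ref{thm-gauss-ONplus} essentially verbatim, exploiting Proposition \ref{prop:quotient} together with Lemma \ref{lem-ideal}. Since $\Pol(O_{J_N}^{+})$ is the quotient of $\Pol(U_{2N}^{+})$ by the $*$-ideal $\ker(q)$ generated by $\mathcal{Y}$, it suffices to determine exactly which Gaussian pairs $(\tilde{\phi},\tilde{\eta})$ on $U_{2N}^{+}$ — parametrised by Theorems \ref{thm-gauss-UNplus} and \ref{th:unitaryW} through data $L_{1},\cdots,L_{d}\in M_{2N}(\C)$ satisfying \eqref{eq-L-cond} and an anti-hermitian $H$ — vanish on $\ker(q)$. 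By Lemma \ref{lem-ideal} applied to the indicated families $\mathcal{X}$ and $\mathcal{Y}$, this reduces to verifying the three families of conditions $\tilde{\phi}(b_{k})=0$, $\tilde{\phi}(a_{j}b_{k})=0$ and $\tilde{\phi}(b_{k}a_{j})=0$, where $b_{k}$ runs over $\mathcal{Y}$ and $a_{j}$ over $\mathcal{X}$.

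First I would treat the single-generator conditions $\tilde{\phi}(b_{k})=0$. Writing $\Phi:=\tilde{\phi}^{(2N)}(U)=-\tfrac12 M(W)+H$ as in the remarks following Theorem \ref{th:unitaryW}, hermitianity gives $\tilde{\phi}^{(2N)}(\overline{U})=\overline{\Phi}$, so the relations $u_{ij}^{*}=u_{i+N,j+N}$ and $u_{i+N,j}^{*}=-u_{i,j+N}$ translate precisely into the single matrix identity $\Phi=J\overline{\Phi}J^{-1}$. Substituting $\Phi=-\tfrac12 M(W)+H$, using that $M(W)$ is hermitian while $H$ is anti-hermitian, and that conjugation by $J$ preserves the hermitian / anti-hermitian splitting (recall $J^{*}=J^{t}=-J$ and $J^{2}=-I$), I would compare the hermitian and anti-hermitian parts separately. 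This yields exactly the conditions $JM(W)J=-M(W)^{t}$ and $JHJ=H^{t}$ of the statement.

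Next I would handle the product conditions. Because each $b_{k}$ and each $b_{k}^{*}$ lies in $\ker(\varepsilon)$ and $\tilde{\phi}(b_{k})=0$, the coboundary formula \eqref{eq-cobound} reduces $\tilde{\phi}(a_{j}b_{k})$ and $\tilde{\phi}(b_{k}a_{j})$ to the inner products $\langle\tilde{\eta}(a_{j}^{*}),\tilde{\eta}(b_{k})\rangle$ and $\langle\tilde{\eta}(b_{k}^{*}),\tilde{\eta}(a_{j})\rangle$. As the vectors $\tilde{\eta}(u_{ij})$ span the carrier space, these all vanish for every $a_{j}\in\mathcal{X}$ if and only if $\tilde{\eta}(b_{k})=0=\tilde{\eta}(b_{k}^{*})$ for all $b_{k}$. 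Expanding these using $\tilde{\eta}(u_{ij})=\sum_{r}(L_{r})_{ij}e_{r}$ and $\tilde{\eta}(u_{ij}^{*})=-\sum_{r}(L_{r})_{ji}e_{r}$ (Lemma \ref{lem:deriv}) and reading off the resulting identities in the natural $N\times N$ block decomposition of each $L_{r}$, I expect the conditions coming from $\mathcal{Y}$ and those coming from the adjoints $\mathcal{Y}^{*}$ to combine into the single symplectic condition $L_{r}^{t}=JL_{r}J$ for every $r$.

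Finally, to legitimately invoke Theorem \ref{thm-gauss-UNplus} I must check that this data produces a well-defined $\Gamma_{W}$, i.e. that \eqref{eq-L-cond} holds. Here I would argue, exactly as in the opening computation of the proof of Theorem \ref{thm-gauss-ONplus}, that $L_{r}^{t}=JL_{r}J$ forces $M(W)=M(\Sigma(W))$; a short manipulation shows that under $L_{r}^{t}=JL_{r}J$ the identity $JM(W)J=-M(W)^{t}$ derived above is in fact equivalent to \eqref{eq-L-cond}, so the two structural hypotheses on $W$ are consistent and sufficient. The forward and converse directions then run as in Theorem \ref{thm-gauss-ONplus}: given data satisfying the stated conditions the resulting $\phi=\Gamma_{W}+D_{H}$ factors through $O_{J_N}^{+}$, and conversely any Gaussian pair on $O_{J_N}^{+}$ lifts to one on $U_{2N}^{+}$ whose $(W,H)$ must satisfy all three conditions, with uniqueness of $(W,H)$ inherited from Theorem \ref{th:unitaryW}. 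The main obstacle I anticipate is purely organisational: keeping the $2N\times 2N$ index bookkeeping straight through the several $J$-conjugations and across the generator/adjoint dichotomy, so that the partial block relations genuinely recombine into the clean symplectic conditions $L_{r}^{t}=JL_{r}J$, $JM(W)J=-M(W)^{t}$ and $JHJ=H^{t}$ rather than into something strictly weaker.
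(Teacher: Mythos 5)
Your proposal is correct and follows essentially the same route as the paper's own (sketched) proof: lift to $\Pol(U_{2N}^{+})$, apply Lemma \ref{lem-ideal} with the same sets $\mathcal{X}$ and $\mathcal{Y}$, extract $JM(W)J=-M(W)^{t}$ and $JHJ=H^{t}$ from the degree-one relations by separating hermitian and anti-hermitian parts (the paper's ``take adjoints, then add and subtract''), and reduce the product relations via the coboundary identity to vanishing of the cocycle on $\mathcal{Y}\cup\mathcal{Y}^{*}$, which gives $L_{r}^{t}=J L_{r}J$ for each $r$. Your key consistency check --- that under $L_{r}^{t}=JL_{r}J$ the condition $JM(W)J=-M(W)^{t}$ is equivalent to \eqref{eq-L-cond}, so Theorem \ref{thm-gauss-UNplus} applies --- is exactly the paper's opening computation (note only that, as your second clause correctly states, the symplectic condition on the $L_{r}$ alone does not force \eqref{eq-L-cond}; both hypotheses are needed).
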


\begin{proof}
As the logic of the proof is identical to that of Theorem \ref{thm-gauss-ONplus} we will just sketch the arguments.	We need to view first $\tilde{\phi} = \Gamma_{W} + D_{H}$ as a Gaussian generating functional on $\Pol(U_{2N}^+)$. To do this, observe that
\begin{align*}
\sum_{r=1}^{d}L_{r}^{*}L_{r} & = M(W)= -JM(W)^{t}J = -\sum_{r=1}^{d} JL_{r}^{t}\overline{L_{r}}J = \sum_{r=1}^{d}JL_{r}^{t}J^{2}\overline{L_{r}}J  \\
& = \sum_{r=1}^{d}L_{r}L_{r}^{*}
\end{align*}
so that condition \eqref{eq-L-cond} holds.

We now have to find conditions ensuring that $\tilde{\phi}$ vanishes on suitable elements and for that purpose it is convenient to use the matrix form of some equalities. Begin by noting that if $\tilde{\phi}$ vanishes on the ideal of interest, we must have
\begin{equation*}
\tilde{\phi}^{(2N)}(U)  = \tilde{\phi}^{(2N)}(J\overline{U}J^{-1}) = - J\phi^{(2N)}(\overline{U})J.
\end{equation*}
Using again the arguments as before \eqref{eq-1st} we see that this is equivalent to
\begin{equation*}
\frac{1}{2} M(W) + H = -J\left(\frac{1}{2}M(W)^{t} - H^{t}\right)J
\end{equation*}
Taking the adjoint and using the fact that $M(W)^{*} = M(W)$, $H^{*} = -H$, and $J^{*} = -J$, we get
\begin{equation*}
\frac{1}{2} M(W) - H = -J\left(\frac{1}{2}\overline{M(W)} - \overline{H}\right)J = -J\left(M(W)^{t} + H^{t}\right)J.
\end{equation*}
Subtracting and adding the last two equalities we finally deduce that
\begin{eqnarray}\label{eq-cond-J}
M(W) & = & -JM(W)^{t}J,  \nonumber \\
H & = & JH^{t}J.
\end{eqnarray}
On the other hand one can check that the other conditions needed to apply Lemma \ref{lem-ideal} amount to the equality
\begin{equation}\label{eq-cond-W}
W = (I\otimes J) W^{\mathrm{id}\otimes t} (I\otimes J) = (J\otimes I) W^{t\otimes\mathrm{id}} (J\otimes I).
\end{equation}
For
\begin{equation*}
W = \sum_{r=1}^{d} L_{r}\otimes L_{r}^{*},
\end{equation*}
condition \eqref{eq-cond-W} implies $L_{r} = JL_{r}^{t}J$ for each $1\leqslant r\leqslant d$ (since the $L_{r}$'s are linearly independent) and this completes the proof.
\end{proof}

\begin{remark} 
The conditions on the matrix $H$ listed in the statement mean exactly  that it belongs to the Lie algebra $\mathfrak{sp}(N)$ of the compact symplectic group $Sp(N) = Sp(2N,\mathbb{C})\cap U(2N)$. Hence, once again, drifts are given by the Lie algebra of the corresponding classical group.
\end{remark}

\begin{remark}
As in the last section, there is a more general family of free orthogonal quantum groups which are not of Kac type. The maximal Kac type quantum subgroup is then a free product of copies of $O_{N}^{+}$, $O_{J_{N}}^{+}$ and $U_{N}^{+}$ (see \cite{dfs-rfd}), so that once again the missing information concerns Gaussian processes on free products.
\end{remark}

\section{Applications to central functionals}
\label{sec:central}

Following the non-commutative philosophy, states on the $*$-algebra $\Pol(\QG)$ of a compact quantum group can be thought of as probability measures on $\QG$. In this picture, convolution semigroups are analogues of continuous processes. In the case of a compact Lie group $G$, the Brownian motion yields a specific continuous process whose probabilistic properties are closely linked to the structure of $G$.

For the quantum groups concepts used below we refer for example to \cite{freslon23}.
Recall that a functional $\phi : \Pol(\QG)\to \C$ is \emph{central} if for any other functional $\psi:\Pol(\QG) \to \C$ we have $\phi\ast \psi = \psi\ast\phi$. This is equivalent to the following (see for instance \cite[Prop 6.2 and Prop 6.9]{cfk}): for any irreducible representation $\alpha$ of $\QG$ with representative $u^{\alpha}\in M_{\dim(\alpha)}(\Pol(\QG))$, there exists $c_{\alpha}\in \C$ such that for all $1\leqslant i, j\leqslant \dim(\alpha)$,
\begin{equation*}
\varphi(u^{\alpha}_{ij}) = \delta_{ij}\frac{c_{\alpha}}{\dim(\alpha)}.
\end{equation*}
As a consequence, a central functional is completely determined by its values on irreducible characters: $\varphi(\chi_{\alpha}) = c_{\alpha}$, $\alpha \in \textup{Irr}(\QG)$.

If $G$ is a classical compact group, then the state coming from integration with respect to a probability measure $\mu$ on $G$ is central if and only if $\mu$ is conjugation invariant. This is in particular the case for the Brownian motion, and conversely, Liao classified continuous processes of conjugation invariant measures in terms of Brownian motion in \cite{liao}.

The following easy observation shows how one can build central Gaussian generating functionals.

\begin{lemma}
	\label{lem:cocentral}
Suppose that $\QG$ is a compact quantum group and $\QH$ is a co-central quantum subgroup, i.e.\ that we have a surjective Hopf*-homomorphism $q:\Pol(\QG)\to \Pol(\QH)$ such that 
\[(q\otimes \mathrm{id})\circ\Delta_{\QG} = \sigma\circ(\mathrm{id}\otimes q)\circ \Delta_{\QG},\]
where $\sigma$ denotes the tensor flip.
Then any Gaussian generating functional $\phi:\Pol(\QH) \to \C$ (necessarily central) yields a central Gaussian generating functional $\widetilde{\phi} = \phi \circ q$ on $\QG$. 	
\end{lemma}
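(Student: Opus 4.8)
The plan is to verify the two required properties of $\widetilde{\phi} = \phi \circ q$ separately: first that it is a Gaussian generating functional, and second that it is central. The first property is immediate from results already established. Indeed, $q$ is a morphism of augmented algebras (a unital $*$-homomorphism intertwining the counits, since $\varepsilon_{\QH}\circ q = \varepsilon_{\QG}$ by the assumption that $\QH$ is a quantum subgroup), so Proposition \ref{prop:quotient} applies directly: $\widetilde{\phi} = \phi\circ q$ is a Gaussian generating functional on $\Pol(\QG)$.

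The substance of the statement is therefore the centrality of $\widetilde{\phi}$, and this is where the co-centrality hypothesis must be used. First I would record the parenthetical remark that $\phi$ itself is automatically central on $\QH$: the co-centrality condition
\[
(q\otimes \mathrm{id})\circ\Delta_{\QG} = \sigma\circ(\mathrm{id}\otimes q)\circ \Delta_{\QG}
\]
says that $\QH$ is abelian as a quantum group (equivalently, $\Pol(\QH)$ is cocommutative), so $\Delta_{\QH} = \sigma\circ\Delta_{\QH}$, and for a cocommutative Hopf $*$-algebra \emph{every} functional is central since $\phi\ast\psi(b) = (\phi\otimes\psi)\Delta_{\QH}(b) = (\phi\otimes\psi)\sigma\Delta_{\QH}(b) = \psi\ast\phi(b)$. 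To obtain cocommutativity of $\Pol(\QH)$ from the displayed identity, apply $q\otimes\mathrm{id}$ to the co-associativity square or simply observe that surjectivity of $q$ lets one cancel $q$: writing the identity as $\Delta_{\QH}\circ q = \sigma\circ\Delta_{\QH}\circ q$ (using that $q$ is a coalgebra morphism, $\Delta_{\QH}\circ q = (q\otimes q)\circ\Delta_{\QG}$, together with the hypothesis) and then using that $q$ is onto.

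For the centrality of $\widetilde{\phi}$ on $\QG$ itself, the cleanest route is to verify $\widetilde{\phi}\ast\psi = \psi\ast\widetilde{\phi}$ for an arbitrary $\psi\in\Pol(\QG)'$ directly from the definition of convolution. Unwinding, for $b\in\Pol(\QG)$ we compute
\[
\widetilde{\phi}\ast\psi(b) = (\widetilde{\phi}\otimes\psi)\Delta_{\QG}(b) = (\phi\otimes\psi)(q\otimes\mathrm{id})\Delta_{\QG}(b),
\]
and now the co-centrality hypothesis rewrites $(q\otimes\mathrm{id})\Delta_{\QG}$ as $\sigma\circ(\mathrm{id}\otimes q)\Delta_{\QG}$, giving
\[
\widetilde{\phi}\ast\psi(b) = (\phi\otimes\psi)\sigma(\mathrm{id}\otimes q)\Delta_{\QG}(b) = (\psi\otimes\phi)(\mathrm{id}\otimes q)\Delta_{\QG}(b) = \psi\ast\widetilde{\phi}(b).
\]
This closes the argument. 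The main (and in fact only) obstacle is the bookkeeping with the tensor flip $\sigma$ and keeping straight which leg of $\Delta_{\QG}$ the map $q$ lands on; once the co-centrality identity is substituted into the convolution formula the centrality falls out with no further work, so no genuine difficulty arises beyond correctly interpreting the hypothesis.
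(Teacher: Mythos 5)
Your proposal is correct and follows essentially the same route as the paper: the centrality argument is exactly the paper's one-line convolution computation, substituting the co-centrality identity $(q\otimes \mathrm{id})\circ\Delta_{\QG} = \sigma\circ(\mathrm{id}\otimes q)\circ \Delta_{\QG}$ and using $(\phi\otimes\psi)\circ\sigma = \psi\otimes\phi$. Your explicit appeal to Proposition \ref{prop:quotient} for the Gaussian generating functional property (and your cocommutativity remark for the parenthetical claim) only makes explicit what the paper leaves implicit, so there is no substantive difference.
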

\begin{proof}
This is an elementary computation: for any linear functional $\psi$, we have
\begin{align*}
\widetilde{\phi}\ast \psi & = (\widetilde{\phi}\otimes \psi)\circ\Delta = (\phi\otimes \psi)\circ(q\otimes \mathrm{id})\circ\Delta \\
& = (\phi\otimes \psi)\circ\sigma\circ(\mathrm{id}\otimes q)\circ\Delta = (\psi\otimes \phi)\circ(\mathrm{id}\otimes q)\circ\Delta \\
& = (\psi\otimes \widetilde{\phi})\circ\Delta = \psi\ast\widetilde{\phi}.
\end{align*}
\end{proof}

Note that cocentral quantum subgroups are neccessarily abelian (i.e.\ their coproduct is invariant under the flip). It is easy to check that if we denote the canonical generator of $\Pol(\mathbb{T})$ by $z$, then the map $u_{ij}\mapsto \delta_{ij} z$, $i,j=1,\ldots,n$, makes $\mathbb{T}$ a co-central quantum subgroup of $U_N^+$ for any $N \in \mathbb{N}$.

\subsection{Central Gaussian processes}

Our goal in this final section is to explore the relationship between Gaussianity and centrality for generating functionals on free easy quantum groups. Our first result is rather negative: there are no central Gaussian functionals on these compact quantum groups, except for the `trivial' ones which come from classical central subgroups. The proof relies on our classification results but the details differ depending on the quantum groups involved. For $O_{N}^{+}$, this means that there is no central Gaussian process, and this was first proven in \cite[Theorem 3.23]{dg12}. However, our results enable us to recover this in a very simple way, and the proof applies also for $O_{J_{N}}^{+}$.

We fix throughout the section $N \in \N$, $N\ge 2$.

\begin{proposition}
There is no non-zero central Gaussian generating functional on $O_{N}^{+}$ and $O_{J_{N}}^{+}$ for any $N\geqslant 2$.
\end{proposition}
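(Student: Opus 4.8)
The plan is to use the classification from Theorem \ref{thm-gauss-ONplus} (and its symplectic analogue Theorem \ref{thm-gauss-OJplus}) together with the explicit description of central functionals recalled at the start of this section. Recall that a central functional is determined by scalars $c_\alpha$ via $\varphi(u^\alpha_{ij}) = \delta_{ij} c_\alpha/\dim(\alpha)$; in particular, applied to the fundamental representation $U$ (which is irreducible and $N$-dimensional for $O_N^+$, resp. $2N$-dimensional for $O_{J_N}^+$), centrality forces the matrix $\phi^{(N)}(U)$ to be a scalar multiple of the identity, say $\phi(u_{ij}) = \delta_{ij}\lambda$ for some $\lambda \in \C$.

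First I would pin down the drift and diffusion data from this constraint. For $O_N^+$, Theorem \ref{thm-gauss-ONplus} gives $\phi = \Gamma_W + D_H$ with $H$ real antisymmetric and $\phi(u_{ij}) = -\tfrac12 (M(W))_{ij} + H_{ij}$. Since $H$ is antisymmetric it has zero diagonal, and centrality forcing $\phi(u_{ij}) = \delta_{ij}\lambda$ means $H_{ij} = 0$ for all $i \neq j$ as well (the off-diagonal contribution of $\Gamma_W$ must also vanish against the $\delta_{ij}$ pattern). Thus $H = 0$, and $M(W) = -2\lambda \, I_N$. So the whole functional reduces to its diffusion part with a scalar $M(W)$. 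The task then becomes showing that the remaining centrality constraints — coming from higher irreducible representations, equivalently from the condition $\phi(u^\alpha_{ij}) = \delta_{ij} c_\alpha/\dim(\alpha)$ for $\alpha$ other than the fundamental one — force $W = 0$, hence $\phi = 0$.

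The cleanest way to extract a contradiction is to test centrality on the degree-two part of the representation theory, where the diffusion data $W$ is directly visible through the coboundary formula \eqref{phitoW}, $\partial\phi(u_{ij}\otimes u_{kl}) = -W_{ki,lj}$, which survives the passage to the quotient. I would evaluate $\phi$ on products $u_{ij}u_{kl}$ and use centrality $\phi \ast \psi = \psi \ast \phi$ for a well-chosen $\psi$ (or, equivalently, the invariance $\varphi(u^\alpha_{ij}) = \delta_{ij} c_\alpha/\dim(\alpha)$ on the irreducible subrepresentations of $U \otimes U$). For $O_N^+$ the tensor square $U\otimes U$ decomposes with a one-dimensional trivial summand spanned by the invariant vector $\sum_i e_i \otimes e_i$ (coming from the unitary/orthogonality relations), and comparing the value $\phi$ assigns through the two convolution orders against this fixed vector yields a symmetry constraint on $W$. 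Combined with the conditions already imposed by Theorem \ref{thm-gauss-ONplus} (the $L_r$ antisymmetric, $M(W)$ scalar) and the positivity condition of Lemma \ref{lem:Choi}, these should force each $L_r = 0$: the antisymmetry $L_r^t = -L_r$ together with $\sum_r L_r^* L_r$ being a scalar multiple of the identity, when confronted with the centrality-induced trace/invariance identity, leaves no room for a nonzero solution when $N \geq 2$.

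The main obstacle I expect is the last step: converting the abstract centrality condition into an algebraic identity on the matrices $L_r$ that is strong enough to kill them. The constraint $\sum_r L_r^* L_r = c\, I_N$ with all $L_r$ antisymmetric is, by itself, satisfiable by nonzero matrices, so centrality on the fundamental representation alone is insufficient — one genuinely needs the information from $U\otimes U$ (or a higher character). Concretely, I would compute $\phi(\chi_2)$ where $\chi_2$ is the degree-two character and impose that the induced bilinear form respects the trivial subrepresentation, which should translate into a relation forcing $\mathrm{Tr}(L_r^t L_s) $-type quantities to vanish and ultimately $W = 0$. For the symplectic group $O_{J_N}^+$ the argument runs in parallel using Theorem \ref{thm-gauss-OJplus}: the analogous invariant vector is built from $J_N$, the constraint $L_r = J_N L_r^t J_N$ replaces antisymmetry, and the same centrality-against-the-fundamental-and-its-square strategy forces $H = 0$ and $W = 0$. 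I would present $O_N^+$ in full and remark that $O_{J_N}^+$ is identical \emph{mutatis mutandis}.
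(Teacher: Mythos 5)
Your first half is correct and coincides with the paper's opening move: centrality on the irreducible fundamental representation gives $-\tfrac{1}{2}M(W)+H=\lambda I_N$, and since $M(W)$ is symmetric while $H$ is antisymmetric (as established in the proof of Theorem \ref{thm-gauss-ONplus}), comparing this identity with its transpose yields $H=0$ and $M(W)=-2\lambda I_N$. Be aware that your parenthetical justification --- that ``the off-diagonal contribution of $\Gamma_W$ must also vanish against the $\delta_{ij}$ pattern'' --- is not automatic from the off-diagonal entries alone: the equation $-\tfrac12 M(W)_{ij}+H_{ij}=0$ for $i\neq j$ does not by itself separate the two terms; it is exactly the symmetric/antisymmetric splitting that does. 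This is a one-line repair, not the real problem.

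The genuine gap is that the decisive step, forcing $W=0$, is never actually performed: it appears only as ``should force each $L_r=0$'' and ``should translate into a relation\dots'', and you yourself flag it as the main expected obstacle. Moreover, the concrete substitutes you offer would not suffice as stated: the single scalar $\phi(\chi_2)$ cannot pin down the matrix $W$, and identities of the form $\mathrm{Tr}(L_r^tL_s)=0$ do not kill matrices. What is needed is the full matrix-valued centrality constraint on the coefficient space $\mathcal{V}$ of $U\otimes U$. The paper's mechanism is: since $U\otimes U$ has exactly two irreducible summands, the restrictions to $\mathcal{V}$ of central functionals form a space of dimension at most two, spanned by $\varepsilon|_{\mathcal{V}}$ and $h|_{\mathcal{V}}$ (linearly independent, as seen on $u_{12}u_{12}$); writing $\phi=\alpha\varepsilon+\beta h$ on $\mathcal{V}$, the normalization $\phi(1)=0$ gives $\alpha+\beta=0$ because $1\in\mathcal{V}$, and applying the multiplication map $M$ to the resulting matrix identity gives $-2M(W)=(\alpha+\beta)I_N=0$, whence $W=0$ and $\phi=0$. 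Your route via isotypic projections and Lemma \ref{lem:Choi} \emph{can} be completed, but the completion is precisely the missing proof: centrality forces $\phi^{(N^2)}(U\tilde{\otimes}U)=aI+bP_0$, where $P_0$ is the projection onto $\C\,\sum_i e_i\otimes e_i$, hence $W=c\,I_N\otimes I_N+d\,P_0$; tracelessness of the antisymmetric $L_r$ gives $(\mathrm{Tr}\otimes\mathrm{id})(W)=0$, i.e.\ $d=-cN^2$, so that $\Psi_W(Z)=c(Z-NZ^t)$, whose Choi matrix $cN(P_0-F)$ ($F$ the flip on $\C^N\otimes\C^N$) has eigenvalues of both signs when $N\geqslant 2$ unless $c=0$; Lemma \ref{lem:Choi} then forces $c=0$, hence $W=0$. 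Without one of these computations (and their analogues for $O_{J_N}^+$), your proposal establishes only the easy half of the proposition.
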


\begin{proof}
We start with $O_{N}^{+}$ and consider, according to Theorem \ref{thm-gauss-ONplus}, a central Gaussian generating functional $\phi = \Gamma_{W} + D_{H}$. Let us furthermore denote by $U=(u_{ij})_{i,j=1}^N$ the fundamental representation of $O_{N}^{+}$. Recall the discussion before Definition \ref{def:drift};
for simplicity we will simply write $\phi(U)$ and $\phi(U\tilde{\otimes}U)$ for scalar matrices denoted there $\phi^{(N)}(U)$ and $\phi^{(N^2)}(U \tilde{\otimes} U)$. 

As $U$ is irreducible, we have
\begin{equation*}
-\frac{1}{2}M(W) + H = \phi(U) = \lambda I_{N}
\end{equation*}
for some $\lambda \in \mathbb{R}$.
Taking transposes and remembering that $M(W)$ is symmetric while $H$ is anti-symmetric yields
\begin{equation*}
-\frac{1}{2}M(W) - H = \left(-\frac{1}{2}M(W) + H\right)^{t} = (\lambda I_{N})^{t} = \lambda I_{N} = -\frac{1}{2}M(W) + H,
\end{equation*}
so that $H = 0$ and $M(W) = -2\lambda I_{N}$.

Let us now consider the vector space
\begin{equation*}
\mathcal{V} := \mathrm{Span}\{u_{ij}u_{kl} \mid 1\leqslant i, j, k, l \leqslant N\}.
\end{equation*}
This is the space of coefficients of the representation $U\otimes U$ of $\QG$ and since the latter representation the sum of two irreducible ones, the space of restrictions to $\mathcal{V}$ of central linear functional is at most two-dimensional. We now claim that the restrictions of the counit $\varepsilon$ and the Haar state $h$ to $\mathcal{V}$ are linearly independent. Indeed,
\begin{equation*}
0 = \varepsilon(u_{12}u_{12}) \neq h(u_{12}u_{12}) = \frac{1}{N},
\end{equation*}
proving our claim. This implies that there exist $\alpha, \beta\in \C$ such that $\phi = \alpha\varepsilon_{\QG} + \beta h_{\QG}$. Let us use this to compute $\phi(U\tilde{\otimes}U)$ using the formula given before Proposition \ref{prop:classical}:
\begin{align*}
-\frac{1}{2}M(W)\otimes I_{N} - \frac{1}{2}I_{N}\otimes M(W) - W^{t\otimes t} & = \phi(U)\otimes I_{N} + I_{N}\otimes \phi(U) - W^{t\otimes t}  = \phi(U\tilde{\otimes}U) \\
& = \alpha\varepsilon(U\tilde{\otimes}U) + \beta h(U\tilde{\otimes}U) \\
& = \alpha I_{N}\otimes I_{N} + \beta\sum_{i=1}^{N}E_{ii}\otimes E_{ii}.
\end{align*}
To conclude, we will use two elementary facts. First, because $\phi(1) = 0$, we have $\alpha + \beta = 0$. Second, applying the linear map $M$ to the equality above yields (remember that $L_{r}$ is antisymmetric, so that $M(W^{t\otimes t}) = M(W)$)
\begin{equation*}
-2M(W) = (\alpha + \beta)I_{N} = 0.
\end{equation*}
Hence in the end $M(W) = 0$, which readily implies that $W=0$ and eventually $\phi=0$.

For $O_{J_{N}}^{+}$ the proof is similar and we only sketch it. Once again we have
\begin{equation*}
-\frac{1}{2}M(W) + H = \phi(U) = \lambda I_{2N}
\end{equation*}
for some $\lambda \in \mathbb{C}$,
and conjugating the transpose of the last equality by $J_{N}$ yields
\begin{equation*}
\lambda I_{N} = -J\left(-\frac{1}{2}M(W)^{t} + H^{t}\right)J = -\frac{1}{2}M(W) - H,
\end{equation*}
so that $H = 0$ and $M(W) = -2\lambda I_{2N}$. Decomposing the restriction of $\phi$ to the space $\mathcal{V}$ of coefficients of $U\otimes U$, we get
\begin{equation*}
M(W^{t\otimes t}) = -M(W).
\end{equation*}
Since $JL_{r}J = L_{r}^{t}$, we have
\begin{equation*}
M(W^{t\otimes t}) = \sum_{r=1}^{d}L_{r}^{t}L_{r}^{\ast t} = \sum_{r=1}^{d}JL_{r}JJL_{r}^{\ast}J = -JM(W)J = M(W)^{t}
\end{equation*}
and since we already know that $M(W) = 2\lambda I_{N}$, we conclude that $M(W) = -M(W)$. Hence $M(W) = 0$ and we finish the proof as in the first part.
\end{proof}

In the case of $U_{N}^{+}$ and $H_{N}^{\infty +}$, there are central Gaussian functionals, but they all come from classical subgroups. To prove this, we need a small linear algebra result.

\begin{lemma}\label{lem:sumcentral}
Let $L_{1}, \cdots, L_{d}, Z\in M_{N}(\C)$ be such that
\begin{equation*}
\sum_{r=1}^{d}L_{r}^{*}\otimes L_{r}\in \C.Z^{*}\otimes Z.
\end{equation*}
Then, $L_{r}\in \C.Z$ for all $1\leqslant r\leqslant d$.
\end{lemma}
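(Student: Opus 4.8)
The plan is to turn the hypothesis, which is an identity in $M_{N}(\C)\otimes M_{N}(\C)$, into a family of identities living in a single copy of $M_{N}(\C)$, and then run a Hilbert--Schmidt orthogonality argument. Write $\sum_{r=1}^{d} L_{r}^{*}\otimes L_{r} = \mu\, Z^{*}\otimes Z$ for the scalar $\mu\in\C$ supplied by the hypothesis. First I would contract the first tensor leg against an arbitrary functional $f_{A}(X)=\mathrm{Tr}(AX)$, $A\in M_{N}(\C)$. Applying $f_{A}\otimes\mathrm{id}$ to both sides yields, for every $A$,
\begin{equation*}
\sum_{r=1}^{d} \mathrm{Tr}(AL_{r}^{*})\, L_{r} = \mu\, \mathrm{Tr}(AZ^{*})\, Z \in \C Z.
\end{equation*}
Thus it suffices to prove that $\mathrm{span}\{L_{1},\dots,L_{d}\}\subseteq \C Z$, since this forces each $L_{r}\in\C Z$, which is exactly the conclusion.

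Now equip $M_{N}(\C)$ with the Hilbert--Schmidt inner product $\langle X,Y\rangle=\mathrm{Tr}(X^{*}Y)$. To obtain the desired inclusion of spans it is enough to establish the reverse inclusion of orthogonal complements, namely $(\C Z)^{\perp}\subseteq\{L_{1},\dots,L_{d}\}^{\perp}$. So I fix $w\in M_{N}(\C)$ with $w\perp Z$ and pair the displayed identity with $w$. Because $\langle w,Z\rangle=0$, the right-hand side vanishes, leaving $\sum_{r=1}^{d} \mathrm{Tr}(AL_{r}^{*})\,\langle w,L_{r}\rangle=0$ for all $A$. Noting that $\mathrm{Tr}(AL_{r}^{*})=\overline{\langle A,L_{r}\rangle}$, this reads $\sum_{r=1}^{d} \overline{\langle A,L_{r}\rangle}\,\langle w,L_{r}\rangle=0$. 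Specializing to $A=w$ collapses the left-hand side to $\sum_{r=1}^{d} |\langle w,L_{r}\rangle|^{2}=0$, whence $\langle w,L_{r}\rangle=0$ for every $r$. This says precisely that $w\perp L_{r}$ for all $r$, giving $(\C Z)^{\perp}\subseteq\{L_{r}\}^{\perp}$ and hence $\mathrm{span}\{L_{r}\}\subseteq\C Z$.

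The step I regard as the crux is the specialization $A=w$: it converts the linear condition into a sum of squares, and this is exactly what makes the argument indifferent to any linear dependence among the $L_{r}$. This is the main subtlety, since if the $L_{r}$ are linearly dependent one cannot simply read off coefficients; working at the level of spans and orthogonal complements sidesteps the issue entirely. The argument is also robust to the degenerate cases: if $Z=0$ or $\mu=0$ the right-hand side of the contracted identity is zero, the pairing argument applies verbatim with every $w$ admissible, and one concludes $L_{r}=0\in\C Z$ for all $r$.
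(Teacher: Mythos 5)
Your proof is correct, and it takes a genuinely different (and cleaner) route than the paper's. The paper argues by contradiction: it extracts a maximal subfamily of $\{L_{1},\ldots,L_{d}\}$ that together with $Z$ is linearly independent, expresses the remaining $L_{j}$ as $\alpha_{j}L_{1}+Z_{j}$ over that family, applies an abstract dual functional $f$ (chosen to vanish on $Z$ and on all basis elements except $L_{1}$) to one tensor leg, and then takes adjoints to produce a vanishing linear combination in which the coefficient of $L_{1}$ is $1+\sum_{r}|\alpha_{r}|^{2}>0$, contradicting linear independence. Your argument replaces this bookkeeping with the Hilbert--Schmidt inner product: contracting the first leg against $\mathrm{Tr}(A\,\cdot\,)$, pairing the second leg with $w\perp Z$, and specializing $A=w$ turns the hypothesis directly into $\sum_{r}|\langle w,L_{r}\rangle|^{2}=0$, so $(\C Z)^{\perp}\subseteq\{L_{1},\ldots,L_{d}\}^{\perp}$, and the double orthocomplement in finite dimensions finishes. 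Both proofs exploit the same structural feature of the hypothesis --- the left tensor factor is the adjoint of the right one, so a suitable contraction yields a sum of squared moduli --- but yours reaches the positivity directly rather than through an adjoint step inside a contradiction, needs no renumbering or case analysis, and treats the degenerate situations ($Z=0$, or arbitrary linear dependence among the $L_{r}$) uniformly, which the paper's proof handles only implicitly through its ``maximal independent family'' setup.
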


\begin{proof}
Let us consider a maximal linearly independent family of matrices in $\{L_{1}, \cdots, L_{d}\}$ which is linearly independent from $Z$. If it is empty, the result follows. Otherwise, up to renumbering we may assume (allowing $k=d$) that the matrices $L_{1}, \cdots, L_{k}, Z$ are linearly independent and that for all $k+1\leqslant j\leqslant d$, $L_{j}\in \mathrm{Lin}\{L_{1}, \cdots, L_{k}, Z\}$ so that there exist $\alpha_{j}\in \C$ and $Z_{j}\in\mathrm{Lin}\{L_{2}, \cdots, L_{k}, Z\}$ such that
\begin{equation*}
L_{j} = \alpha_{j}L_{1} + Z_{j}.
\end{equation*}
Let $f : M_{N}(\C)\to \C$ be a linear functional such that $f(L_{1}) = 1$ and $f(L_{i}) = 0 = f(Z)$ for all $1\leqslant i\leqslant k$. Then, we have
\begin{align*}
0 & = (\mathrm{id}\otimes f)(Z^{*}\otimes Z) = \sum_{r=1}^{d}L_{r}^{*}f(L_{r}) = L_1^* + \sum_{r=k+1}^{d}\alpha_{r}L_{r}^{*} = L_1^* + \sum_{r=k+1}^{d}\left(\vert\alpha_{r}\vert^{2}L_{1}^{*} + \alpha_{r}Z_{r}^{*}\right). \\
\end{align*}
Taking adjoints yields a vanishing linear combination where the coefficient of $L_{1}$ is
\begin{equation*}
1 + \sum_{r=2}^{k}\vert\alpha_{r}\vert^{2} > 0,
\end{equation*}
contradicting linear independence of $\{L_1,\ldots,L_k, Z\}$.
\end{proof}

We also need a small quantum group result.

\begin{lemma}\label{lem:diagonal}
Let $\phi=\Gamma_W+ D_H $ be a Gaussian generating functional on $U_{N}^{+}$ such that $W$ and $H$ are both multiples of the identity matrices. Then, $\phi$ factors through $\mathbb{T}$, the subgroup of $U_{N}$ consisting in scalar matrices.
\end{lemma}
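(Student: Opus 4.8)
The plan is to understand what it means for $\phi$ to factor through $\mathbb{T}$, viewed as the scalar subgroup of $U_N$. Recall from the discussion after Lemma \ref{lem:cocentral} that $\mathbb{T}$ sits inside $U_N^+$ via the Hopf $*$-homomorphism $q:\Pol(U_N^+)\to \Pol(\mathbb{T})$ sending $u_{ij}\mapsto \delta_{ij}z$, where $z$ is the canonical generator of $\Pol(\mathbb{T})$. The kernel of $q$ is the $*$-ideal generated by the elements $u_{ij}$ with $i\neq j$ together with the elements $u_{ii}-u_{jj}$ for all $i,j$. By Proposition \ref{prop:quotient}, to show that $\phi$ factors through $\mathbb{T}$ it suffices to verify that $\phi$ vanishes on $\ker(q)$, and for this I would invoke Lemma \ref{lem-ideal}.

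First I would write $W=\mu\, I_N\otimes I_N$ and $H=\nu I_N$ with $\mu\in\C$, $\nu\in i\R$ (anti-hermitian). I would choose the generating family $\mathcal{X}=\{u_{ij},u_{ij}^*\mid 1\le i,j\le N\}$ and the family $\mathcal{Y}$ consisting of the generators of $\ker(q)$ lying in $\ker(\varepsilon)$, namely $\{u_{ij}\mid i\neq j\}\cup\{u_{ij}^*\mid i\neq j\}\cup\{u_{ii}-u_{jj}\mid 1\le i,j\le N\}$. The heart of the argument is then to check condition (2) of Lemma \ref{lem-ideal}: that $\phi$ and all products $\phi(a\, y)$, $\phi(y\, a)$ with $a\in\mathcal{X}$ and $y\in\mathcal{Y}$ vanish. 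The key computational tool is formula \eqref{phitoW}, $\partial\phi(u_{ij}\otimes u_{kl})=-W_{ki,lj}$, which for $W=\mu I_N\otimes I_N$ gives $-\mu\,\delta_{ki}\delta_{lj}$, together with the corresponding formulas for the mixed terms involving $u^*$, which are governed by the inner products $\langle\eta(u_{ij}^*),\eta(u_{kl})\rangle$ and so on. Since the underlying cocycle matrices $L_r$ are, by Lemma \ref{lem:Choi}, multiples of the identity (this is where I would need $W$ scalar to conclude the $L_r$ are scalar — essentially an application of Lemma \ref{lem:sumcentral} with $Z=I_N$), all the relevant $\eta$-values are diagonal, so the $\partial\phi$ brackets are supported on diagonal index patterns and the off-diagonal relations drop out cleanly.

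Concretely I would compute $\phi(u_{ij})=(-\tfrac12 M(W)+H)_{ij}=(-\tfrac12\mu N+\nu)\delta_{ij}$, which vanishes for $i\neq j$ and is independent of $i$ on the diagonal, so $\phi(u_{ii}-u_{jj})=0$; hermitianity then handles the $u_{ij}^*$ terms. For the length-two conditions $\phi(u_{mn}y)$ and $\phi(y\,u_{mn})$ with $y\in\mathcal{Y}$, I would reduce each to a $\partial\phi$ bracket plus lower-order terms using \eqref{def:partial}, and observe that because $\eta$ takes diagonal values, every bracket $\langle\eta(\cdot),\eta(\cdot)\rangle$ with one off-diagonal entry vanishes, while the diagonal differences $u_{ii}-u_{jj}$ produce a cancellation once one checks the brackets are independent of the diagonal position.

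The main obstacle I anticipate is bookkeeping rather than conceptual: there are several families of products to verify (each generator of $\mathcal{Y}$ multiplied on the left and right by $u_{mn}$ and $u_{mn}^*$), and one must keep careful track of the index conventions — note the transposition pattern in \eqref{phitoW} where $\partial\phi(u_{ij}\otimes u_{kl})=-W_{ki,lj}$. The genuinely delicate point is confirming that the diagonal-difference relations $u_{ii}-u_{jj}$ really are annihilated, since these are not individually in the span where the off-diagonal vanishing is automatic; here the fact that $\phi(u_{ii})$ is the same for every $i$ (a direct consequence of $W$ and $H$ being scalar) is exactly what is needed, and I would make sure the corresponding second-order terms $\partial\phi((u_{ii}-u_{jj})\otimes u_{kl})$ likewise vanish because the bracket $\langle\eta(u_{ii}^*)-\eta(u_{jj}^*),\eta(u_{kl})\rangle$ is insensitive to which diagonal index is used when $\eta$ is a scalar multiple of the diagonal cocycle. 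Once all conditions in Lemma \ref{lem-ideal} are verified, the proposition follows immediately from Proposition \ref{prop:quotient}.
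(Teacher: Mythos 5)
Your proposal is correct and follows essentially the same route as the paper: the paper also combines Lemma \ref{lem-ideal} with Proposition \ref{prop:quotient}, taking $\mathcal{X}=\{u_{ij}\}$ and $\mathcal{Y}=\{u_{ii}-u_{jj},\,u_{ii}^{*}-u_{jj}^{*},\,u_{ij},\,u_{ij}^{*}\mid i\neq j\}$, and the key observation is exactly yours: under the hypothesis both $\phi$ and $\eta$ vanish on $u_{ij}$ ($i\neq j$) and on $u_{ii}-u_{jj}$, because the cocycle matrices $L_{r}$ are forced to be scalar. (The paper prefaces this with an appeal to Proposition \ref{prop:classical} to pass through the abelianization, but that step is cosmetic.)

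Two corrections, one of which matters. First, your evaluation of \eqref{phitoW} misreads the index convention: there $W_{ki,lj}$ is the matrix entry of $W$ acting on $\C^{N}\otimes\C^{N}$ with row index $(k,i)$ and column index $(l,j)$, i.e.\ the coefficient of $e_{kl}\otimes e_{ij}$, so for $W=\mu I_{N}\otimes I_{N}$ one gets $\partial\phi(u_{ij}\otimes u_{kl})=-\mu\delta_{kl}\delta_{ij}$, \emph{not} $-\mu\delta_{ki}\delta_{lj}$. This is not harmless as stated: your formula would give $\langle\eta(u_{ii}),\eta(u_{jj})\rangle=0$ for $i\neq j$, hence $\|\eta(u_{ii})-\eta(u_{jj})\|^{2}=2\mu$, and the verification of condition (2) of Lemma \ref{lem-ideal} for the diagonal differences would fail (e.g.\ $\phi\bigl(u_{11}(u_{11}-u_{22})\bigr)$ would come out nonzero). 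What saves your proof is that your actual verifications run on the other, correct, fact you establish via Lemma \ref{lem:sumcentral}: each $L_{r}$ is scalar, so $\eta(u_{ij})=\delta_{ij}v$ for a single vector $v$, whence all brackets against $\eta(u_{ij})$ ($i\neq j$) and $\eta(u_{ii})-\eta(u_{jj})$ vanish; you should therefore delete the erroneous $\delta_{ki}\delta_{lj}$ evaluation, which contradicts this. Second, a minor point: Lemma \ref{lem-ideal} gives vanishing on the two-sided ideal generated by $\mathcal{Y}$, not on the $*$-ideal, so you should either add $u_{ii}^{*}-u_{jj}^{*}$ to $\mathcal{Y}$ (as the paper does; the checks are symmetric by hermitianity) or argue that these elements already lie in the ideal you quotient by.
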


\begin{proof}
First notice that since $W = \sigma W$, $\phi$ factors trough the abelianization of $U_{N}^{+}$ by Proposition \ref{prop:classical}. The key observation now is that under the assumption in the statement, both $\phi$ and $\eta$ vanish on all elements of the form $u_{ii} - u_{jj}$ or $u_{ij}$ for $1\leqslant i\neq j\leqslant N$. As a consequence, applying Lemma \ref{lem-ideal} with
\begin{equation*}
\mathcal{X} = \{u_{ij} \mid 1\leqslant i, j\leqslant N\} \quad \& \quad \mathcal{Y} = \{u_{ii} - u_{jj}, u_{ii}^{*} - u_{jj}^{*}, u_{ij}, u_{ij}^{*} \mid 1\leqslant i\neq j\leqslant N\}
\end{equation*}
yields that $\phi$ factors through the quotient by the ideal generated by $\mathcal{Y}$, which is nothing but the subgroup of scalar matrices in the abelianization of $U_{N}^{+}$.
\end{proof}

We are now ready for the last two cases.

\begin{proposition}
The only central Gaussian functionals on $U_{N}^{+}$ and $H_{N}^{\infty +}$ are those coming from $\mathbb{T}$, viewed as a quantum subgroup. In particular they are determined by the two parameters, $\nu \in \mathbb{R}$ and $\mu \geq 0$, which correspond to the drift and the variance parameter of the underlying (classical) Brownian on $\mathbb{T}$.
\end{proposition}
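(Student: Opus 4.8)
The plan is to treat $U_N^+$ and $H_N^{\infty+}$ separately, in each case combining the classification (Theorem \ref{th:unitaryW}, or its transport to $\C[\mathbb{F}_N]$) with the character description of centrality, and then reducing to the two auxiliary results Lemma \ref{lem:sumcentral} and Lemma \ref{lem:diagonal}. For $U_N^+$ I would start from a central Gaussian functional $\phi=\Gamma_W+D_H$ and first exploit irreducibility of the fundamental representation $U$: centrality forces $\phi^{(N)}(U)=\lambda I_N$ for some $\lambda\in\C$, and feeding this into $\phi^{(N)}(U)=-\tfrac12 M(W)+H$ while separating hermitian and anti-hermitian parts (recall $M(W)^*=M(W)$, $H^*=-H$) shows that $M(W)$ and $H$ are both scalar multiples of $I_N$. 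The next step uses the feature, special to $U_N^+$, that $U\otimes U$ is again irreducible: centrality then gives $\phi(u_{ij}u_{kl})=\delta_{ij}\delta_{kl}\,c/N^2$, so that $\phi^{(N^2)}(U\tilde{\otimes}U)$ is a scalar multiple of $I_N\otimes I_N$. Inserting this and the previous step into the matrix-lifting identity $-W^{t\otimes t}=\phi^{(N^2)}(U\tilde{\otimes}U)-\phi^{(N)}(U)\otimes I_N-I_N\otimes\phi^{(N)}(U)$ shows that $W^{t\otimes t}$, hence $W$, is a scalar multiple of $I_N\otimes I_N$. With $W$ and $H$ both scalar, Lemma \ref{lem:diagonal} immediately yields that $\phi$ factors through $\mathbb{T}$.

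For $H_N^{\infty+}$ I would first invoke \cite[Prop 5.6]{FFS23} to factor any Gaussian pair through $\C[\mathbb{F}_N]$ and put it in the explicit form of the corollary on free groups, parametrised by cocycle vectors $v_1,\dots,v_N\in\C^d$ and imaginary scalars $\alpha_1,\dots,\alpha_N$, with diagonal matrices $L_r$. Centrality on the (irreducible) fundamental representation forces $\phi(u_{ii})=\alpha_i-\tfrac12\|v_i\|^2$ to be independent of $i$; separating real and imaginary parts then gives $\alpha_i\equiv i\nu$ and $\|v_i\|^2\equiv\mu$ for all $i$.

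The decisive and most delicate point is to upgrade this to the statement that the $v_i$ actually coincide, equivalently that the off-diagonal inner products $\langle v_i,v_k\rangle$ all equal the common norm $\mu$. Here I would evaluate $\phi$ on the coefficients $u_{ii}u_{kk}^*$ of $U\otimes\overline{U}$, whose coboundary computes $\langle v_i,v_k\rangle$, and use centrality together with the quantum-permutation symmetry built into $H_N^{\infty+}$ to show that $\Sigma(W)=\sum_r L_r^*\otimes L_r$ is a scalar multiple of $I_N\otimes I_N$; Lemma \ref{lem:sumcentral} applied with $Z=I_N$ then forces each $L_r$ to be scalar, so that all $v_i$ are equal and the cocycle is one-dimensional, which is exactly the configuration realised by $\mathbb{T}$. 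I expect this scalarity of $\Sigma(W)$ to be the main obstacle: the naive $S_N$-invariance of a central functional only makes $\langle v_i,v_k\rangle$ constant over $i\neq k$, and ruling out the extra ``classical'' correlations that do survive on the classical wreath product $\mathbb{T}\wr S_N$ genuinely requires the finer (quantum) representation theory of $H_N^{\infty+}$ rather than mere permutation symmetry.

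Finally, for the converse and the parametrisation I would note that $\mathbb{T}$ is a cocentral quantum subgroup of both $U_N^+$ and $H_N^{\infty+}$ via $u_{ij}\mapsto\delta_{ij}z$, so by Lemma \ref{lem:cocentral} every Gaussian functional on $\mathbb{T}$ pulls back to a central Gaussian functional; combined with the two preceding steps this identifies the central Gaussian functionals precisely as those coming from $\mathbb{T}$. Since $\mathbb{T}=U_1^+$, Theorem \ref{th:unitaryW} with $N=1$ classifies Gaussian functionals on $\mathbb{T}$ by a pair $(W,H)$ with $W=\mu\geqslant 0$ and $H=i\nu$, $\nu\in\R$, i.e.\ by the variance $\mu\geqslant 0$ and the drift $\nu\in\R$ of the associated Brownian motion on the circle, giving the stated two-parameter family.
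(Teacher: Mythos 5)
Your treatment of $U_N^+$ is correct and is essentially the paper's own proof: irreducibility of $U$ and of $U\otimes U$, the matrix-lifting identities for $\phi^{(N)}(U)$ and $\phi^{(N^2)}(U\tilde{\otimes}U)$, and Lemma \ref{lem:diagonal} are used in exactly the same way, and your converse/parametrisation via the cocentral copy of $\mathbb{T}$ (Lemma \ref{lem:cocentral}) together with Theorem \ref{th:unitaryW} at $N=1$ also matches the paper.

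The $H_N^{\infty+}$ half, however, contains a genuine gap, and it is precisely the one you flag yourself: you never prove that $\Sigma(W)$ is scalar (equivalently, that the vectors $v_i$ coincide), and the route you sketch cannot prove it. The coefficients $u_{ii}u_{kk}^*$ and $u_{ii}u_{kk}$ are not coefficients of irreducible representations of $H_N^{\infty+}$: since $u_{ik}u_{jk}=0=u_{ik}u_{jk}^*$ for $i\neq j$ (and similarly within rows), the map $T(e_j\otimes e_l)=\delta_{jl}\,e_j\otimes e_j$ is a nontrivial self-intertwiner of both $U\otimes U$ and $U\otimes\overline{U}$; by the fusion rules of \cite{BV09}, $U\otimes U=r_{(1,1)}\oplus r_{(2)}$ and $U\otimes\overline{U}$ has three irreducible summands. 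Consequently centrality, tested on \emph{all} words of length at most $2$, only forces $\|v_i\|^2=\mu$ and $\langle v_i,v_k\rangle=d$ for $i\neq k$, with $d$ a priori unrelated to $\mu$: for instance the Gaussian functional on $\C[\F_N]$ built from orthonormal $v_1,\ldots,v_N$ (so $d=0$, $\mu=1$) passes every second-order centrality test, yet does not come from $\mathbb{T}$. Hence no argument confined to second-order coefficients --- which is all your sketch invokes --- can close the gap; "quantum permutation symmetry" at this level yields nothing beyond the classical $S_N$-invariance that you correctly distrust.

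What is actually needed is third-order information. For $N\geqslant 3$, centrality on the coefficients of $U^{\otimes 3}$ suffices: $\phi(u_{i_1i_1}u_{i_2i_2}u_{i_3i_3})$ contains the term $-\langle v_{i_1},v_{i_3}\rangle=-d-(\mu-d)\delta_{i_1i_3}$, whereas $\mathrm{End}(U^{\otimes 3})$ is spanned by the maps of the non-crossing balanced partitions, whose diagonal patterns are $1$, $\delta_{i_1i_2}$, $\delta_{i_2i_3}$ and $\delta_{i_1i_2}\delta_{i_2i_3}$; the pattern $\delta_{i_1i_3}$ corresponds to a crossing partition, so it is not available, and membership of the $\phi$-matrix in $\mathrm{End}(U^{\otimes 3})$ forces $d=\mu$, i.e.\ all $v_i$ equal (the case $N=2$ needs a separate argument, as there $\delta_{i_1i_3}$ does lie in the span of the allowed patterns). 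Finally, for context: the paper's own proof disposes of $H_N^{\infty+}$ in one line by asserting that $U\otimes U$ is irreducible, an assertion contradicted by the intertwiner $T$ above; so your instinct that this case "genuinely requires the finer quantum representation theory" is well founded and in fact points at a weakness in the paper's printed argument --- but your proposal, as written, does not supply the missing step either.
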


\begin{proof}
We start with $U_{N}^{+}$ and let $\phi = \Gamma_{W} + D_H$ be central. As above, evaluating on $U$ yields $\lambda\in \C$ such that $M(W) + H = \lambda I_{N}$ and taking adjoints shows that $H = \mathrm{Im}(\lambda)I_{N}$ and $M(W) = \mathrm{Re}(\lambda)I_{N}$. As $H$ is antihermitian, we have $H= i \nu$, $\nu \in \mathbb{R}$. Moreover, evaluating on $U\otimes U$ must give an element of the center of $M_{N}(\C)\otimes M_{N}(\C)$. Hence there is $\mu\in\C$ such that
\begin{equation*}
W = \sum_{r=1}^{d}L_{r}\otimes L_{r}^{*} = \mu I_{N}\otimes I_{N}.
\end{equation*}
It is easy to see that $\mu \geq 0$.
By Lemma \ref{lem:sumcentral}, each matrix $L_{r}$ is a multiple of the identity and by linear independence we have $d = 1$ and $L_{1} = \sqrt{\mu}I_{N}$. In particular, $W = \Sigma W$ so that the functional factors through $U_{N}$ by Proposition \ref{prop:classical}. Applying then Lemma \ref{lem:diagonal}, we conclude that if it factors through the subgroup of scalar matrices in $U_{N}$, which is isomorphic to $\mathbb{T}$. 

Conversely, if $\phi$ is any Gaussian generating functional on $\mathbb{T}$, then $\phi\circ\pi$ is a central Gaussian generating functional on $U_{N}^{+}$ by Lemma \ref{lem:cocentral} and remarks after that.

Let us now consider $H_{N}^{\infty +}$. Since both its fundamental representation $U$ and $U\otimes U$ are irreducible, the same argument as for $U_{N}^{+}$ shows that $W$ and $H$ must be multiples of the identity. Therefore, $\phi$ factors through the group of scalar matrices in the classical version of $H_{N}^{\infty}$, which is once again $\mathbb{T}$.
\end{proof}

\subsection{Centralizing Gaussian processes}

Even though Gaussian processes are seldom central on free easy quantum groups, one can make them central in the following way. Assume that $\QG$ is of Kac type and let $\mathbb{E} : \Pol(\QG) \to \Pol(\QG)_{0}$ be the conditional expectation onto the $*$-subalgebra of characters (see for example \cite[Section 2]{FSW}). Then, for any generating functional $\phi$,
\begin{equation*}
\widetilde{\phi} = \phi\circ\mathbb{E}
\end{equation*}
is a central generating functional. Of course, the original Gaussianity is in general lost in the process, but still entails specific constraints on $\widetilde{\phi}$ and as we will now see, the class of generating functionals obtained in that way has remarkable properties.

In the case of free orthogonal quantum groups the computations are simpler. Indeed, $\Pol(O_{N}^{+})_{0}$ and $\Pol(O_{J_{N}}^{+})_{0}$ are generated by the elements $\chi_{U^{\otimes p}}$, $p\in \N$, and by Gaussianity the value of $\phi$ in these characters is determined by the values for $p = 1$ and $p = 2$.

We begin with a very general observation.

\begin{lemma}\label{lem-moments-phi}
Let $\QG$ be a compact quantum group and let $\phi$ be a Gaussian generating functional on $\Pol(\QG)$, and let $U = (u_{ij})_{1\leqslant i, j\leqslant N}$ be a representation of $\QG$. Setting
\begin{equation*}
\phi_{1} = \phi(\chi_{U}) = \sum_{j=1}^{N} \phi(u_{ii}) \quad \& \quad \phi_{2} = \phi(\chi_{U\otimes U}) = \sum_{i, j=1}^{N} \phi(u_{ii} u_{jj}),
\end{equation*}
we have
\begin{equation*}
\phi(\chi_{U^{\otimes p}}) = \frac{p(p-1)}{2} N^{p-2} \phi_{2} - p(p-2) N^{p-1} \phi_{1}
\end{equation*}
for all $p\in\N$.
\end{lemma}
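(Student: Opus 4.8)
The plan is to reduce the evaluation of $\phi$ on the characters $\chi_{U^{\otimes p}}$ to a single scalar recursion. First I would use that the character of a tensor product is the product of characters, so that $\chi_{U^{\otimes p}} = \chi_U^p$ as elements of $\Pol(\QG)$, where $\chi_U = \sum_{i=1}^N u_{ii}$. It therefore suffices to compute $a_p := \phi(\chi_U^p)$, with $a_1 = \phi_1$ and $a_2 = \phi_2$ by definition. I would also record two elementary facts: since $\varepsilon(u_{ii}) = 1$ for a representation, we have $\varepsilon(\chi_U) = N$ and hence $\varepsilon(\chi_U^m) = N^m$; and, directly from the definition of the coboundary,
\[
\partial\phi(\chi_U \otimes \chi_U) = \phi(\chi_U^2) - 2\varepsilon(\chi_U)\phi(\chi_U) = \phi_2 - 2N\phi_1 =: B.
\]

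The key structural input is that the coboundary $\partial\phi$ of a Gaussian functional obeys a Leibniz rule in each variable,
\[
\partial\phi(a \otimes bc) = \varepsilon(b)\,\partial\phi(a \otimes c) + \varepsilon(c)\,\partial\phi(a \otimes b),
\]
and symmetrically in the first slot. This follows from a short computation using the triple-product identity displayed after the definition of Gaussianity, or equivalently from the $\varepsilon$-derivation property of the cocycle $\eta$ combined with $\partial\phi(a^*\otimes b) = \langle\eta(a),\eta(b)\rangle$. Iterating this rule on powers of $\chi_U$ (whose $\varepsilon$-values are the real scalars $N^k$) gives, by a one-line induction on $m$,
\[
\partial\phi(\chi_U \otimes \chi_U^{m}) = m\,N^{m-1}\,B.
\]

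With these ingredients I would set up the recursion. Writing $\chi_U^p = \chi_U \cdot \chi_U^{p-1}$ and applying the defining relation $\phi(xy) = \varepsilon(x)\phi(y) + \phi(x)\varepsilon(y) + \partial\phi(x\otimes y)$ yields
\[
a_p = N\,a_{p-1} + N^{p-1}\phi_1 + (p-1)N^{p-2}B, \qquad p \geq 2,
\]
with $B = \phi_2 - 2N\phi_1$ and $a_1 = \phi_1$. It then remains to verify that the closed form in the statement solves this recursion, which I would do by induction: substituting the formula for $a_{p-1}$, the coefficient of $N^{p-2}\phi_2$ becomes $\tfrac{(p-1)(p-2)}{2} + (p-1) = \tfrac{p(p-1)}{2}$, while the coefficient of $N^{p-1}\phi_1$ becomes $-(p-1)(p-3) + (3-2p) = -p(p-2)$, matching the claim. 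The base cases $p=1,2$ are immediate from the displayed formula.

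I expect no serious obstacle: the whole argument is driven by the Leibniz rule for $\partial\phi$, after which everything reduces to solving a linear recursion. The only points requiring minor care are the bookkeeping of the two scalar coefficients in the induction, and — should one prefer deriving the Leibniz rule through the cocycle $\eta$ rather than algebraically — keeping track of the conjugate-linearity convention for $\langle\cdot,\cdot\rangle$, which is harmless here since every $\varepsilon$-value $N^k$ that appears is real.
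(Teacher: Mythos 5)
Your proof is correct, but it takes a genuinely different route from the paper's. The paper's argument is a one-step computation: it expands $\chi_{U^{\otimes p}} = \sum_{j_1,\ldots,j_p} u_{j_1 j_1}\cdots u_{j_p j_p}$ and applies the moment formula for Gaussian functionals from \cite[Prop 2.7]{FFS23}, which for elements $a_1,\ldots,a_p$ with $\varepsilon(a_k)=1$ reads $\phi(a_1\cdots a_p) = \sum_{k<l}\phi(a_k a_l) - (p-2)\sum_k \phi(a_k)$; summing over the multi-indices then produces the coefficients $\binom{p}{2}N^{p-2}$ and $p(p-2)N^{p-1}$ directly, with no recursion. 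You instead work with the single element $\chi_U$, using $\chi_{U^{\otimes p}} = \chi_U^p$, establish the Leibniz rule $\partial\phi(a\otimes bc) = \varepsilon(b)\,\partial\phi(a\otimes c) + \varepsilon(c)\,\partial\phi(a\otimes b)$ (your derivation of this from the triple-product identity is valid, and the rule is indeed equivalent to Gaussianity), and then solve the linear recursion $a_p = Na_{p-1} + N^{p-1}\phi_1 + (p-1)N^{p-2}(\phi_2 - 2N\phi_1)$ by induction; I checked the coefficient bookkeeping, the iterated coboundary formula $\partial\phi(\chi_U\otimes\chi_U^m) = mN^{m-1}B$, and both base cases, and all are correct. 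What the paper's route buys is brevity, at the price of invoking an external reference; what yours buys is self-containedness, since your only input is the triple-product identity displayed in this paper after the definition of Gaussianity, at the price of an induction. Both arguments ultimately rest on the same fact: a Gaussian functional is determined on products of counit-one elements by its first and second moments.
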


\begin{proof}
Using \cite[Prop 2.7]{FFS23}, we have
\begin{align*}
\phi(\chi_{U^{\otimes p}}) & = \sum_{j_{1}, \cdots, j_{p} = 1}^{N} \phi(u_{j_{1}j_{1}}\cdots u_{j_{p}j_{p}}) \\
& = \sum_{j_{1}, \cdots, j_{p} = 1}^{N} \sum_{1\leqslant k < l \leqslant p} \phi(u_{j_{k}j_{k}}u_{j_{l}j_{l}})
- \sum_{j_{1}, \cdots, j_{p} = 1}^{N} \sum_{1\leqslant k\leqslant p}\phi(u_{j_{k}j_{k}}) \\
& = \frac{p(p-1)}{2} N^{p-2} \phi_{2} - p(p-2) N^{p-1} \phi_{1}.
\end{align*}
\end{proof}

Applying Lemma \ref{lem-moments-phi} to $O_{N}^{+}$, we see that if $\phi = \Gamma_{W} + D_{H}$, then we have
\begin{eqnarray*}
\phi_{1} & = & \phi(\chi_{U}) = \sum_{i=1}^{N} \phi(u_{ii}) = \mathrm{Tr}\left(\frac{1}{2}M(W)\right) + \mathrm{Tr}(H) = \frac{1}{2} \mathrm{Tr}\left(M(W)\right), \\
\phi_{2} & = & \phi(\chi_{U\otimes U}) = \sum_{j,k=1}^N \phi(u_{jj} u_{kk}) \\
& = & \mathrm{Tr}\otimes\mathrm{Tr}\left(\frac{1}{2} I\otimes M(W) + \frac{1}{2} M(W)\otimes I + W 
+ H\otimes I + I\otimes H \right) \\
& = & N \mathrm{Tr}\left(M(W)\right),
\end{eqnarray*}
because, by anti-symmetry, $\mathrm{Tr}(H) = 0$ and $(\mathrm{Tr}\otimes\mathrm{Tr})(W) = 0$. Therefore, for any $p\in \N$,
\begin{equation*}
\phi(\chi_{U^{\otimes p}}) = \frac{p}{2} N^{p-1} {\rm Tr}\big(M(W)\big).
\end{equation*}

Note that this can also be written as
\begin{equation*}
\phi(\chi_{U^{\otimes p}}) = \frac{{\rm Tr}\big(M(W)\big)}{2} \left.\frac{{\rm d}}{{\rm d}x}\right|_{x=N} x^{p} = \frac{{\rm Tr}\big(M(W)\big)}{2}\phi_{B}(\chi_{U^{\otimes p}}),
\end{equation*}
where $\phi_B$ is a specific generating functional arising from any Gaussian $\phi_{W,H}$ with ${\rm Tr}\big(M(W)\big)=2$. In other words, the centralization of any Gaussian functional (which is not a drift, i.e., with $W\neq 0$) is a positive multiple of a  $\phi_{B}$. It turns out that $\phi_{B}$ is a very peculiar process in at least two other respects.
\begin{itemize}
\item First, it was shown in \cite{BGJ} that if $L : \Pol(SO_{N})\to \C$ is the infinitesimal generator of the usual Brownian motion, i.e. $L(f) = \Delta(f)(I_{N})$, where now $\Delta$ denotes the Laplacian on $SO_N$, then $\phi_{B}$ is proportional to $L\circ q\circ\mathbb{E}$, where $q:\Pol(O_N^+)\to \Pol(SO_N)$ identifies $SO_N$ as a quantum subgroup of $O_N^+$.
\item Second, Liao proved in \cite{liao} that any central generating functional on a compact semisimple Lie group decomposes as the sum of a multiple of the generator of the Brownian motion and a totally discontinuous part. In \cite[Theorem 10.2]{cfk}, the authors proved an analogue of this result: any central generating functional on $O_{N}^{+}$ decomposes as the sum of a multiple of $\phi_{B}$ and a ``jump process''.
\end{itemize}

Turning now to $O_{J_{N}}^{+}$, we have again $\mathrm{Tr}(H) = 0$, $\mathrm{Tr}(L_{r}) = 0$ and $(\mathrm{Tr}\otimes\mathrm{Tr})(W) = 0$. Furthermore,
\begin{equation*}
\phi_{1} = \phi(\chi_{U}) = \frac{1}{2}\mathrm{Tr}\left(M(W)\right) \quad \& \quad \phi_{2} = \phi(\chi_{U\otimes U}) = 2N\mathrm{Tr}\left(M(W)\right)
\end{equation*}
so that once again, for any $p\in \N$,
\begin{equation*}
\phi(\chi_{U^{\otimes p}}) = \frac{p}{2} (2N)^{p-1} \mathrm{Tr}\left(M(W)\right)
\end{equation*}
for any Gaussian generating functional $\phi = \Gamma_{W} + D_{H}$ on $\Pol(O_{J_{N}}^{+})$.

We will conclude with a brief discussion of the case of $U_{N}^{+}$. This time, there are more values to consider to completely determine the generating functional, since we have to evaluate it on all possible tensor products of $U$ and $\overline{U}$. This is easily done and yields
\begin{align*}
\phi(\chi_{U}) & = \frac{1}{2}\mathrm{Tr}\big(M(W)) + \mathrm{Tr}(H), \\
\phi(\chi_{\overline{U}}) & = \frac{1}{2}\rm{Tr}\left(M(W)\right) - \mathrm{Tr}(H), \\
\phi(\chi_{U\otimes U}) & = N\mathrm{Tr}\left(M(W)\right)  - \sum_{r=1}^{d}\left\vert\mathrm{Tr}(L_{r})\right\vert^{2} + 2N\mathrm{Tr}(H), \\
\phi(\chi_{U\otimes \overline{U}}) & = N\mathrm{Tr}\left(M(W)\right) + \sum_{r=1}^{d} \left\vert\mathrm{Tr}(L_{r})\right\vert^{2}, \\
\phi(\chi_{\overline{U}\otimes U}) & = N\mathrm{Tr}\left(M(W)\right) + \sum_{r=1}^{d} \left\vert\mathrm{Tr}(L_{r})\right\vert^{2}, \\
\phi(\chi_{\overline{U}\otimes \overline{U}}) & = N \mathrm{Tr}\left(M(W)\right) - \sum_{r=1}^{d} \left\vert\mathrm{Tr}(L_{r})\right\vert^{2} - 2N\mathrm{Tr}(H).
\end{align*}

Note that these moments depend only on the three parameters
\begin{eqnarray*}
\mathrm{Tr}(H) & \in & i\mathbb{R}, \\
\mathrm{Tr}\left(M(W)\right) & = & \sum_{r=1}^{d} \mathrm{Tr}(L_{r}^{*}L_{r}) \in \R_{+}, \\
(\mathrm{Tr}\otimes \mathrm{Tr})(W) & = & \sum_{r=1}^{d} \left\vert\mathrm{Tr}(L_{r})\right\vert^{2} \in \R_{+}.
\end{eqnarray*}

There is no classification available for central generating functionals on $U_{N}^{+}$, contrary to the case of $O_{N}^{+}$, and therefore no analogue of Liao's result \cite{liao}. Nevertheless, it was shown in \cite{Delhaye} that if $L$ is the infinitesimal generator of the Brownian motion on $U_{N}$, then the corresponding central generating functional on $U_{N}^{+}$, $L\circ q \circ \mathbb{E}$, with $q:\Pol(U_N^+)\to \Pol(U_N)$, is of the form above.


\end{document}